\documentclass[12pt]{amsart}
\usepackage[all]{xy}
\usepackage[mathscr]{euscript}
\usepackage{amsmath,amssymb,amsthm}
\usepackage{hyperref}
\hypersetup{colorlinks=true,urlcolor=blue,citecolor=blue,linkcolor=blue}
\usepackage{courier}
\usepackage{amscd,latexsym,longtable,diagram,picture}

\usepackage{ragged2e}
\usepackage[normalem]{ulem}
\usepackage{graphicx, tabularx}
\usepackage{array}
\usepackage{color}
\usepackage{enumerate}
\usepackage{nicefrac}
\usepackage{listings}
\usepackage{bm,bbm,mathrsfs}

\usepackage{epsfig,xcolor}
\usepackage[shortlabels]{enumitem}
\usepackage{indentfirst, setspace}
\usepackage{tikz-cd}

\usepackage[margin=1.0in]{geometry}
\usepackage{booktabs}
\usepackage[T1]{fontenc}
\usepackage[utf8]{inputenc}
\usepackage{lmodern}
\usepackage{microtype}
\microtypesetup{nopatch=footnote}

\usepackage{silence}
\allowdisplaybreaks
\usepackage{caption}
\usepackage{cancel}

\usetikzlibrary{calc,matrix,arrows,decorations.markings}

\newcommand{\Z}{\mathbb{Z}}
\newcommand{\Q}{\mathbb{Q}}

\newcommand{\mat}{\begin{pmatrix}}
\newcommand{\emat}{\end{pmatrix}}

\providecommand{\qfdata}[2]{}
\renewcommand{\qfdata}[2]{\ensuremath{\begin{pmatrix}#1\\#2\end{pmatrix}}}
\providecommand{\fkdata}[2]{}
\renewcommand{\fkdata}[2]{\ensuremath{\begin{pmatrix}#1\\#2\end{pmatrix}}}
\providecommand{\vct}[1]{}
\renewcommand{\vct}[1]{\ensuremath{(#1)}}

\newtheorem{theorem}{Theorem}[section]
\newtheorem{proposition}[theorem]{Proposition}

\newtheorem{lemma}[theorem]{Lemma}
\newtheorem{rmk}[theorem]{Remark}

\newtheorem*{theorem*}{Theorem}

\theoremstyle{remark}
\newtheorem*{rmk*}{\textit{Remark}}

\author{Bo-Hae Im and Minseo Shin}

\address{
Dept. of Mathematical Sciences, KAIST,
291 Daehak-ro, Yuseong-gu,
Daejeon 34141, South Korea
}
\email{bhim@kaist.ac.kr}

\address{
Dept. of Mathematical Sciences, KAIST,
291 Daehak-ro, Yuseong-gu,
Daejeon 34141, South Korea
}
\email{minseo74da@kaist.ac.kr}

\date{\today}
\subjclass[2020]{Primary 11G05; Secondary 11F37, 11F67, 11E25}
\keywords{}
\thanks{Bo-Hae Im was supported by Basic Science Research Program through the National Research Foundation of Korea(NRF) grant funded by the Korea government(MSIT)(NRF-2023R1A2C1002385, or RS-2023-NR076333).}

\begin{document}

\setlist[enumerate]{label=\textup{(\alph*)},ref=\alph*}
\theoremstyle{plain}
\newtheorem{maintheorem}{Theorem}
\renewcommand{\themaintheorem}{\Alph{maintheorem}}
\theoremstyle{remark}

\title[]
{Tunnell-type criteria for variants of the congruent number problem}

\begin{abstract}
We study the \(\theta\)-congruent number problem for \(\cos\theta=\pm3/5\) and \(\pm4/5\) using the generalized theta series construction of Sirolli--Tornar\'ia \cite{ST}. We describe its specialization to newforms of weight \(2\) over \(\mathbb Q\) with nontrivial square-free odd part of the level, and explain the reduction of quadratic twists to odd fundamental discriminants.
The same construction gives an effective procedure for every \(\theta\)-congruent number problem with nonzero rational cosine.
For the four angles, we construct explicit forms of weight \(3/2\) whose Fourier coefficients determine the central \(L\)-values of the associated elliptic curves. This gives Tunnell-type criteria for every positive square-free integer: a nonzero coefficient implies non-\(\theta\)-congruence unconditionally, and the converse holds assuming the Birch--Swinnerton-Dyer conjecture.
We also prove unconditional non-\(\theta\)-congruence for primes in explicit arithmetic progressions.
\end{abstract}

\maketitle

\section{Introduction}
A positive integer \(n\) is called a \emph{congruent number} if it occurs as the area of a right triangle with rational side lengths. The classical congruent number problem asks whether a given positive integer is a congruent number or not. This question is equivalent to determining whether the associated quadratic twist \(E^{(n)}: y^2=x^3-n^2x\) of the elliptic curve
\begin{align}\label{eq:orig-curve}
E:\quad y^2=x^3-x
\end{align}
has positive Mordell--Weil rank over \(\mathbb Q\). The curves \(E^{(n)}\) are commonly called congruent number elliptic curves; \cite{TY} provides a survey on this problem and its variants.

There are several analytic and algebraic approaches to this problem.
The main analytic approaches are naturally divided according to the sign of the functional equation of the Hasse--Weil $L$-function \(L(E^{(n)},s)\). When the sign is \(+1\), Tunnell's theorem \cite{Tu} gives a striking criterion for this problem in terms of the Fourier coefficients of certain modular forms of weight \(3/2\). 
More precisely, Tunnell constructs $q$-series whose coefficients are linear combinations of representation numbers of ternary quadratic forms, and these coefficients determine the central \(L\)-values of the quadratic twists $E^{(n)}$ of \eqref{eq:orig-curve}.
Assuming the Birch--Swinnerton-Dyer conjecture (BSD), this provides an effective criterion for deciding whether a given positive integer is congruent or not. By \cite[Theorem~3]{Tu}, we get the following:

\begin{theorem}[Tunnell]\label{thm:Tunnell}
Let $n$ be a positive odd square-free integer. Define \(r_n(a, b, c) = |\{(x, y, z) \in \Z^3 : ax^2 + by^2 + cz^2 = n\}|.\)
\begin{enumerate}[\normalfont (a)]
\item If $2r_n(2, 1, 32) - r_n(2, 1, 8) \neq 0$, then $n$ is not congruent.
\item  If $2r_n(4, 1, 32) - r_n(4, 1, 8) \neq 0$, then $2n$ is not congruent.
\end{enumerate}
If the BSD conjecture is true, then the converses of both (a) and (b) are true.
\end{theorem}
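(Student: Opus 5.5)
The plan is to follow Tunnell's original route: Waldspurger's theorem combined with Coates--Wiles. This splits the work into a modular-forms part (constructing weight \(3/2\) forms and identifying them with the Shimura lift of \(E\)) and an arithmetic part (descent from \(L\)-values to ranks).

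\textbf{Step 1: the weight \(2\) form.} Let \(F=\eta(4z)^2\eta(8z)^2\in S_2(\Gamma_0(32))\). This is the newform attached to \(E\) in \eqref{eq:orig-curve}, and it has CM by \(\mathbb Q(i)\).

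\textbf{Step 2: the weight \(3/2\) forms.} Put \(\theta(z)=\sum_{m\in\Z} q^{m^2}\) and define
\[
g(z)=\theta(z)\bigl(\theta(32z)\theta(2z)-\tfrac12\theta(8z)\theta(2z)\bigr) \in S_{3/2}(\Gamma_0(128)).
\]
Its \(n\)-th coefficient is \(\tfrac12\bigl(2r_n(2,1,32)-r_n(2,1,8)\bigr)\) up to normalization. Define the analogous \(g'\) with \(\theta(4z)\) in place of \(\theta(2z)\), living on \(\Gamma_0(128)\) with character \(\chi_2\). I then need to check that \(g\) and \(g'\) lie in Kohnen-type or Shimura-correspondence subspaces that map to \(F\) (or to \(F\otimes\chi_{2}\)). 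The approach is to compute the relevant spaces of cusp forms of weight \(3/2\) on \(\Gamma_0(128)\), show by the Serre--Stark basis theorem that theta series exhaust the Eisenstein part, and verify that the Shimura lift of \(g\) is \(F\) by comparing finitely many coefficients (using Sturm's bound).

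\textbf{Step 3: Waldspurger.} Apply Waldspurger's theorem to the Hecke eigenform \(g\). For square-free odd \(n\) in a fixed square class it gives
\[
a_g(n)^2 = c\,L(E^{(n)},1)\sqrt n
\]
with a nonzero constant \(c\) depending only on the class of \(n\) modulo the local conditions at \(2\). I would verify that \(c\neq 0\) for each residue class of \(n\) modulo \(8\), for instance by checking a single \(n\) in each class where \(L(E^{(n)},1)\ne0\), such as \(n=1,3\). Classes where the root number is \(-1\), namely \(n\equiv5,6,7\pmod 8\), are automatically consistent: both sides vanish, and one shows directly that the theta combination vanishes there. The same argument applied to \(g'\) and \(2n\) handles part (b).

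\textbf{Step 4: conclusion.} If the coefficient is nonzero, then \(L(E^{(n)},1)\ne0\), and Coates--Wiles (CM case) gives that \(E^{(n)}(\Q)\) is finite, so \(n\) is not congruent. Conversely, if the coefficient vanishes, then \(L(E^{(n)},1)=0\), and BSD gives positive rank.

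The main obstacle is Step 3: establishing that the explicit theta combination is an eigenform in the correct Shimura preimage, and making Waldspurger's proportionality explicit with nonvanishing local factors at \(2\). In particular, it is delicate to split the \(n\)-classes so that one form governs all odd square-free \(n\). I would first try to reduce this to a finite coefficient check via dimension counts on \(S_{3/2}(\Gamma_0(128))\).
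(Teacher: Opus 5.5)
The paper gives no proof of this statement; it quotes it from Tunnell \cite[Theorem~3]{Tu}. Your strategy (Shimura lift to $F$, then Waldspurger, then Coates--Wiles) is Tunnell's. However, Step~2 contains a concrete error that breaks Steps~2--3 as written: your $g$ is not a cusp form.

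Its constant term is $1-\tfrac12=\tfrac12$, and it has nonzero even-index coefficients; for example, the coefficient of $q^2$ is $2-\tfrac12\cdot 2=1$. So $g\notin S_{3/2}(\Gamma_0(128))$ and it has a nonzero Eisenstein component. It therefore cannot be a $T_{p^2}$-eigenform with eigenvalues $a_F(p)$, so neither a Sturm-bound identification of its Shimura lift with $F$ nor Waldspurger's theorem applies to it. The same is true of $g'$.

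The missing idea is the identity that isolates the cuspidal part. Write $2\theta(32z)-\theta(8z)=\sum_{y\in\mathbb{Z}}(-1)^y q^{8y^2}$ and use
\[
\eta(8z)\eta(16z)=\sum_{x>0\ \mathrm{odd}}\ \sum_{y\in\mathbb{Z}}(-1)^y q^{x^2+8y^2}.
\]
Splitting $\theta(z)$ by the parity of $x$ then gives
\[
g=\eta(8z)\eta(16z)\,\theta(2z)+\tfrac12\,\theta(2z)\theta(4z)\sum_{y\in\mathbb{Z}}(-1)^y q^{8y^2},
\]
and the second summand is supported on even exponents. Since only odd $n$ matter, replace $g$ and $g'$ by Tunnell's cusp forms $\eta(8z)\eta(16z)\theta(2z)$ and $\eta(8z)\eta(16z)\theta(4z)$. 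These have the same odd-index coefficients, and with that change Steps~3 and~4 are sound as outlined.

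Two smaller corrections. First, both of these forms lift to $F$ itself, not to $F\otimes\chi_2$, which is the level-$64$ newform of $y^2=x^3-4x$. For example, both have $T_{25}$-eigenvalue $-2=a_F(5)$, whereas $F\otimes\chi_2$ has $a_5=2$. The passage from $n$ to $2n$ comes from the character $\chi_2$ entering Waldspurger's twist, together with $E^{(-1)}\cong E$.

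Second, Serre--Stark is a statement about weight $1/2$ and does not describe the weight-$3/2$ space. To make your finite Sturm-type check legitimate, you need the Hecke decomposition of $M_{3/2}(\Gamma_0(128))$ and of its $\chi_2$ analogue. That decomposition has several pieces: Eisenstein series, cuspidal unary theta series such as $\eta(8z)^3$, and the parts lifting to $F$ and to its level-$64$ twist. You then verify that your forms lie in the $F$-part.
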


When the sign of the functional equation is \(-1\), it forces \(L(E^{(n)},1)=0\), so BSD predicts positive Mordell--Weil rank for $E^{(n)}$. 
For unconditional results, one instead seeks to produce rational points of infinite order.
In the classical congruent number problem, explicit constructions of such points go back to Heegner \cite{He}. The Gross--Zagier formula \cite{GZ} later related the N\'eron--Tate heights of Heegner points to central derivatives of \(L\)-functions, thereby connecting the non-torsion of these points with analytic rank one. Combined with Kolyvagin's Euler-system method \cite{Ko}, a non-torsion Heegner point yields algebraic rank one and the finiteness of the Tate--Shafarevich group in the corresponding analytic-rank-one setting. Such point constructions for congruent number elliptic curves were further developed by Monsky \cite{Mo}, and Tian \cite{Ti} extended these methods to families with arbitrarily many prime factors. Related criteria using genus periods and genus points were obtained by Tian--Yuan--Zhang \cite{TYZ}.

Let \(0<\theta<\pi\) be a non-right angle with rational cosine, say \(\cos\theta=s/r\), where \(r,s\in\mathbb Z\), \(r>0\), \((r,s)=1\), and \(0<|s|<r\). A positive integer \(n\) is called a \(\theta\)-congruent number if there is a triangle with rational sides and an angle \(\theta\) whose area is \(n\sqrt{r^2-s^2}\). Fujiwara \cite{Fu} associated to this problem the elliptic curve
\begin{equation}\label{eq:thetacong}
    E_{r,s}:\quad
    y^2=x^3+2sx^2-(r^2-s^2)x,
\end{equation}
now called the \(\theta\)-congruent elliptic curve, and proved that $n$ is $\theta$-congruent if and only if the quadratic twist $E_{r, s}^{(n)}$ by $n$ of $E_{r, s}$ given in \eqref{eq:thetacong} has a rational point outside its rational $2$-torsion subgroup.
Thus, as in the classical case, the problem becomes one about quadratic twists of a fixed elliptic curve.

For general rational \(\cos\theta\), Kan \cite{Kan} parametrized
\(\theta\)-congruent squareclasses and obtained several results for prime
values of \(n\). 
This parametric direction was developed further by Im--Kim \cite{IK}, who constructed families whose square-free parts contain arbitrarily many, and even prescribed, prime factors and determined the probability that a given prime occurs as such a factor.

The cases \(\theta=\pi/3\) and \(2\pi/3\) have received particular attention. Most directly related to the present work are Yoshida's analogues \cite{Y1,Y2} of Tunnell's theorem. Yoshida constructed modular forms of weight \(3/2\) whose coefficients are expressed in terms of ternary quadratic forms and whose squares are related to central \(L\)-values by Waldspurger's formula. A related Tunnell-type treatment for angles with irrational cosine was given by Dimabayao and Purkait \cite{DP}, who applied the Shimura correspondence and Waldspurger's theorem to the cases \(\cos\theta=\pm\sqrt2/2\). Heegner-point methods have also been applied to the special angles \(\pi/3\) and \(2\pi/3\) \cite{HK}, while \(2\)-descent has produced families of non-\(\theta\)-congruent numbers \cite{GLN}.

This paper studies variants of the classical congruent number problem following the general strategy of Tunnell's work.
The conceptual mechanisms behind Tunnell's theorem are the Shimura lift \cite{Sh} and Waldspurger's theorem \cite{Wa}. The Shimura lift is a Hecke-equivariant map from modular forms of half-integral weight to modular forms of integral weight. For a cuspidal form \(g\) of weight \(3/2\) corresponding to a cuspidal newform of weight \(2\), Waldspurger's theorem relates the squares of its Fourier coefficients to central \(L\)-values of quadratic twists, subject to local conditions. The Hecke correspondence is taken at primes away from the level.
Tunnell's contribution was to make this relation explicit for the congruent number elliptic curve~\(E\) of \eqref{eq:orig-curve}:
he identified the forms of weight \(3/2\) whose Shimura lifts are the newform of weight \(2\) corresponding to~\(E\), and expressed their coefficients in terms of representation numbers of concrete ternary quadratic forms, as in Theorem~\ref{thm:Tunnell}.

Thus, constructing explicit half-integral-weight forms is essential for obtaining Tunnell-type results for both the classical and the $\theta$-congruent number problems. In this paper, in order to construct the required forms of weight $3/2$ systematically for a fixed angle $\theta$ with $\cos\theta\in\mathbb{Q}$, we use Brandt modules \cite[pp.~116--139]{Gr} and generalized theta series.
This approach originates in Gross's construction \cite{Gr} of half-integral-weight forms associated, under the Shimura correspondence, with newforms of weight $2$.
For a newform of prime level $N$, Gross used theta series attached to trace-zero ternary lattices arising from maximal orders in the definite quaternion algebra ramified at $N$ and $\infty$. When $L(f,1)\neq0$, his construction gives a nonzero form of weight $3/2$ whose Fourier coefficients determine the central
$L$-values of negative fundamental discriminant twists satisfying a prescribed quadratic residue symbol condition at $N$.
Gross's construction was subsequently generalized by B\"ocherer--Schulze-Pillot \cite{BSP1,BSP2} to odd square-free levels and certain higher weights, although there were still restrictions on the twists detected by each theta series. At odd prime level, Mao--Rodr\'iguez-Villegas--Tornar\'ia \cite{MRVT} introduced local weight functions and computed generalized theta series for both signs of the twisting discriminant, without requiring \(L(f,1)\neq0\).
Pacetti--Tornar\'ia \cite{PT} developed corresponding examples at composite levels.
More recently, Sirolli--Tornar\'ia \cite{ST} gave a systematic construction of half-integral-weight forms over totally real fields.  For a suitable Hilbert newform \(f\), they first divide its quadratic twists into families according to the signs of the corresponding quadratic characters at the primes dividing the level and at the real places. For each admissible type of root number \(+1\), they construct a generalized theta series whose coefficients at fundamental discriminants determine the corresponding central \(L\)-values. Its other coefficients need not vanish.
We apply this construction over \(\mathbb Q\) to the newforms of weight \(2\) associated with the elliptic curves \(E_{r,s}\) given in \eqref{eq:thetacong}.

The contribution of this paper is twofold. First, we formulate the effective Waldspurger-type construction in a form directly applicable to \(\theta\)-congruent number problems. Second, for the four angles
\begin{equation}\label{eq:cos-values}
\cos\theta=\pm\frac35,\qquad
\cos\theta=\pm\frac45,
\end{equation}
we determine the relevant forms of weight \(3/2\), their levels, characters, and coefficient formulas, and derive explicit Tunnell-type criteria for the corresponding \(\theta\)-congruent number problems.
Moreover, we obtain infinite families of non-\(\theta\)-congruent primes for angles in \eqref{eq:cos-values}.

The two principal conclusions are as follows.
For \(j\in\{3,4\}\) and \(\mu\in\{1,-1,2,-2\}\), let
\(f_j^{(\mu)}\) be the normalized newform attached to
\(E_{5,j}^{(\mu)}\), of level \(N_j^{(\mu)}\).
Fix \(\cos\theta=\sigma j/5\), with \(\sigma\in\{1,-1\}\), and
write a positive square-free integer as \(n=2^e m\), with
\(e\in\{0,1\}\) and \(m\) odd.
Put \(\eta_m=(-1)^{(m-1)/2}\), \(D_0=\eta_m m\), and
\(\mu=\sigma2^e\eta_m\).
We include \(D_0=1\) among the fundamental discriminants, with trivial
quadratic character.

We now state our two main theorems precisely:

\begin{maintheorem}[Tunnell-type criterion]
\label{thm:main-A}
If the twist \(f_j^{(\mu)}\otimes\chi_{D_0}\) has root number
\(-1\), then \(L(E_{5,\sigma j}^{(n)},1)=0\).
For root number \(+1\), the rule in Section~\ref{sec:tunnell-criteria}
selects a unique row \(r\) of Table~\ref{tab:metadata-three} for
\(j=3\), or Table~\ref{tab:metadata-four} for \(j=4\).
Write \(G_r(q)=\sum_{a\geq1}a_r(a)q^a\).
There is an explicit constant \(\kappa_r>0\), depending only on the
row and its normalization, such that
\[
  L(E_{5,\sigma j}^{(n)},1)
  =2^{\omega(D_0,N_j^{(\mu)})}\kappa_r
    \frac{|a_r(m)|^2}{\sqrt m},
\]
where \(\omega(D_0,N)\) counts the primes dividing both \(D_0\) and
\(N\).
In particular, a nonzero \(a_r(m)\) implies that \(n\) is not
\(\theta\)-congruent.
Assuming the rank part of BSD for \(E_{5,\sigma j}^{(n)}\), the integer
\(n\) is \(\theta\)-congruent if and only if the root number is \(-1\),
or it is \(+1\) and \(a_r(m)=0\).
\end{maintheorem}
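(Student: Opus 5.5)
The plan is to reduce Theorem~\ref{thm:main-A} to the Sirolli--Tornar\'ia formula \cite{ST} over \(\mathbb Q\), applied to the newforms \(f_j^{(\mu)}\). The first step is purely arithmetic. Since \(E_{5,\sigma j}^{(n)}\) with \(n=2^e m\) is the twist of \(E_{5,\sigma j}\) by \(\sigma 2^e\eta_m\cdot D_0\), and \(E_{5,-j}\) is the twist of \(E_{5,j}\) by \(-1\) (replace \(x\mapsto -x\) in \eqref{eq:thetacong}), we get that \(E_{5,\sigma j}^{(n)}\) is isogenous to (indeed isomorphic to) \((E_{5,j}^{(\mu)})^{(D_0)}\). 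Hence
\[
L(E_{5,\sigma j}^{(n)},s)=L(f_j^{(\mu)}\otimes\chi_{D_0},s).
\]
Here \(D_0\equiv1\pmod 4\) is an odd fundamental discriminant (or \(1\)). This is the reduction to odd fundamental discriminants mentioned in the abstract. If the root number of \(f_j^{(\mu)}\otimes\chi_{D_0}\) is \(-1\), the functional equation forces \(L(\cdot,1)=0\), which proves the first claim.

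For root number \(+1\), I would compute the local root numbers of \(f_j^{(\mu)}\otimes\chi_{D_0}\). At primes \(p\mid N_j^{(\mu)}\) with \(p\nmid D_0\), the local sign is determined by \(\chi_{D_0}(p)\) and the Atkin--Lehner sign. At \(p\mid\gcd(D_0,N)\) and at \(2\) it is determined by \(D_0\bmod 8\) and the local type of \(f\). At \(\infty\) it is determined by the sign \(\eta_m\). This partitions the admissible \(D_0\) into the Sirolli--Tornar\'ia families indexed by sign vectors. The rule of Section~\ref{sec:tunnell-criteria} is exactly the assignment of a sign vector, hence a row \(r\), and uniqueness is automatic because the sign vector is a function of \(D_0\).

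For each row, \cite{ST} provides a definite quaternion algebra \(B\) ramified at the primes where the prescribed sign is \(-\), together with an Eichler order of level \(N_j^{(\mu)}\) (with the appropriate local choice at \(2\)). It also provides an eigenvector \(\varphi\) in the Brandt module with the Hecke eigenvalues of \(f_j^{(\mu)}\), and a generalized theta series \(G_r=\sum_a a_r(a)q^a\) built from ternary lattices and local weight functions. Their main theorem gives
\[
L(f\otimes\chi_{D},1)=c\,2^{\omega(D,N)}\frac{\langle f,f\rangle}{\langle\varphi,\varphi\rangle}\frac{|a_r(|D|)|^2}{\sqrt{|D|}}.
\]
Setting \(\kappa_r\) to be the constant times the Petersson/Brandt norm ratio gives the displayed identity with \(|D_0|=m\). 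Positivity of \(\kappa_r\) follows because both norms are positive, and because \(\kappa_r\) does not depend on \(D_0\) once the row is fixed.

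The consequences then follow. If \(a_r(m)\ne0\), then \(L(E^{(n)},1)\ne0\), so by Kolyvagin (or Coates--Wiles in the CM case) the rank is \(0\). Since \(E_{5,\sigma j}^{(n)}\) has torsion containing only the \(2\)-torsion points here, \(n\) is not \(\theta\)-congruent by Fujiwara's criterion. Conversely, the rank part of BSD gives positive rank exactly when \(L(\cdot,1)=0\). To conclude that \(n\) is \(\theta\)-congruent, one also needs to know that any non-\(2\)-torsion torsion point cannot occur. This requires a small check that the torsion of \(E_{5,\sigma j}^{(n)}\) is \(E[2]\) except for finitely many \(n\), which I would verify directly.

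I expect the main obstacle to be the local computation at \(2\) and at primes dividing \(\gcd(D_0,N)\), where \(\chi_{D_0}\) is ramified. This computation is needed to match the \cite{ST} hypotheses: the newform must be of the right local type with odd square-free part of the level, and the \(2\)-adic level exponents of \(N_j^{(\mu)}\) must be compatible. I would handle it by computing the conductors of \(E_{5,\pm3}\) and \(E_{5,\pm4}\) and their twists by \(\pm1,\pm2\) via Tate's algorithm. I would then choose \(\mu\) precisely so that the \(2\)-part of the conductor is stable under twisting by odd \(D_0\).
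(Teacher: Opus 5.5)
Your outline follows the paper's route: the identity $E_{5,\sigma j}^{(n)}\cong(E_{5,j}^{(\mu)})^{(D_0)}$ with $\sigma n=\mu D_0$, the type and root-number bookkeeping, an appeal to Sirolli--Tornar\'ia, then Kolyvagin and Fujiwara. The $2$-adic obstacle you anticipate does not arise. Since $D_0\equiv1\pmod 4$, $\chi_{D_0}$ is unramified at $2$, which is exactly why the four base twists are used. Two steps, however, do not go through as you state them.

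First, the formula you attribute to \cite{ST} is not the one it gives. The construction uses a single definite algebra, ramified only at $p_0$ and $\infty$, for all types. It reaches each type, including $D_0>0$, through first- and second-kind weights built from an auxiliary discriminant $l$. The output is a product formula
\[
L(f\otimes\chi_l,1)\,L(f\otimes\chi_D,1)=2^{\omega(D,N)}c_{0,N}\frac{\langle f,f\rangle_{\mathrm{Pet}}}{\langle\varphi_f,\varphi_f\rangle_B}\frac{|a_\gamma(|D|)|^2}{\sqrt{|lD|}},
\]
even for $l=1$, where the companion value is $L(f,1)$ as in Gross. So $\kappa_r$ contains the factor $1/\bigl(\sqrt{|l|}\,L(f\otimes\chi_l,1)\bigr)$, and positivity of the two norms does not by itself give $\kappa_r>0$. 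Finiteness and positivity of $\kappa_r$ are needed, and with them the equivalence $L(E_{5,\sigma j}^{(n)},1)=0\iff a_r(m)=0$. Both require the row's $l$ to satisfy \eqref{eq:auxiliary-local-conditions}, in particular $L(f\otimes\chi_l,1)\neq0$. This is why each row records its own $l$.

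Second, the torsion statement is needed for \emph{every} square-free $n$. You use it in the unconditional direction, where you assert it without proof. It is not needed where you invoke it later: if $L=0$, BSD gives positive rank, and a point of infinite order already lies outside $E[2]$. Knowing that the torsion is $E[2]$ ``except for finitely many $n$'' proves nothing at the exceptions.

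The paper's Lemma~\ref{lem:torsion-5-3-5-4} is uniform in $d$, and the argument is short:
\begin{itemize}
\item With full rational $2$-torsion, Mazur's theorem leaves only $3$- and $4$-torsion to exclude.
\item Halving $(e_i,0)$ requires both $e_i-e_j$ and $e_i-e_k$ to be squares. Their ratios, $-\tfrac14,\tfrac15,\tfrac45$ (resp.\ $-\tfrac19,\tfrac1{10},\tfrac9{10}$), are independent of $d$ and are not squares, so there is no $4$-torsion.
\item The substitution $x=dz$ turns the $3$-division polynomial into a $d$-free quartic with no root mod $11$ (resp.\ mod $19$), so there is no $3$-torsion.
\end{itemize}
With this lemma and the corrected constant, your argument becomes the paper's.
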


\begin{proof}
The proof follows from Theorems~\ref{thm:explicit-central-values}
and~\ref{thm:tunnell-four-angles} in
Section~\ref{sec:tunnell-criteria}.
\end{proof}

\begin{maintheorem}[Non-\(\theta\)-congruent primes]
\label{thm:main-B}
A prime \(p\) is not \(\theta\)-congruent in the following cases:
\[
\begin{array}{ccl}
\cos\theta&M&p\pmod M\\
\hline
 3/5&40&11,19,21,29\\
 -3/5&40&3,7,23,27\\
 4/5&120&11,23,31,43,47,53,59,67,77,79\\
 -4/5&120&7,19,83,91,103,107.
\end{array}
\]
\end{maintheorem}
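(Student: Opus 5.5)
The plan is to deduce Theorem~\ref{thm:main-B} from Theorem~\ref{thm:main-A}. For a prime \(p\) in one of the listed classes we show that the relevant twist has root number \(+1\) and that the coefficient \(a_r(p)\) of the selected form \(G_r\) is nonzero. By the central value formula this gives \(L(E_{5,\sigma j}^{(p)},1)\neq0\). Kolyvagin (via Gross--Zagier and Bump--Friedberg--Hoffstein/Murty--Murty nonvanishing, or simply Kato) then gives rank \(0\), so \(p\) is not \(\theta\)-congruent unconditionally.

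Write \(p=m\) odd, so \(e=0\), \(D_0=\eta_p p\) and \(\mu=\sigma\eta_p\). The first step is to fix the residue of \(p\) modulo \(M\).
\begin{itemize}
\item Modulo \(4\) this fixes \(\eta_p\), hence \(\mu\) and the newform \(f_j^{(\mu)}\).
\item Modulo \(5\), and modulo \(3\) when \(j=4\), the conductor of \(E_{5,4}\) involves \(3\) through \(r^2-s^2=9\). Together with the class modulo \(8\), this fixes the local signs \(\chi_{D_0}(\ell)\) at each prime \(\ell\mid N_j^{(\mu)}\).
\end{itemize}
From these we compute the root number of \(f_j^{(\mu)}\otimes\chi_{D_0}\). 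It equals \(w(f)\chi_{D_0}(-N')\), corrected at the primes dividing both \(D_0\) and the level. We check that it is \(+1\) on each listed class and read off the unique row \(r\) of Table~\ref{tab:metadata-three} or Table~\ref{tab:metadata-four} chosen by the rule of Section~\ref{sec:tunnell-criteria}.

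The second step is to show \(a_r(p)\neq0\) for every such \(p\). Here \(a_r\) is a linear combination of representation numbers of ternary forms in the genus attached to the Brandt module. The idea is a congruence argument in the spirit of classical results on primes \(\equiv 3\pmod 8\) for congruent numbers. We write \(a_r(p)=\sum_i c_i\,r_p(Q_i)\) and pass to the genus theta series: by Siegel--Weil, \(\sum_i w_i r_p(Q_i)\) is an explicit class number expression \(\asymp h(-\eta_p p\cdot\text{const})\). Then we find a modulus \(\ell\) (likely \(2\) or \(4\), possibly \(3\)) with two properties:
\begin{itemize}
\item \(a_r(p)\) is congruent modulo \(\ell\) to a multiple of that genus count, or of a single form's count;
\item in the listed residue classes, the count is nonzero modulo \(\ell\).
\end{itemize}
Nonvanishing of \(r_p(Q)\) modulo \(2\) or \(4\) can come from the action of the automorphism group on solutions: solutions with a zero coordinate are excluded by the local conditions \(p\bmod M\), so the orbits have a known size and parity. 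Alternatively, Gauss's formula relates \(r_p\) to \(h(-4p)\) or \(h(-p)\), whose \(2\)-adic valuation is controlled by genus theory for prime \(p\). So \(h\) is odd, or exactly divisible by \(2\), in the right classes.

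The main obstacle is this second step: extracting a clean congruence from the explicit coefficient formula and verifying it class by class. If the direct orbit-counting argument fails, the fallback is different. We would write \(G_r\) modulo \(2\) (or \(4\)) as a product of simpler theta functions, Jacobi-triple-product style, whose \(p\)-th coefficient can be computed in closed form for prime \(p\) in the given progression. The classes where this works are presumably exactly those listed, and the remaining root-number-\(+1\) classes are where the congruence degenerates. Finally, we confirm there is no case where \(\omega(D_0,N)\) forces vanishing; this holds because \(p\nmid N\) in the listed classes, since \(p\neq 3,5\) is automatic from the residues.
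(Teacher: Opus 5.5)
Your reduction to Theorem~\ref{thm:main-A} is fine: on each listed class the root number is \(+1\), a unique row is selected, and \(a_r(p)\neq0\) gives rank \(0\) by Kolyvagin. Your idea of controlling the parity of \(h(-p)\) and \(h(-4p)\) by genus theory also matches Lemma~\ref{lem:three-square-prime-parity}.

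The gap is the step you flag as the main obstacle: nothing in the proposal actually proves \(a_r(p)\neq0\), and the tools you name do not apply as stated.

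First, \(a_r(p)\) is not a combination \(\sum_i c_i r_p(Q_i)\) of representation numbers. It is a weighted sum over vectors with \(Q(v)=\ell p\), not \(Q(v)=p\). The weights are Legendre symbols modulo \(\ell\) times character values \(\psi_q(\boldsymbol u_q\cdot v)\). So the genus counts of the Brandt lattices concern \(\ell p\), not \(p\). Moreover \(G_r\) is cuspidal, while Siegel--Weil only computes the genus (Eisenstein) average. Any relation between the two is at best a congruence, and that congruence still has to be proved.

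Second, the coefficients at the relevant indices are divisible by \(2^{e_r}\) with \(e_r\in\{2,3,4,5\}\), and they may lie in \(\mathbb Z[i]\setminus\mathbb Z\). A congruence for the raw coefficient modulo \(2\) or \(4\) is therefore vacuous. What is needed is information about \(a_r(p)/2^{e_r}\) modulo \(\lambda=1+i\). Your step ``find a modulus with these two properties'' restates what must be shown rather than showing it. The orbit-counting and Jacobi-triple-product fallbacks are not carried out, and that is exactly where the proof lies. (Also, \(p\neq3\) is not automatic: \(p=3\) lies in the class \(3\pmod{40}\) for \(-3/5\). This is harmless, since \(3\nmid N_3^{(\mu)}\).)

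The paper fills this gap with Proposition~\ref{prop:sturm-certified-prime-congruences}. For each row it proves
\(\operatorname{pr}_{C_r,M}G_r/2^{e_r}\equiv\operatorname{pr}_{C_r,M}\theta_{Q_r}/2^{d_r}\pmod\lambda\), where \(Q_r\) is \(x^2+y^2+z^2\) or \(x^2+3y^2+5z^2\). These comparison forms have level \(4\) or \(60\) and lie outside the Brandt genus. The proof has three steps:
\begin{enumerate}
\item project to the residue classes by twisting with quadratic characters;
\item pass to fourth powers, obtaining forms of weight \(6\) with trivial character;
\item apply Sturm's theorem through \(B_M\), dividing repeatedly by \(\lambda\) to certify the \(2\)-adic normalizations.
\end{enumerate}
This is a finite, certified computation, and only afterwards do the parity lemmas enter.

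Note also that a comparison with the three-square theta series cannot work for the listed classes with \(p\equiv7\pmod8\). These are \(7,23\pmod{40}\) for \(-3/5\), \(23,31,47,79\pmod{120}\) for \(4/5\), and \(7,103\pmod{120}\) for \(-4/5\). There \(r_3(p)=0\). The paper instead uses \(x^2+3y^2+5z^2\) and Lemma~\ref{lem:135-prime-parity}, whose parity is governed by \(\left(\frac{p}{3}\right)\) and \(\left(\frac{p}{5}\right)\) through the one-class genera of discriminants \(-12\) and \(-60\). Without such an explicit, provable congruence to a series of computable parity, the second step of your argument does not go through.
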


\begin{proof}
The proof follows from Theorem~\ref{thm:prime-non-theta-congruent} in
Section~\ref{sec:prime-noncongruence}.
\end{proof}

The paper is organized as follows.  In Section~\ref{sec:theta-congruent}, we recall Fujiwara's elliptic curve criterion and prove that every quadratic twist in the four families has torsion subgroup \((\mathbb Z/2\mathbb Z)^2\).
Thus \(\theta\)-congruence is equivalent to positive Mordell--Weil rank in these cases.  Section~\ref{sec:st-construction} specializes the Sirolli--Tornar\'ia construction to the classical setting over \(\mathbb Q\) and explains the ideal and lattice calculations used to obtain the local weights.  Section~\ref{sec:representative-40a1-computation} gives a complete representative calculation.
The resulting forms and coefficient formulas are summarized in Section~\ref{sec:explicit-series} and recorded in Appendices~\ref{app:theta-metadata}--\ref{app:fourier-data}.  In Section~\ref{sec:tunnell-criteria}, we give the Tunnell-type criterion for every positive square-free twisting parameter. Finally, Section~\ref{sec:prime-noncongruence} proves the unconditional prime non-\(\theta\)-congruence theorem using Sturm bounds and classical quadratic form theory, and concludes with brief computational details
and  elapsed times.

\section{\texorpdfstring{\(\theta\)-congruent numbers}{theta-congruent numbers}}
\label{sec:theta-congruent}

We retain the notation of the introduction.  For a nonzero square-free
integer \(d\), the \(d\)-th quadratic twist of \(E_{r,s}\) is
\[
  E_{r,s}^{(d)}:\quad
  y^2=x^3+2sdx^2-(r^2-s^2)d^2x
  =x\bigl(x-d(r-s)\bigr)\bigl(x+d(r+s)\bigr).
\]
Fujiwara's criterion takes the following form.

\begin{theorem}[Fujiwara \cite{Fu}]
\label{thm:fujiwara}
A positive square-free integer \(n\) is \(\theta\)-congruent if and only if
\(E_{r,s}^{(n)}(\mathbb Q)\) contains a point outside its rational
\(2\)-torsion subgroup.
\end{theorem}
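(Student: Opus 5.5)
We prove both directions by an explicit correspondence between rational triangles with angle \(\theta\) and area \(n\sqrt{r^2-s^2}\) and rational points of \(E_{r,s}^{(n)}\) off the \(2\)-torsion. Let a triangle have sides \(a,b\) enclosing \(\theta\) and opposite side \(c\). Since \(\sin\theta=\sqrt{r^2-s^2}/r\), the area condition becomes
\[
  \tfrac12 ab\,\frac{\sqrt{r^2-s^2}}{r}=n\sqrt{r^2-s^2},\qquad\text{that is,}\qquad ab=2rn,
\]
and the law of cosines reads \(c^2=a^2+b^2-2ab\,s/r=a^2+b^2-4sn\).

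\emph{From triangles to points.} Following Fujiwara, set
\[
  x=\frac{n\bigl(a+c\bigr)}{b}\ \text{(suitably shifted)},\qquad y=\text{a rational multiple of } x(a+c),
\]
and tune the shift and constants so that \(x(x-n(r-s))(x+n(r+s))\) is a square. A cleaner route uses the standard parametrization. Write \(c^2-(a-b)^2=2ab(1-s/r)=4n(r-s)\), and similarly \((a+b)^2-c^2=4n(r+s)\). Then
\[
  \Bigl(\frac{a+b}{2}\Bigr)^2-n(r+s),\qquad \Bigl(\frac{c}{2}\Bigr)^2,\qquad \Bigl(\frac{a-b}{2}\Bigr)^2+n(r-s)
\]
are three squares in a "progression" with common differences \(n(r+s)\) and \(n(r-s)\). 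Setting \(x=(c/2)^2\), or a translate of it, the three values \(x\), \(x-n(r-s)\), \(x+n(r+s)\) are, up to the translation, all rational squares. Their product is then a square, so we obtain a rational point \((x,y)\). Having all three factors square means the point lies in \(2E(\mathbb Q)\) by the standard \(2\)-descent criterion for full \(2\)-torsion. We then check it is not \(2\)-torsion: it has \(y\neq0\), since \(c\neq0\) and \(a\neq b\) or the analogous factor is nonzero, and its double is not \(O\).

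\emph{From points to triangles.} Conversely, take a point \(P\) of \(E_{r,s}^{(n)}(\mathbb Q)\) outside \(E[2]\). Then \(Q=2P\) has \(x(Q)\), \(x(Q)-n(r-s)\) and \(x(Q)+n(r+s)\) all squares by the duplication formula. Inverting the formulas above recovers rational \(a,b,c\) with \(ab=2rn\) and the cosine law. We may flip signs so that \(a,b,c>0\). Positivity of \(ab\) is the delicate point, and it holds because \(x(Q)+n(r+s)\) exceeds the other two values.

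The main obstacle is bookkeeping the signs and positivity so that the recovered triple is a genuine nondegenerate triangle, together with the torsion issue. A point \(P\) of order \(4\), for instance, could give \(2P\in E[2]\), which yields a degenerate triangle. One must show that such \(P\) do not arise, or are handled separately, using that \(n\) is square-free. Since the paper cites this as Fujiwara's theorem, we will follow \cite{Fu} for these verifications.
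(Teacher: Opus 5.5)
The paper gives no proof of this statement; it quotes it from \cite{Fu}, so your argument has to stand on its own. Its core computation is correct, and it needs no shift or translate. Write \(E\) for \(E_{r,s}^{(n)}\) and put \(X=c^2/4\). Then \(X-n(r-s)=\bigl(\tfrac{a-b}{2}\bigr)^2\) and \(X+n(r+s)=\bigl(\tfrac{a+b}{2}\bigr)^2\), so \(Q=\bigl(X,\,c(a^2-b^2)/8\bigr)\) lies in \(2E(\mathbb Q)\). Conversely, for any \(Q\neq O\) in \(2E(\mathbb Q)\), taking nonnegative square roots gives \(a,b,c>0\) with \(ab=2rn\) and the cosine law, since \(X\ge n(r-s)>0\) and \(X+n(r+s)>X-n(r-s)\ge 0\).

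The genuine gap is your treatment of \(2\)-torsion. In the forward direction, \(y(Q)=0\) exactly when \(a=b\). So for an isosceles triangle, \(Q=(n(r-s),0)\) is a \(2\)-torsion point, and your check that \(y(Q)\neq0\) fails. In the converse direction, your claim that a point of order \(4\) yields a degenerate triangle is false. The only \(2\)-torsion point that can lie in \(2E(\mathbb Q)\) is \((n(r-s),0)\), because the other two would require \(-n(r-s)\) or \(-n(r+s)\) to be a square. Your inversion sends that point to the nondegenerate isosceles triangle with \(a=b=\sqrt{2rn}\) and \(c=2\sqrt{n(r-s)}\).

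Square-freeness cannot exclude such points. For \(\cos\theta=1/2\) and \(n=1\), the curve \(y^2=x(x-1)(x+3)\) has \(P=(3,6)\) with \(2P=(1,0)\). This matches the equilateral triangle of side \(2\), and \(n=1\) is indeed \(\pi/3\)-congruent via that triangle. So the verification you defer to \cite{Fu} aims at a false statement, and carrying it out would discard exactly these triangles.

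The repair uses what you already wrote. Since \(E(\mathbb Q)\neq E(\mathbb Q)[2]\) if and only if \(2E(\mathbb Q)\neq\{O\}\), it suffices that triangles correspond to points of \(2E(\mathbb Q)\setminus\{O\}\). Given a triangle, your \(Q\) lies in \(2E(\mathbb Q)\) and \(Q\neq O\), so \(Q=2P\) with \(P\notin E[2]\), whether or not \(a=b\). Given \(P\notin E[2]\), apply your inversion to \(Q=2P\neq O\); order-\(4\) points need no separate treatment. This is also why, for its own curves, the paper must rule out \(4\)-torsion in Lemma~\ref{lem:torsion-5-3-5-4} before identifying \(\theta\)-congruence with positive rank in Proposition~\ref{prop:rank-criterion-four-cases}.
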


For the four curves studied here, this criterion is equivalent to a rank
condition.

\begin{lemma}
\label{lem:torsion-5-3-5-4}
For every nonzero square-free integer \(d\), \(E_{5,\pm3}^{(d)}(\mathbb Q)_{\mathrm{tors}} \simeq E_{5,\pm4}^{(d)}(\mathbb Q)_{\mathrm{tors}} \simeq (\mathbb Z/2\mathbb Z)^2.\)
\end{lemma}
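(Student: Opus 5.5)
The curves are \(E_{5,s}^{(d)}: y^2=x(x-d(5-s))(x+d(5+s))\) with \(s\in\{\pm3,\pm4\}\). The full \(2\)-torsion is rational, so \((\mathbb Z/2\mathbb Z)^2\subseteq E(\mathbb Q)_{\mathrm{tors}}\). By Mazur's theorem the torsion group is then one of \(\mathbb Z/2\times\mathbb Z/2N\) with \(N\in\{1,2,3,4\}\). It therefore suffices to exclude a rational point of order \(4\) and a rational point of order \(3\).

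\textbf{Order \(4\).} We use the standard \(2\)-descent criterion for \(y^2=(x-e_1)(x-e_2)(x-e_3)\) with rational \(e_i\): a \(2\)-torsion point \((e_i,0)\) lies in \(2E(\mathbb Q)\) if and only if \(e_i-e_j\) and \(e_i-e_k\) are both rational squares. The roots are \(0\), \(d(5-s)\) and \(-d(5+s)\).

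For \(s=\pm3\), the roots are \(\{0,2d,-8d\}\) up to order; both cases give the same set, with \(d\mapsto -d\) swapping them.
\begin{itemize}
\item At \(0\), the differences are \(-2d\) and \(8d\). Both squares would require \(-2d\) and \(2d\) to be squares, which is impossible for \(d\neq0\).
\item At \(2d\), the differences are \(2d\) and \(10d\). Their ratio would be \(5\), a square, which is false.
\item At \(-8d\), the differences are \(-8d\) and \(-10d\). Their ratio \(5/4\) is not a square.
\end{itemize}

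For \(s=\pm4\), the roots are \(\{0,d,-9d\}\) up to sign of \(d\).
\begin{itemize}
\item At \(0\), the differences are \(-d\) and \(9d\). We would need both \(d\) and \(-d\) to be squares, which is impossible.
\item At \(d\), the differences are \(d\) and \(10d\), with ratio \(10\), not a square.
\item At \(-9d\), the differences are \(-9d\) and \(-10d\), with ratio \(10/9\), not a square.
\end{itemize}
The argument only uses ratios and signs, so it is uniform in the square-free integer \(d\).

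\textbf{Order \(3\).} This is the step needing slightly more care, since it must hold for all \(d\). I would use reduction modulo good primes: for \(p\nmid 2\Delta\) with \(p\) odd, torsion injects into \(\tilde E(\mathbb F_p)\). The obstacle is that \(d\) varies. The standard trick is that for \(p\nmid 2\cdot 3\cdot 5\cdot 7\cdot d\) (the discriminant is supported on \(2,5,d\) and the factors \(5\pm s\)), the twist satisfies \(\#\tilde E^{(d)}(\mathbb F_p)=p+1\mp a_p\). For \(p\equiv 3\pmod 4\) the condition on \(E_{5,s}\) is irrelevant; instead I would pick primes \(p\equiv 2\pmod 3\) for which the untwisted curve has \(a_p\equiv0\pmod3\)… Rather than chase that, I would argue directly.

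A cleaner route is via \(3\)-division. Suppose \(P\) has order \(3\). Then \(\mathbb Z/2\times\mathbb Z/6\subseteq E^{(d)}(\mathbb Q)\). The twist by \(d\) is isomorphic over \(\mathbb Q(\sqrt d)\) to \(E_{5,s}\), so \(E_{5,s}(\mathbb Q(\sqrt d))\) contains a point of order \(3\) lying in the \(\chi_d\)-eigenspace. I would then compute the \(3\)-division polynomial \(\psi_3\) of \(E_{5,s}\) and show that its roots \(x_0\) are not rational; since \(x\)-coordinates of \(E^{(d)}\) are \(d\) times those of \(E_{5,s}\), rationality of \(x(P)\) on any twist is equivalent to rationality on \(E_{5,s}\). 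Concretely, \(\psi_3=3x^4+8sx^3-6(25-s^2)x^2-(25-s^2)^2\). For \(s=\pm3,\pm4\) I would check by the rational root theorem that this quartic has no rational roots, which is a finite check of divisors. This settles order \(3\) for all \(d\) simultaneously, and combined with the order-\(4\) exclusion and Mazur's theorem gives the lemma.
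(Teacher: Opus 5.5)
Your proof is correct and follows the paper's argument. For $4$-torsion you use the same $2$-descent criterion, with the same ratios of root differences. For $3$-torsion you substitute $x=dz$ into the $3$-division polynomial, and your general $\psi_3$ specializes to the paper's quartics $3z^4+24z^3-96z^2-256$ and $3z^4+32z^3-54z^2-81$. The only difference there is that you use the rational root theorem (a valid finite check) where the paper reduces modulo $11$ and $19$. Delete the abandoned reduction-modulo-$p$ digression at the start of the order-$3$ step: it is incomplete and its side remarks are inaccurate.
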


\begin{proof}
It suffices to treat \(s=3,4\), since \(E_{5,-s}^{(d)}=E_{5,s}^{(-d)}\).
Both curves have full rational \(2\)-torsion. 
Note that for a curve \(y^2=(x-e_1)(x-e_2)(x-e_3)\), a point $(e_i, 0)$ is divisible by $2$ over $\Q$ only if both $e_i - e_j$ and $e_i - e_k$ are rational squares. This excludes rational \(4\)-torsion: the
ratios of the two relevant root differences are \[-\frac14,\ \frac15,\ \frac45 \quad(s=3), \qquad -\frac19,\ \frac1{10},\ \frac9{10} \quad(s=4),\]
none of which is a square in \(\mathbb Q\).

A rational point of order \(3\) would, on substituting \(x=dz\) in
the \(3\)-division polynomial, give a rational root of one of the
following polynomials:
\[
 3z^4+24z^3-96z^2-256 \quad(s=3),
 \qquad
 3z^4+32z^3-54z^2-81 \quad(s=4).
\]
The first has no root modulo \(11\), and the second has no root modulo
\(19\). Any rational root has denominator dividing \(3\), so
reduction at these primes is defined. This excludes rational
\(3\)-torsion. Mazur's classification of rational torsion subgroups
\cite{Mazur} now gives the result.
\end{proof}

\begin{proposition}
\label{prop:rank-criterion-four-cases}
Let \(n\) be a positive square-free integer and let
\((r,s)\in\{(5,\pm3),(5,\pm4)\}\).  Then
\[
 n\text{ is \(\theta\)-congruent for }\cos\theta=s/r
 \quad\Longleftrightarrow\quad
 \operatorname{rank}E_{r,s}^{(n)}(\mathbb Q)>0.
\]
\end{proposition}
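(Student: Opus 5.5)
The proposition follows by combining Fujiwara's criterion (Theorem~\ref{thm:fujiwara}) with the torsion computation of Lemma~\ref{lem:torsion-5-3-5-4}. By Theorem~\ref{thm:fujiwara}, \(n\) is \(\theta\)-congruent for \(\cos\theta=s/r\) exactly when \(E_{r,s}^{(n)}(\mathbb Q)\) contains a point outside its rational \(2\)-torsion subgroup. So it suffices to show that, for the four pairs \((r,s)\in\{(5,\pm3),(5,\pm4)\}\), this condition holds if and only if \(E_{r,s}^{(n)}(\mathbb Q)\) has positive rank.

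First I identify the rational \(2\)-torsion subgroup with the whole torsion subgroup. The twisted curve is written as \(y^2=x\bigl(x-n(r-s)\bigr)\bigl(x+n(r+s)\bigr)\). Its three roots \(0\), \(n(r-s)\), \(-n(r+s)\) are rational and pairwise distinct, because \(n\neq0\) and \(0<|s|<r\). Hence \(E_{r,s}^{(n)}(\mathbb Q)[2]\simeq(\mathbb Z/2\mathbb Z)^2\). Lemma~\ref{lem:torsion-5-3-5-4}, applied with \(d=n\) (a nonzero square-free integer), gives \(E_{r,s}^{(n)}(\mathbb Q)_{\mathrm{tors}}\simeq(\mathbb Z/2\mathbb Z)^2\). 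Therefore \(E_{r,s}^{(n)}(\mathbb Q)[2]=E_{r,s}^{(n)}(\mathbb Q)_{\mathrm{tors}}\).

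Next I use Mordell--Weil: \(E_{r,s}^{(n)}(\mathbb Q)\) is finitely generated, so it equals its torsion subgroup if and only if its rank is \(0\). Thus a point outside the rational \(2\)-torsion subgroup exists if and only if a non-torsion point exists, that is, if and only if \(\operatorname{rank}E_{r,s}^{(n)}(\mathbb Q)>0\). Combining this with Theorem~\ref{thm:fujiwara} gives the stated equivalence in both directions.

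There is no serious obstacle. The one point needing care is that Fujiwara's phrase ``rational \(2\)-torsion subgroup'' must be the full group \(E[2](\mathbb Q)\), so that it can be compared directly with the torsion subgroup. The signs \(s=\pm3,\pm4\) are already covered by the lemma through \(E_{5,-s}^{(d)}=E_{5,s}^{(-d)}\), since the lemma is stated for every nonzero square-free \(d\), including negative ones.
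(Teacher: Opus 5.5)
Your proposal is correct and is essentially the paper's own argument: Fujiwara's criterion combined with Lemma~\ref{lem:torsion-5-3-5-4}, which shows that all rational torsion is \(2\)-torsion, so a point outside the \(2\)-torsion exists exactly when the rank is positive. The extra details you supply, namely full rational \(2\)-torsion from the three distinct rational roots and finite generation of the Mordell--Weil group, are steps the paper's two-line proof leaves implicit.
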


\begin{proof}
By Theorem~\ref{thm:fujiwara}, \(n\) is \(\theta\)-congruent exactly
when \(E_{r,s}^{(n)}(\mathbb Q)\) has a rational point outside its
\(2\)-torsion.  Lemma~\ref{lem:torsion-5-3-5-4} says that all rational
torsion is \(2\)-torsion, so this is equivalent to positive
Mordell--Weil rank of $E_{r, s}^{(n)}$ over \(\mathbb Q\).
\end{proof}

The next observation verifies the level hypothesis used in
Section~\ref{sec:st-construction}.

\begin{proposition}
\label{prop:conductor-odd-squarefree}
Let \(r,s\in\mathbb Z\) be coprime, with \(r>0\) and \(|s|<r\). The conductor of
\(E_{r,s}\) in \eqref{eq:thetacong} has the form \(2^kM\), where \(M\) is a positive odd
square-free integer.
\end{proposition}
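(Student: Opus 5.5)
The plan is to compute directly with the integral Weierstrass model \eqref{eq:thetacong} and show that every odd prime of bad reduction is a prime of multiplicative reduction. Multiplicative reduction gives conductor exponent exactly \(1\) at that prime, and then the conductor automatically has the stated shape \(2^kM\) with \(M\) odd and square-free.

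First I would factor the cubic as \(x\bigl(x-(r-s)\bigr)\bigl(x+(r+s)\bigr)\), exactly as in the twisted form displayed before Theorem~\ref{thm:fujiwara} with \(d=1\). Its roots \(0,\ r-s,\ -(r+s)\) have pairwise differences \(r-s\), \(r+s\) and \(2r\). The discriminant of the model is therefore
\[
\Delta=16\,(r-s)^2(r+s)^2(2r)^2=64\,r^2(r^2-s^2)^2 .
\]
Every odd prime \(p\nmid\Delta\) is a prime of good reduction and does not divide the conductor. So it suffices to treat odd primes \(p\mid r(r-s)(r+s)\).

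The key step is a coprimality observation. Since \((r,s)=1\) and \(p\) is odd, \(p\) divides exactly one of \(r\), \(r-s\), \(r+s\):
\begin{itemize}
\item if \(p\mid r\) and \(p\mid r\mp s\), then \(p\mid s\);
\item if \(p\mid r-s\) and \(p\mid r+s\), then \(p\mid 2r\) and \(p\mid 2s\).
\end{itemize}
Both contradict \((r,s)=1\). Hence modulo \(p\) exactly two of the three roots coincide and the third is distinct. The reduced cubic therefore has a double root but no triple root, so the singular point of the reduction is a node.

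To conclude that this gives multiplicative reduction, I must make sure the model is minimal at \(p\) (equivalently \(p\nmid c_4\)). For a model \(y^2=(x-e_1)(x-e_2)(x-e_3)\) one has
\[
c_4=16\bigl(e_1^2+e_2^2+e_3^2-e_1e_2-e_1e_3-e_2e_3\bigr)=8\bigl((e_1-e_2)^2+(e_1-e_3)^2+(e_2-e_3)^2\bigr).
\]
Modulo \(p\), exactly one of the squared differences vanishes and the other two are equal and nonzero. So \(c_4\equiv 16\,\delta^2\not\equiv0\pmod p\), where \(\delta\) is the common nonzero difference. Thus \(p\nmid c_4\), the model is minimal at \(p\), and the reduction is multiplicative, so \(\operatorname{ord}_p(\text{conductor})=1\).

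Collecting these, the conductor equals \(2^k M\), where \(M\) is the product of the distinct odd primes dividing \(r(r^2-s^2)\). This \(M\) is odd and square-free, while the exponent \(k\) at \(2\) is left unrestricted. The only point needing care is the \(c_4\) check, which guarantees minimality at \(p\) and hence that the node really reflects multiplicative rather than additive reduction. It is routine once the single-collision structure from the coprimality argument is in hand.
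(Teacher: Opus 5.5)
Your proof is correct and follows the paper's argument essentially step for step. You compute the discriminant \(64r^2(r^2-s^2)^2\), use coprimality to force exactly one double root modulo each odd bad prime, and check \(p\nmid c_4\) (your root-difference formula reproduces the paper's \(c_4=16(3r^2+s^2)\)) to get \(p\)-minimality and hence multiplicative reduction. One small wording fix: \(p\nmid c_4\) is sufficient for minimality at \(p\) but not equivalent to it, though your argument only uses the sufficient direction.
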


\begin{proof}
The discriminant of the displayed model is
\(64r^2(r^2-s^2)^2\).  Thus an odd bad prime $p$ divides one of
\(r,r-s,r+s\).  Modulo $p$, the cubic
\(x(x-(r-s))(x+(r+s))\) has respectively the roots
\[
 0,-s,-s;\qquad 0,0,-2s;\qquad 0,-2s,0.
\]
Because \((r,s)=1\) and the prime is odd, each reduction has one double
root and one simple root.
Moreover, the $c_4$ invariant $16(3r^2 + s^2)$ of \eqref{eq:thetacong} is nonzero modulo $p$: if \(p\mid r\), then \(c_4\equiv16s^2\pmod p\), while if \(p\mid r-s\) or \(p\mid r+s\), then \(c_4\equiv64s^2\pmod p\). Hence the displayed model is \(p\)-minimal and has nodal reduction, so \(E_{r,s}\) has multiplicative reduction at \(p\). Therefore its conductor exponent at every odd bad prime is \(1\).
All other odd primes have good reduction, so the odd part of the conductor is square-free.
\end{proof}

\begin{rmk}
\label{rmk:nonzero-cosine-level}
The odd part in Proposition~\ref{prop:conductor-odd-squarefree} is
nontrivial whenever \(s\neq0\). Indeed, the proof shows that it is
\(M=\prod_{p\mid r(r-s)(r+s),\,p\text{ odd}}p\).
If \(M=1\), the positive integers \(r,r-s,r+s\) are powers of \(2\).
If \(r\) is odd, then \(r=1\), and \(|s|<r\) gives \(s=0\).
If \(r\) is even, coprimality makes \(s\) odd, so \(r-s\) and
\(r+s\) are odd. As powers of \(2\), they must both equal \(1\),
again giving \(s=0\). Thus \(M>1\) for every nonzero rational cosine.
The characters \(\chi_\delta\), \(\delta\in\{1,-1,2,-2\}\), are
unramified at odd primes, so all four base twists have the same odd
conductor part \(M\). In particular, their levels are nonsquares,
and the level hypothesis in Section~\ref{sec:st-construction}
is automatic.
\end{rmk}

\section{Central \texorpdfstring{\(L\)}{L}-values of quadratic twists}
\label{sec:st-construction}

In this section we specialize the generalized theta series construction of Sirolli--Tornar\'ia \cite{ST} to newforms of weight \(2\) over \(\mathbb Q\). Note that we use the classical normalization of the \(L\)-function, in which the central point for a form of weight \(2\) is \(s=1\). Thus the value written as \(L(1/2,f)\) in the normalization of \cite{ST} is denoted here by \(L(f,1)\).

\subsection{Preliminaries}
\label{subsec:st-preliminaries}

Let \(f(q)=\sum_{n\geq1}a_f(n)q^n\in S_2(\Gamma_0(N))\) be a normalized newform with trivial character. Every level used below has the form \(N=2^kM\), where \(M>1\) is odd and square-free. For \(t\in\mathbb Q^\times\), let \(\chi_t\) be the primitive quadratic character of its squareclass, trivial when \(t\) is a square. Here \(f\otimes\chi_t\) denotes the associated primitive newform twist. We include \(1\) among the fundamental discriminants. Put \(\Sigma_N=\{p:p\mid N\}\cup\{\infty\}\).  For \(p\mid N\), let \(\epsilon_f(p)\in\{\pm1\}\) be the Atkin--Lehner eigenvalue of \(f\), and set \(\epsilon_f(\infty)=-1\).

At the real place put \(\left(\frac D\infty\right)=\operatorname{sgn}(D)\). A \emph{type} is a function \(\gamma:\Sigma_N\to\{\pm1\}\).  A fundamental discriminant \(D\) is of type \(\gamma\) if, for every
\(v\in\Sigma_N\),
\begin{equation}
  \left(\frac{D}{v}\right)
  =
  \begin{cases}
    \gamma(p)\text{ or }0,
      &v=p\text{ is odd},\ p\parallel N,
       \text{ and }\gamma(p)=\epsilon_f(p),\\
    \gamma(v),&\text{otherwise}.
  \end{cases}
  \label{eq:type-definition}
\end{equation}
The root number is constant on each type and equals
\begin{equation}
  \epsilon_{f,\gamma}
  =\epsilon_f(\infty)\gamma(\infty)
   \prod_{p\mid N}\epsilon_f(p)
   \gamma(p)^{\operatorname{ord}_p(N)}.
  \label{eq:type-root-number}
\end{equation}
If \(\epsilon_{f,\gamma}=-1\), then \(L(f\otimes\chi_D,1)=0\) for every \(D\) of type \(\gamma\).  We therefore consider only types with \(\epsilon_{f,\gamma}=1\).

\begin{lemma}
\label{lem:four-base-twists-cover}
Let \(d\) be a nonzero square-free integer. Write \(d=\sigma 2^e m\), where \(\sigma\in\{1,-1\}\), \(e\in\{0,1\}\), and \(m\) is positive, odd, and square-free. Set
\[
  \eta_m=(-1)^{(m-1)/2},\qquad
  D_0=\eta_m m,\qquad
  \delta=\sigma 2^e\eta_m.
\]
Then \(D_0\) is an odd fundamental discriminant and $f\otimes \chi_d = f_\delta \otimes \chi_{D_0}$ where $f_\delta = f\otimes \chi_\delta$.
Also, $D_0$ is of type $\gamma$ with respect to $f_\delta$ for some type $\gamma$.
\end{lemma}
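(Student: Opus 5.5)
The plan has three parts: check directly that \(D_0\) is an odd fundamental discriminant, identify \(f\otimes\chi_d\) with \(f_\delta\otimes\chi_{D_0}\) through the character identity \(\chi_d=\chi_\delta\chi_{D_0}\), and then read off a type \(\gamma\) from the local symbols of \(D_0\).

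\textbf{Fundamental discriminant.} Since \(m\) is odd, \(\eta_m m\equiv 1\pmod 4\). Indeed, \(m\equiv1\pmod4\) gives \(\eta_m=1\), and \(m\equiv3\pmod4\) gives \(\eta_m=-1\) with \(-m\equiv1\pmod4\). Thus \(D_0\) is a square-free integer congruent to \(1\) modulo \(4\), hence an odd fundamental discriminant. The case \(m=1\) gives \(D_0=1\), which we include by convention.

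\textbf{Twist identity.} By construction \(\delta D_0=\sigma2^e\eta_m^2m=d\), so the squareclass of \(d\) equals that of \(\delta D_0\). Primitive quadratic characters are multiplicative in squareclasses, since each corresponds to \(\mathbb Q(\sqrt t)\) via class field theory, i.e.\ to the Hilbert symbol \((t,\cdot)\) adelically. Hence \(\chi_d=\chi_\delta\chi_{D_0}\) as primitive characters. The forms \(f\otimes\chi_d\) and \((f\otimes\chi_\delta)\otimes\chi_{D_0}\) then have the same Hecke eigenvalues at all primes outside a finite set. By strong multiplicity one, the associated primitive newforms coincide. Equivalently, the automorphic representations satisfy \(\pi_f\otimes\chi_d\cong(\pi_f\otimes\chi_\delta)\otimes\chi_{D_0}\), and the newform is determined by the representation. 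This gives \(f\otimes\chi_d=f_\delta\otimes\chi_{D_0}\).

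\textbf{Existence of a type.} I would define \(\gamma\) on \(\Sigma_{N_\delta}\), where \(N_\delta\) is the level of \(f_\delta\), as follows:
\begin{itemize}
\item \(\gamma(\infty)=\operatorname{sgn}(D_0)\);
\item \(\gamma(2)=\bigl(\frac{D_0}{2}\bigr)\) if \(2\mid N_\delta\), which is \(\pm1\) because \(D_0\) is odd;
\item \(\gamma(p)=\bigl(\frac{D_0}{p}\bigr)\) at each odd \(p\mid N_\delta\) with \(p\nmid D_0\).
\end{itemize}

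At an odd \(p\mid N_\delta\) with \(p\mid D_0\), the symbol is \(0\), which is not allowed in the "otherwise" clause of \eqref{eq:type-definition}. There I set \(\gamma(p)=\epsilon_{f_\delta}(p)\) and must check that \(p\parallel N_\delta\). By Remark~\ref{rmk:nonzero-cosine-level}, the odd part of every level considered is square-free, and \(\chi_\delta\) is unramified at odd primes. So \(N_\delta\) has square-free odd part, which gives \(p\parallel N_\delta\) and makes the first clause apply. For the general statement I would invoke the standing hypothesis \(N=2^kM\) with \(M\) square-free, since twisting by \(\chi_\delta\) does not change the odd part of the level.

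The main subtlety is exactly this case \(p\mid\gcd(D_0,N_\delta)\). It is handled by choosing \(\gamma(p)\) to match the Atkin--Lehner sign, which requires the odd part of \(N_\delta\) to be square-free. The prime \(2\) causes no trouble because \(D_0\) is odd.
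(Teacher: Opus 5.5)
Your proposal is correct and follows the paper's proof essentially step for step: \(D_0\equiv1\pmod 4\) makes it an odd fundamental discriminant, \(\delta D_0=d\) gives \(\chi_d=\chi_\delta\chi_{D_0}\) and hence the twist identity, and the type is read off from the local symbols of \(D_0\), with the Atkin--Lehner sign chosen at odd bad primes dividing \(D_0\). Your appeal to strong multiplicity one and your check that \(p\parallel N_\delta\) at such primes spell out details that the paper's proof leaves implicit.
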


\begin{proof}
An odd square-free integer is a fundamental discriminant precisely when it is congruent to \(1\pmod 4\). By the definition of \(\eta_m\), we have \(D_0=\eta_m m\equiv1\pmod4\), so \(D_0\) is an odd fundamental discriminant. Moreover,
\(
  \delta D_0
  =\sigma2^e\eta_m^2m
  =\sigma2^em
  =d.
\)
It follows that
\(\chi_d=\chi_\delta\chi_{D_0}\), and hence
\(f\otimes\chi_d=f_\delta\otimes\chi_{D_0}\).
The odd part of the level of \(f_\delta\) is square-free. Since \(D_0\) is odd, its local symbols determine a type by \eqref{eq:type-definition}: at an odd bad prime dividing \(D_0\), choose the Atkin--Lehner sign, and use the corresponding symbol at every other place.
\end{proof}

Thus every quadratic twist of \(f\) can be written as an odd fundamental discriminant twist of one of the four fixed forms
\begin{equation}
    f_\delta, \quad \delta\in\{1,-1,2,-2\}.
    \label{eq:four-twists}
\end{equation}

\subsection{Quaternionic data and the Brandt module}
\label{subsec:st-brandt}

The construction of the generalized theta series begins by passing from the newform of weight \(2\) to its quaternionic counterpart via the Jacquet--Langlands correspondence \cite{JL}. We realize this correspondence explicitly using a Brandt module.
Fix one of the four base twists in \eqref{eq:four-twists}, denote it again by \(f\), and write its level as \(N=2^kM\), where \(M\) is odd and square-free. Fix a type \(\gamma\) with \(\epsilon_{f,\gamma}=1\).  
Choose an odd prime \(p_0\mid M\), let \(B/\mathbb Q\) be the definite quaternion algebra ramified exactly at \(p_0\) and \(\infty\), and let \(R\subset B\) be a Pizer--Eichler order of reduced discriminant \(N\), as in \cite[\S9]{ST}. For the families \eqref{eq:cos-values} considered in this paper, we take \(p_0=5\) when \(\cos\theta=\pm3/5\), and \(p_0=3\) when \(\cos\theta=\pm4/5\).

We use the standard Brandt module construction associated with \(R\).
See \cite[\S3]{ST} and \cite[pp.~116--139]{Gr}.
Let 
\begin{equation}\label{eq:ideal-classes}
    \operatorname{Cl}(R) = \{[I_1], \dots, [I_h]\}
\end{equation}
denote the set of locally principal right \(R\)-ideal classes. For each \(i\), let \(O_L(I_i)=\{b\in B:bI_i\subseteq I_i\}\) be the left order of \(I_i\), and put \(t_i=\#(O_L(I_i)^\times/\{\pm1\})\). The Brandt module \(\mathcal M(R)\) is the space of complex-valued functions on \(\operatorname{Cl}(R)\), or equivalently the complex vector space having basis \([I_1],\ldots,[I_h]\).
There is a pairing on $\mathcal{M}(R)$ given by
\begin{equation}
  \langle\varphi,\psi\rangle_B
  =
  \sum_{i=1}^h
  \frac{\varphi(I_i)\overline{\psi(I_i)}}{t_i}.
  \label{eq:brandt-pairing}
\end{equation}

Since \(f\) is a normalized newform, for each prime \(p\nmid N\) we can write $T_pf=a_f(p)f$ for the Hecke action. Let \(B(p)\) denote the Brandt matrix representing the action of \(T_p\) on \(\mathcal M(R)\).
By the Jacquet--Langlands correspondence, there is a common Hecke eigenline in \(\mathcal M(R)\) corresponding to \(f\), with the same eigenvalues \(a_f(p)\) for \(p\nmid N\). We therefore choose a nonzero vector \(\varphi_f\in\mathcal M(R)\) satisfying $B(p)\varphi_f=a_f(p)\varphi_f$ for $p \nmid N$.
In all cases considered here, finitely many of these equations determine a one-dimensional rational subspace. We choose its generator with primitive integral coordinates and write
\begin{equation*}\varphi_f=
    \left(\varphi_f(I_1),\ldots,\varphi_f(I_h)\right).
\end{equation*}

The order \(R\), the ideal-class representatives \(I_i\), and the Brandt matrices \(B(p)\) were computed using SageMath \cite{Sage}.
The vector \(\varphi_f\) was then obtained by solving the corresponding eigenspace equations over \(\mathbb Q\).

\subsection{The generalized theta series: structure and simplification}
\label{subsec:theta_series_reduction}

We first record the structure of the generalized theta series.
Choose an auxiliary parameter \(l\), either \(l=1\) or a fundamental discriminant with \(|l|\) an odd prime not dividing $N$, satisfying
\begin{equation}
  \gamma(\infty)l<0,\qquad
  \left(\frac{l}{p}\right)=
  \begin{cases}
    -\gamma(p),&p=p_0,\\
    \gamma(p),&p\mid N,\ p\neq p_0,
  \end{cases}
  \qquad
  L(f\otimes\chi_l,1)\neq0.
  \label{eq:auxiliary-local-conditions}
\end{equation}
Put \(\ell=|l|\). Such a choice is made for each of the finite types treated here; the general construction also allows composite auxiliary discriminants.
We also define
\[
  S_{\mathrm{II}}(\gamma)
  =
  \left\{
    q:q\text{ is prime},\ q\mid N,
    \gamma(q)^{\operatorname{ord}_q(N)}
    \neq\epsilon_f(q)
  \right\}.
\]
For each ideal class \([I_i]\) in \eqref{eq:ideal-classes}, Subsections~\ref{subsec:st-local-weights} and~\ref{subsec:typeII-computation} construct a positive-definite ternary quadratic form \(Q_i\), a first-kind weight function \(\Omega^{\mathrm{I}}_{i,\ell}\), and second-kind weight functions \(\Omega^{\mathrm{II}}_{i,q}\) for \(q\in S_{\mathrm{II}}(\gamma)\). The form \(Q_i\) and the weight functions are defined on \(\mathbb Z^3\). Next, put
\begin{equation}
\label{eq:weight-function}
    W_{i,\gamma}(v)
  =
  \Omega^{\mathrm{I}}_{i,\ell}(v)
  \prod_{q\in S_{\mathrm{II}}(\gamma)}
  \Omega^{\mathrm{II}}_{i,q}(v),
\end{equation}
where an empty product is understood to be \(1\), and define
\begin{equation*}
  \Theta_{i,\gamma}(q)
  =
  \sum_{n\geq0}
  \left(
    \sum_{\substack{v\in\mathbb Z^3\\Q_i(v)=\ell n}}
    W_{i,\gamma}(v)
  \right)q^n.
  \end{equation*}
The generalized theta series attached to \(f\) and \(\gamma\) is then defined by
\begin{equation}
  G_{f,\gamma}(q)
  =
  \sum_{i=1}^h
  \frac{\varphi_f(I_i)}{t_i}\,
  \Theta_{i,\gamma}(q).
  \label{eq:global-theta-shape}
\end{equation}
The individual constant terms are retained.  They vanish unless
\(l=1\) and \(S_{\mathrm{II}}(\gamma)=\varnothing\); in that case they
cancel in \(G_{f,\gamma}\), since the cuspidal Brandt vector satisfies
\(\sum_i\varphi_f(I_i)/t_i=0\).

After \(G_{f,\gamma}\) has been constructed, we simplify its expression by combining equivalent ideal-class contributions. Whenever two series $\Theta_{i, \gamma}(q)$ and $\Theta_{j,\gamma}(q)$ become identical after a unimodular change of variables, they are combined into one term. For the remaining terms, we compare their Fourier coefficients through the relevant Sturm bound. The comparisons include the constant term and use a common weight, level, and character. If two such series are proportional through this bound, then we retain a single representative and absorb the constant of proportionality into its coefficient. In this way, \(G_{f,\gamma}\) is expressed using one representative from each resulting proportionality class.

\subsection{Ternary lattices and the first-kind weight}
\label{subsec:st-local-weights}

We now construct the quadratic forms \(Q_i\) and the first-kind weights \(\Omega^{\mathrm{I}}_{i,\ell}\) occurring in \eqref{eq:weight-function}. We write \(\operatorname{trd}\) and \(\operatorname{nrd}\) for the reduced trace and reduced norm on \(B\), respectively. Thus, if \(x\mapsto\bar x\) denotes the standard involution of \(B\), then \(\operatorname{trd}(x)=x+\bar x\) and \(\operatorname{nrd}(x)=x\bar x\). Also, let \(\beta(X,Y)=\operatorname{nrd}(X+Y)-\operatorname{nrd}(X)-\operatorname{nrd}(Y)\) be the symmetric bilinear form associated with the reduced norm. For a \(\mathbb Z\)-lattice \(\Lambda\subset B\), write \(\Lambda^0=\{x\in\Lambda:\operatorname{trd}(x)=0\}\).

For each class \([I_i]\), choose an integral representative \(J_i\subset R\) of the same right ideal class.  When \(\ell>1\), we further require \(J_i\otimes_{\mathbb Z}\mathbb Z_\ell =R\otimes_{\mathbb Z}\mathbb Z_\ell\) and \(\left(\frac{N(J_i)}{\ell}\right)=1\), where \(N(J_i)\) is the positive generator of the reduced norm ideal of \(J_i\). Such representatives exist by weak approximation: use local principality and \(R\otimes\mathbb Z_\ell\simeq M_2(\mathbb Z_\ell)\) to impose these conditions at \(\ell\), then clear denominators by an integer prime to \(\ell\).
Let \(R_i=O_L(J_i)\) and \(L_i=(\mathbb Z+2R_i)^0\). \(L_i\) is the rank-three lattice over which the \(i\)th theta series is summed.
Choose a basis \(\mathcal E_i = (e_{i1},e_{i2},e_{i3})\) of \(L_i\), and use this basis for all quadratic forms and local weights below.
For \(v=(x,y,z)^t\in\mathbb Z^3\), put \(X_i(v)=xe_{i1}+ye_{i2}+ze_{i3}\) and \(Q_i(v)=\operatorname{nrd}(X_i(v))\).

We now define the first-kind weight function. If \(l=1\), the first-kind weight is trivial, and we set \(\Omega^{\mathrm{I}}_{i,1}(v)=1\). Assume henceforth that \(\ell>1\).  Fix a \(\mathbb Z\)-basis \(\mathcal E_R=(b_1,b_2,b_3)\) of \(R^0\).  By the normalization of \(J_i\), and since \(\ell\) is odd, \(L_i\otimes_{\mathbb Z}\mathbb Z_\ell =R^0\otimes_{\mathbb Z}\mathbb Z_\ell\).
Consequently, the coordinates \([e_{ij}]_{\mathcal E_R}\) lie in \(\mathbb Z_\ell^3\). Let \(C_i\in\operatorname{GL}_3(\mathbb F_\ell)\) be the reduction modulo \(\ell\) of the matrix 
\(\bigl([e_{i1}]_{\mathcal E_R}\ 
        [e_{i2}]_{\mathcal E_R}\ 
        [e_{i3}]_{\mathcal E_R}\bigr)\).
Therefore, if \(\bar v\in\mathbb F_\ell^3\) is the reduction of \(v\in\mathbb Z^3\), then \(C_i\bar v\) is the coordinate vector of the reduction of \(X_i(v)\) in $R^0/\ell R^0$ with basis induced from $\mathcal E_R$.
Choose \(b_0\in R^0\) whose reduction modulo \(\ell\) is nonzero and satisfies \(\operatorname{nrd}(b_0)\equiv0\pmod\ell\), and let \(c_0\in\mathbb F_\ell^3\) be the coordinate vector of this reduction with respect to \(\mathcal E_R\). Define \(S_i:\mathbb F_\ell^3\to\mathbb F_\ell\) by 
\[ S_i(x,y,z) = \overline{\beta(e_{i1},b_0)}x +\overline{\beta(e_{i2},b_0)}y +\overline{\beta(e_{i3},b_0)}z, 
\]
where the bars denote reduction modulo \(\ell\).
Then, the first-kind weight is defined by
\begin{equation}
  \Omega^{\mathrm{I}}_{i,\ell}(v)
  =
  \begin{cases}
    0,
      &\ell\nmid Q_i(v)\text{ or }\bar v=0,\\[1mm]
    \left(\dfrac{S_i(\bar v)}{\ell}\right),
      &\ell\mid Q_i(v),\ \bar v\neq0,
       \text{ and }S_i(\bar v)\neq0,\\[3mm]
    \left(\dfrac{k}{\ell}\right),
      &\ell\mid Q_i(v),\ \bar v\neq0,
       \text{ and }S_i(\bar v)=0,
  \end{cases}
  \label{eq:first-kind-coordinate-rule}
\end{equation}
where, in the last case, \(k\in\mathbb F_\ell^\times\) is the unique scalar satisfying \(C_i\bar v=kc_0\).
This is the first-kind weight of \cite[\S4.1, Proposition~4.2]{ST}, written in these coordinates.

\subsection{Second-kind weights}
\label{subsec:typeII-computation}

It remains to define the second-kind weights occurring in \eqref{eq:weight-function}.  For each odd prime \(q\in S_{\mathrm{II}}(\gamma)\), choose a Dirichlet character \(\psi_q\) of conductor \(q\) satisfying \(\psi_q(-1)=-1\). If \(2\in S_{\mathrm{II}}(\gamma)\), set \(\psi_2=\chi_{-4}\), the odd character of conductor \(4\). Extend each \(\psi_q\) by zero on nonunits, and put \(m_q=q\) for odd \(q\) and \(m_2=4\).

Let \(L=(\mathbb Z+2R)^0\). For \(q\in S_{\mathrm{II}}(\gamma)\), put \(B_q=B\otimes_{\mathbb Q}\mathbb Q_q\), and use the same notation \(\operatorname{trd}\), \(\operatorname{nrd}\), and \(\beta\) for their scalar extensions to \(B_q\). Choose \(z_q\in L\otimes_{\mathbb Z}\mathbb Z_q\) of unit reduced norm, using the local order descriptions and the second-kind construction in \cite[\S\S3--4]{ST}.
For each ideal class, local principality gives \(g_{i,q}\in B_q^\times\) such that \(J_i\otimes_{\mathbb Z}\mathbb Z_q =g_{i,q}(R\otimes_{\mathbb Z}\mathbb Z_q)\).
It follows that \(R_i\otimes_{\mathbb Z}\mathbb Z_q =g_{i,q}(R\otimes_{\mathbb Z}\mathbb Z_q)g_{i,q}^{-1}\), and hence \(z_{i,q}=g_{i,q}z_qg_{i,q}^{-1}\) belongs to \(L_i\otimes_{\mathbb Z}\mathbb Z_q\).
Extend \(X_i\) from Subsection~\ref{subsec:st-local-weights} \(\mathbb Z_q\)-linearly to \(\mathbb Z_q^3\), and define $\rho_{i, q}: \Z_q^3 \to \Z_q$ by
\begin{equation}
  \rho_{i,q}(v)
  =
  \frac{\beta(X_i(v),z_{i,q})}
       {2\operatorname{nrd}(z_{i,q})},
  \qquad v\in\mathbb Z_q^3.
  \label{eq:typeII-pairing}
\end{equation}
Since \(\beta(L_i,L_i)\subseteq2\mathbb Z\) and \(\operatorname{nrd}(z_{i,q})\) is a unit, \(\rho_{i,q}\) is \(\mathbb Z_q\)-valued. Let \(r_{i,q}:(\mathbb Z/m_q\mathbb Z)^3
\to\mathbb Z/m_q\mathbb Z\) be the linear form obtained by reducing \eqref{eq:typeII-pairing} modulo \(m_q\). The second-kind weight at \(q\) is defined by \(\Omega^{\mathrm{II}}_{i,q}(v) = \psi_q (r_{i,q}(v))\).

\subsection{The Sirolli--Tornar\'ia formula}
\label{subsec:st-theorem}

The construction of \(G_{f, \gamma}\) is now complete.  Write \(G_{f, \gamma}(q)=\sum_{m\geq1}a_\gamma(m)q^m\).  For an odd fundamental discriminant \(D\) of type \(\gamma\), the coefficient entering the central \(L\)-value formula is \(a_\gamma(|D|)\).
The following specializes \cite[Theorems~A--B, Propositions~2.3 and~5.1]{ST} to \(\mathbb Q\), with the holomorphic realization and the Petersson normalization used there.

\begin{theorem}[Sirolli--Tornar\'ia]
\label{thm:st-rational-specialization}
Let \(f\) and \(\gamma\) be as in Subsection~\ref{subsec:st-brandt}, and let \(l\) satisfy \eqref{eq:auxiliary-local-conditions}. Put \(\psi_\gamma=\prod_{q\in S_{\mathrm{II}}(\gamma)}\psi_q\).
Then the generalized theta series \(G_{f,\gamma}\) defined in \eqref{eq:global-theta-shape} is a nonzero holomorphic cusp form of weight \(3/2\) and level \(4\operatorname{lcm}\bigl(N,\operatorname{cond}(\psi_\gamma)^2\bigr)\). Its character is \(\psi_\gamma\chi_{-1}\) if \(\gamma(\infty)=1\), and \(\psi_\gamma\) if \(\gamma(\infty)=-1\).

For every odd fundamental discriminant \(D\) of type \(\gamma\),
\begin{equation}
  L(f\otimes\chi_l,1)L(f\otimes\chi_D,1)
  =
  2^{\omega(D,N)}c_{0,N}
  \frac{\langle f,f\rangle_{\mathrm{Pet}}}
       {\langle\varphi_f,\varphi_f\rangle_B}
  \frac{|a_\gamma(|D|)|^2}{\sqrt{|lD|}}.
  \label{eq:st-product-formula}
\end{equation}
Here,
\(\omega(D,N)=\#\{p:p\mid D\text{ and }p\mid N\}\),
\(\omega(N)=\#\{p:p\mid N\}\), and
\[
  c_{0,N}
  =
  2^{1-2\omega(N)}
  \prod_{p\mid N}(p+1)p^{\operatorname{ord}_p(N)-1}.
\]
Here \(l\) is the auxiliary parameter chosen in Subsection~\ref{subsec:theta_series_reduction};
\(\langle\cdot,\cdot\rangle_B\) is the pairing defined in
\eqref{eq:brandt-pairing}, and
\(\langle\cdot,\cdot\rangle_{\mathrm{Pet}}\) denotes the Petersson inner product in the normalization of \cite{ST}. The factor \(c_{0,N}\) is the specialization of \cite[(6.2)]{ST}; it must be changed if that inner product is rescaled.
Equivalently, \eqref{eq:st-product-formula} can be written as
\begin{equation*}
  L(f\otimes\chi_D,1)
  =
  2^{\omega(D,N)}
  \kappa(f,\gamma,l)
  \frac{|a_\gamma(|D|)|^2}{\sqrt{|D|}},
  \end{equation*}
where
\begin{equation}
\label{eq:st-final-formula-const}
  \kappa(f,\gamma,l)
  =
  \frac{c_{0,N}}
       {\sqrt{|l|}\,L(f\otimes\chi_l,1)}
  \frac{\langle f,f\rangle_{\mathrm{Pet}}}
       {\langle\varphi_f,\varphi_f\rangle_B}.
\end{equation}
In particular, \(L(f\otimes\chi_D,1)=0\) if and only if \(a_\gamma(|D|)=0\).
\end{theorem}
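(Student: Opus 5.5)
This theorem is presented as a specialization of \cite[Theorems~A--B, Propositions~2.3 and~5.1]{ST} to the base field \(\mathbb Q\). The plan is therefore to translate the hypotheses and normalizations of \cite{ST} into those of Section~\ref{sec:st-construction}, not to reprove Waldspurger-type formulas. Take \(F=\mathbb Q\) with its single real place, and the Hilbert newform of parallel weight \(2\) as our \(f\) of level \(N=2^kM\). The odd square-free hypothesis on \(M\), and the fact that \(N\) is a nonsquare (Remark~\ref{rmk:nonzero-cosine-level}), let us choose the definite quaternion algebra \(B\) ramified at \(\{p_0,\infty\}\); this is an even set, and \(p_0\parallel N\). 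Then \(R\) is the Pizer--Eichler order of \cite[\S9]{ST}. Next, check that our types \(\gamma\), the condition \eqref{eq:type-definition}, and the root number \eqref{eq:type-root-number} agree with the definitions in \cite{ST}. The key point is that the sign \(\epsilon_f(\infty)=-1\) encodes the local root number of a weight-\(2\) discrete series at \(\infty\).

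The main step is to match the construction. First verify that the auxiliary discriminant conditions \eqref{eq:auxiliary-local-conditions} are exactly the conditions in \cite{ST} making \(l\) of the ``complementary'' type relative to \(B\). These are: the sign flipped at \(p_0\), where \(B\) ramifies, and at \(\infty\), and kept elsewhere. Second, check that the weight \(W_{i,\gamma}\) of \eqref{eq:weight-function} is the product of local weights of \cite[\S4]{ST}. The first-kind factor is the coordinate form of \cite[Proposition~4.2]{ST}, as noted after \eqref{eq:first-kind-coordinate-rule}. The second-kind factors sit at the primes \(S_{\mathrm{II}}(\gamma)\) where \(\gamma(q)^{\operatorname{ord}_q N}\neq\epsilon_f(q)\), and are given by \(\psi_q\) evaluated at the pairing \eqref{eq:typeII-pairing}. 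Third, observe that the sum \eqref{eq:global-theta-shape} with weights \(\varphi_f(I_i)/t_i\) is the Brandt-module pairing of \(\varphi_f\) with the weighted theta kernel. Once this is done, \cite[Theorem~A]{ST} gives holomorphy and cuspidality, with weight \(3/2\) coming from the ternary lattice \(L_i\). The level follows because the theta kernel on \((\mathbb Z+2R_i)^0\) has level \(4N\), and twisting by \(\psi_\gamma\) raises it to \(4\operatorname{lcm}(N,\operatorname{cond}(\psi_\gamma)^2)\). The character is \(\psi_\gamma\) times the factor \(\chi_{-1}\) or trivial, according to the parity induced by \(\gamma(\infty)\); this is Proposition~2.3 of \cite{ST}. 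Nonvanishing follows from \cite[Theorem~B]{ST} together with \(L(f\otimes\chi_l,1)\neq0\), since the formula then forces some coefficient to be nonzero.

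For \eqref{eq:st-product-formula}, specialize \cite[Theorem~B and (6.2)]{ST}. Their discriminant factors become \(\sqrt{|lD|}\) and their local factors become \(2^{\omega(D,N)}\). Their constant becomes \(c_{0,N}\), obtained by evaluating the product of local volume factors \((p+1)p^{\operatorname{ord}_p N-1}/4\) and the global \(2\). Also convert \(L(1/2,\cdot)\) to the classical value \(L(\cdot,1)\), which introduces no extra factor at weight \(2\). The reformulation with \(\kappa(f,\gamma,l)\) is then simply division by \(\sqrt{|l|}\,L(f\otimes\chi_l,1)\neq0\). Since \(\kappa>0\), it also gives the equivalence \(L(f\otimes\chi_D,1)=0\iff a_\gamma(|D|)=0\).

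The main obstacle is bookkeeping of normalizations in the constant \(c_{0,N}\) and the Brandt pairing \(\langle\cdot,\cdot\rangle_B\). Factors of \(2\) arise from three sources: the lattice \(\mathbb Z+2R_i\) versus \(R_i\), the division of units by \(\pm1\) in \(t_i\), and the treatment of the prime \(2\) when \(k\geq2\), where \(R\) is not Eichler. For this step I would first compare against a known case: Gross's prime-level formula, or the \(40a1\)-type example of Section~\ref{sec:representative-40a1-computation}, checked numerically against computed central values.
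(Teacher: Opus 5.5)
Your route is the same as the paper's. The paper gives no argument beyond specializing the Sirolli--Tornar\'ia theorems to $\mathbb Q$, and your matching of types, root numbers, the auxiliary conditions (signs flipped at $p_0$ and $\infty$), the local weights, and the constant $c_{0,N}=2\prod_{p\mid N}(p+1)p^{\operatorname{ord}_p(N)-1}/4$ agrees with that. The passage to $\kappa(f,\gamma,l)$ and the vanishing equivalence are also fine.

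\textbf{Gap: nonvanishing of $G_{f,\gamma}$.} Given $L(f\otimes\chi_l,1)\neq0$, the product formula only says that, for $D$ of type $\gamma$, $a_\gamma(|D|)\neq0$ if and only if $L(f\otimes\chi_D,1)\neq0$. It yields a nonzero coefficient only once you know that \emph{some} odd fundamental discriminant $D$ of type $\gamma$ has $L(f\otimes\chi_D,1)\neq0$.

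The auxiliary $l$ cannot be that witness. Since $\gamma(\infty)l<0$ and $\bigl(\frac{l}{p_0}\bigr)=-\gamma(p_0)$, $l$ is never of type $\gamma$.

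Producing such a $D$ requires a genuine nonvanishing theorem for quadratic twists with prescribed local behaviour at the places of $\Sigma_N$, such as Waldspurger's or Friedberg--Hoffstein's. These apply because $\epsilon_{f,\gamma}=1$. Otherwise you must quote nonvanishing directly from Sirolli--Tornar\'ia. For the forms actually used, the paper also certifies nonvanishing by exhibiting nonzero coefficients in Appendix~\ref{app:fourier-data}.

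\textbf{A smaller point: the level.} Your derivation of the level is only a heuristic. The second-kind factor is a weight on lattice vectors, not a coefficientwise twist. Moreover, the fact that $\ell$ drops out of the level after restricting to $Q_i(v)=\ell n$ is part of what the cited theorem proves. So the level and character should be quoted from Sirolli--Tornar\'ia rather than derived from the twisting picture.
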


\section{A complete representative calculation:
the \texorpdfstring{\(40\mathrm{a}1\)}{40a1} case}
\label{sec:representative-40a1-computation}

We illustrate the construction of Section~\ref{sec:st-construction} in one complete example. Consider the base twist
\[
  E_{5,3}^{(-2)}:
  \quad
  y^2=x^3-12x^2-64x=x(x-16)(x+4),
\]
whose Cremona label is \(40\mathrm{a}1\). Let \(f=f_3^{(-2)}\) be its associated normalized newform, and choose the type given by $\gamma(2)=-1$, $\gamma(5)=-1$, and $\gamma(\infty)=1$.
The Atkin--Lehner signs are \(\epsilon_f(2)=1\) and \(\epsilon_f(5)=-1\). Since \(\operatorname{ord}_2(40)=3\), formula \eqref{eq:type-root-number} gives $\epsilon_{f, \gamma} = 1$.
Moreover, \(S_{\mathrm{II}}(\gamma)=\{2\}\).
We take \(l=-11\) and hence \(\ell=11\).

\subsection{Brandt and lattice data}

We use the quaternion algebra
\[
  B=\left(\frac{-2,-5}{\mathbb Q}\right)
  =\mathbb Q+\mathbb Qi+\mathbb Qj+\mathbb Qk,
  \qquad
  i^2=-2,\quad j^2=-5,\quad ij=k=-ji,
\]
and the Pizer--Eichler order
\(R=\langle\rho_1,\rho_2,\rho_3,\rho_4\rangle_{\mathbb Z}\), where
\[
  \rho_1=\frac{1+j+k}{2},\qquad
  \rho_2=\frac{i+5k}{2},\qquad
  \rho_3=j+k,\qquad
  \rho_4=4k.
\]
The Brandt module $\mathcal{M} (R)$ has dimension \(4\). The representatives of the right ideal classes used in the computation are
\begin{align*}
 J_1&=\langle4,\,16i,\,2+6i+2j,\,10i+2k\rangle_{\mathbb Z},\\
 J_2&=\langle12,\,48i,\,10+6i+2j,\,26i+2k\rangle_{\mathbb Z},\\
 J_3&=\langle12,\,4+16i,\,2+2i+6j,\,6+2j+2k\rangle_{\mathbb Z},\\
 J_4&=\langle12,\,8+16i,\,10+2i+6j,\,6i+4j+2k\rangle_{\mathbb Z}.
\end{align*}
Under the basis $\{[J_1], [J_2], [J_3], [J_4]\}$, we have $\varphi_f = (1, -1, 1, -1)$ and $t_i = 1$ for all $1 \le i \le 4$, and therefore \(\langle\varphi_f,\varphi_f\rangle_B=4\).

As in Subsection~\ref{subsec:st-local-weights}, let \(R_i=O_L(J_i)\) and \(L_i=(\mathbb Z+2R_i)^0\). For the subsequent computations, we use the following bases \(\mathcal E_i=(e_{i1},e_{i2},e_{i3})\) of \(L_i\):
\[
\begin{array}{c|ccc}
 i & e_{i1} & e_{i2} & e_{i3}\\
\hline
1
& j+k
& -3i-j
& -2i+j-k\\[1mm]
2
& -\frac23 i+j+\frac13 k
& -3i-j
& -\frac83 i+j-\frac53 k\\[1mm]
3
& -j-k
& 3i+\frac13 j-\frac23 k
& 2i-\frac53 j+\frac13 k\\[1mm]
4
& -i+\frac13 j+\frac23 k
& 2i-j+k
& 4i+\frac53 j+\frac13 k.
\end{array}
\]
With \(X_i(x,y,z)=xe_{i1}+ye_{i2}+ze_{i3}\), the corresponding quadratic forms \(Q_i=\operatorname{nrd}\circ X_i\) are
\begin{align*}
Q_1(x,y,z)
  &=15x^2+23y^2+23z^2-10xy-10xz+14yz,\\
Q_2(x,y,z)
  &=7x^2+23y^2+47z^2-2xy+6xz+22yz,\\
Q_3(x,y,z)
  &=15x^2+23y^2+23z^2+10xy+10xz+14yz,\\
Q_4(x,y,z)
  &=7x^2+23y^2+47z^2+2xy-6xz+22yz.
\end{align*}

\subsection{Local weights}
\label{subsec:local-weights}

For the first-kind weight, recall that \(l=-11\) and \(\ell=11\).
We use
\[
  \mathcal E_R=(b_1,b_2,b_3)
  =
  \left(
    \frac{i+5k}{2},\,j+k,\,4k
  \right)
\]
as a basis for \(R^0\). Choose 
\(b_0=-3j+k\in R^0\).  Its coordinate vector with respect to
\(\mathcal E_R\) is \((0,-3,1)^t\), so its reduction modulo \(11\) is
\[
  c_0=(0,8,1)^t\in\mathbb F_{11}^3.
\]
Moreover, \(\operatorname{nrd}(b_0)=55\equiv0\pmod{11}\), as required.
For \(\mathcal E_1=(j+k,-3i-j,-2i+j-k)\), the coordinate vectors of its basis elements with respect to \(\mathcal E_R\) are
\[
  (0,1,0)^t,\qquad
  (-6,-1,4)^t,\qquad
  (-4,1,2)^t.
\]
In this example, all of these coordinate vectors have integral entries; in general, we compute the coordinates over \(\mathbb Q\), regard them as elements of \(\mathbb Z_\ell^3\), and then reduce them modulo \(\ell\).
Reducing these columns modulo \(11\) gives \(C_1\).  Also,
\[
  \beta(e_{11},b_0)\equiv1,\qquad
  \beta(e_{12},b_0)\equiv8,\qquad
  \beta(e_{13},b_0)\equiv5
  \pmod{11},
\]
and hence \(S_1(x,y,z)=x+8y+5z\). The corresponding data for all four ideal classes are
\[
\begin{array}{c|c|c}
 i&C_i&S_i(x,y,z)\\
\hline
1&
\left(\begin{smallmatrix}
0&5&7\\
1&10&1\\
0&4&2
\end{smallmatrix}\right)
&x+8y+5z\\[3mm]
2&
\left(\begin{smallmatrix}
6&5&2\\
1&10&1\\
8&4&10
\end{smallmatrix}\right)
&6x+8y+10z\\[3mm]
3&
\left(\begin{smallmatrix}
0&6&4\\
10&4&2\\
0&7&9
\end{smallmatrix}\right)
&10x+6y+9z\\[3mm]
4&
\left(\begin{smallmatrix}
9&4&8\\
4&10&9\\
5&9&2
\end{smallmatrix}\right)
&7x+6y+8z
\end{array}
\qquad (\bmod\,11).
\]
The first-kind weight \(\Omega^{\mathrm{I}}_{i,11}\) is obtained from
\eqref{eq:first-kind-coordinate-rule}. In its last case, the second
coordinate gives \(k=7(C_i\bar v)_2\pmod{11}\), since \(8^{-1}=7\)
in \(\mathbb F_{11}\); the remaining coordinates verify
\(C_i\bar v=kc_0\).

For the second-kind weight, we have
\(S_{\mathrm{II}}(\gamma)=\{2\}\) and
\(\psi_2=\chi_{-4}\).  Put \(L=(\mathbb Z+2R)^0\) and choose
\(z_2=j+k=2\rho_1-1\in L\), for which
\(\operatorname{nrd}(z_2)=15\in\mathbb Z_2^\times\).
We choose \(g_{1,2}=4\) and \(g_{2,2}=g_{3,2}=g_{4,2}=12\).
Since these elements are central, conjugation fixes \(z_2\), so
\(z_{i,2}=z_2\) for every \(i\).  Its coordinate vectors with respect
to the chosen bases \(\mathcal E_i\) are
\[
\begin{aligned}
\mathbf z_{1,2}&=(1,0,0)^t,
&
\mathbf z_{2,2}&=\left(\frac43,0,-\frac13\right)^t,\\
\mathbf z_{3,2}&=(-1,0,0)^t,
&
\mathbf z_{4,2}&=\left(\frac43,0,\frac13\right)^t.
\end{aligned}
\]

Writing \(Q_i(v)=v^tA_iv\), the linear form
\(\rho_{i,2}\) in \eqref{eq:typeII-pairing} has coefficient vector
\(A_i\mathbf z_{i,2}/Q_i(\mathbf z_{i,2})\). For instance,
\[
  \frac{A_1\mathbf z_{1,2}}
       {Q_1(\mathbf z_{1,2})}
  =
  \begin{pmatrix}
    1\\ -1/3\\ -1/3
  \end{pmatrix}
  \equiv
  \begin{pmatrix}
    1\\1\\1
  \end{pmatrix}
  \pmod4.
\]
Reducing the corresponding linear forms modulo \(4\) gives
\[
\begin{aligned}
r_{1,2}(x,y,z)&=x+y+z,
&
r_{2,2}(x,y,z)&=x+y+z,\\
r_{3,2}(x,y,z)&=3x+y+z,
&
r_{4,2}(x,y,z)&=x+3y+3z
\end{aligned}
\qquad (\bmod\,4).
\]
Hence \(\Omega^{\mathrm{II}}_{i,2}(v)=\chi_{-4}(r_{i,2}(v))\), and the
total weight is
\(W_{i,\gamma}(v)=\Omega^{\mathrm{I}}_{i,11}(v)\chi_{-4}(r_{i,2}(v))\).

\subsection{Reduction of the formula}

We now combine the four ideal-class contributions to obtain a shorter representation formula for the Fourier coefficients.

For \(i=1,2,3,4\), let
\[
  \Theta_{i,\gamma}(q)
  =
  \sum_{n\geq0}
  \left(
    \sum_{\substack{v\in\mathbb Z^3\\Q_i(v)=11n}}
    W_{i,\gamma}(v)
  \right)q^n.
\]
Since
\(\varphi_f=(1,-1,1,-1)^t\) and \(t_i=1\) for all \(i\),
\(
  G_{f,\gamma}
  =
  \Theta_{1,\gamma}
  -\Theta_{2,\gamma}
  +\Theta_{3,\gamma}
  -\Theta_{4,\gamma}.
\)

To compare these four terms, consider the unimodular matrices
\[
  V_3=
  \begin{pmatrix}
    -1&0&0\\
    0&0&1\\
    0&1&0
  \end{pmatrix},
  \qquad
  V_4=
  \begin{pmatrix}
    -1&0&-1\\
    0&1&1\\
    0&0&-1
  \end{pmatrix}.
\]
They satisfy \(Q_3(V_3v)=Q_1(v)\) and \(Q_4(V_4v)=Q_2(v)\).
The same substitutions give \(W_{3,\gamma}(V_3v)=W_{1,\gamma}(v)\) and \(W_{4,\gamma}(V_4v)=W_{2,\gamma}(v)\). Since both matrices are unimodular, \(\Theta_{3,\gamma}=\Theta_{1,\gamma}\) and \(\Theta_{4,\gamma}=\Theta_{2,\gamma}\). Therefore, \(G_{f,\gamma}=2\Theta_{1,\gamma}-2\Theta_{2,\gamma}\).

The half-integral-weight level in this case is \(4\operatorname{lcm}(40,4^2)=320\). Hence the Sturm bound for weight \(3/2\) is
\[
  \left\lfloor
    \frac{3/2}{12}
    [\operatorname{SL}_2(\mathbb Z):\Gamma_0(320)]
  \right\rfloor
  =
  \frac18\cdot576
  =
  72.
\]
The Fourier coefficients of \(\Theta_{1,\gamma}\) and \(-\Theta_{2,\gamma}\) agree through \(q^{72}\). Sturm's theorem therefore gives \(\Theta_{1,\gamma}=-\Theta_{2,\gamma}\), and hence \(G_{f,\gamma}=4\Theta_{1,\gamma}\).

Writing \(G_{f,\gamma}(q)=\sum_{n\geq1}a_\gamma(n)q^n\), we obtain
\begin{equation*}
  a_\gamma(n)
  =
  4
  \sum_{\substack{(x,y,z)\in\mathbb Z^3\\Q_1(x,y,z)=11n}}
  \Omega^{\mathrm{I}}_{1,11}(x,y,z)\,
  \chi_{-4}(x+y+z),
  \end{equation*}
where
\[
  Q_1(x,y,z)
  =
  15x^2+23y^2+23z^2-10xy-10xz+14yz,
\]
and \(\Omega^{\mathrm{I}}_{1,11}\) is the first-kind weight determined by \(C_1\), \(S_1(x,y,z)=x+8y+5z\), and \(c_0=(0,8,1)^t\) as in Subsection~\ref{subsec:local-weights}. The initial Fourier expansion is
\[
\begin{aligned}
  G_{f,\gamma}(q)
  ={}&
  -8q^5+16q^{13}-16q^{37}+8q^{45}-16q^{53}
  +32q^{77}\\
  &+16q^{85}-32q^{93}+16q^{117}-8q^{125}
  +O(q^{151}).
\end{aligned}
\]

Finally, \(c_{0,40}=9\) and \(\langle\varphi_f,\varphi_f\rangle_B=4\).  
Since the auxiliary parameter is \(l=-11\), the constant in
\eqref{eq:st-final-formula-const} is
\[
  \kappa(f,\gamma,-11)
  =
  \frac{9}
       {4\sqrt{11}\,L(f\otimes\chi_{-11},1)}
  \langle f,f\rangle_{\mathrm{Pet}}.
\]
For every positive odd fundamental discriminant \(D\) of type \(\gamma\), equivalently satisfying
\[
  \left(\frac{D}{2}\right)=-1,
  \qquad
  \left(\frac{D}{5}\right)\in\{-1,0\},
\]
Theorem~\ref{thm:st-rational-specialization} gives
\[
  L(f\otimes\chi_D,1)
  =
  2^{\omega(D,40)}
  \kappa(f,\gamma,-11)
  \frac{|a_\gamma(D)|^2}{\sqrt D}.
\]

\section{Explicit generalized theta series}
\label{sec:explicit-series}

We apply Section~\ref{sec:st-construction} to
\(E_{5,3}:y^2=x(x-2)(x+8)\) and
\(E_{5,4}:y^2=x(x-1)(x+9)\).
Since \(E_{5,-j}=E_{5,j}^{(-1)}\), the negative-cosine cases belong to
these same twist families.  Recall that \(f_j^{(\mu)}\) is attached to
\(E_{5,j}^{(\mu)}\), for \(j\in\{3,4\}\) and
\(\mu\in\{1,-1,2,-2\}\), and that its level is \(N_j^{(\mu)}\).
Table~\ref{tab:base-twists} records these levels and the ramified primes.

We abbreviate \(G_{f_j^{(\mu)},\gamma}\) as
\(G_{3;\gamma_2,\gamma_5}^{(\mu)}\) or
\(G_{4;\gamma_2,\gamma_3,\gamma_5}^{(\mu)}\), where
\(\gamma_p=\gamma(p)\).  The finite signs and the base twist determine
\(\gamma(\infty)\) through \eqref{eq:type-root-number}, with root number
\(+1\).  Tables~\ref{tab:metadata-three} and~\ref{tab:metadata-four}
record the auxiliary parameter \(l\), Nebentypus, a level \(N_{3/2}\),
and the admissible classes of \(m=|D|\).
These classes select the type of an odd fundamental discriminant.
The heading \(d=m,2m,-m\), or \(-2m\) specifies the full twist
\(E_{5,j}^{(d)}\), whereas the superscript \((\mu)\) specifies the base
newform.  Section~\ref{sec:tunnell-criteria} gives the row-selection rule.

All coefficients lie in \(\mathbb Q(i)\), with rational coefficients
in some cases.  We use \(\psi_2=\chi_{-4}\),
\(\psi_3(a)=\left(\frac a3\right)\), and the character modulo \(5\)
defined by \(\psi_5(1)=1\), \(\psi_5(2)=i\),
\(\psi_5(3)=-i\), and \(\psi_5(4)=-1\).
Each character is periodic and zero on nonunits.

Tables~\ref{tab:three-fifths-summands} and
\ref{tab:four-fifths-summands} give the representation formulas.
In the \(Q\) column,
\(\left(\begin{smallmatrix}a&b&c\\r&s&t\end{smallmatrix}\right)\)
denotes
\(Q(x,y,z)=ax^2+by^2+cz^2+rxy+sxz+tyz\).
For \(p\in\{2,3,5\}\), an entry
\(\boldsymbol u_p=(u_{p,1},u_{p,2},u_{p,3})\) denotes the
second-kind factor
\[
  \psi_p(\boldsymbol u_p\cdot v),
  \qquad
  \boldsymbol u_p\cdot v
  =u_{p,1}x+u_{p,2}y+u_{p,3}z,
  \qquad
  v=(x,y,z)^t.
\]
An entry \(1\) means that the corresponding factor is absent.

The first-kind column gives \(\boldsymbol w\) above
\(\boldsymbol d\), where
\(S(v)=\boldsymbol w\cdot v\pmod\ell\) and
\(\boldsymbol d=C^{-1}c_0\).
Hence the last case of
\eqref{eq:first-kind-coordinate-rule} is equivalently
\(\bar v=k\boldsymbol d\).
Thus \(Q\), \(\ell=|l|\), \(\boldsymbol w\), and
\(\boldsymbol d\) determine the first-kind weight, which we denote by
\(\Omega^{\mathrm I}_{\ell;\boldsymbol w,\boldsymbol d}\).

A summand row contributes
\begin{equation*}
  c\!\sum_{\substack{v\in\mathbb Z^3\\Q(v)=\ell m}}
  \Omega^{\mathrm I}_{\ell;\boldsymbol w,\boldsymbol d}(v)
  \prod_{p\in\{2,3,5\}}
  \psi_p(\boldsymbol u_p\cdot v)
  \end{equation*}
to the coefficient of \(q^m\), with absent factors omitted.
The \(\psi_3\) factor is absent throughout
Table~\ref{tab:three-fifths-summands}.
All rows of a form block are added; a blank Form entry continues the
preceding block.

\begin{proposition}[Explicit generalized theta series]
\label{prop:computed-series}
Each row of Tables~\ref{tab:metadata-three} and~\ref{tab:metadata-four}
corresponds to the nonzero form \(G_{f_j^{(\mu)},\gamma}\) constructed
in Section~\ref{sec:st-construction}.  Its positive-index Fourier
coefficients are the sums in its block of
Table~\ref{tab:three-fifths-summands} or
Table~\ref{tab:four-fifths-summands}, and it has the listed level and
character.  The listed level need not be minimal.
\end{proposition}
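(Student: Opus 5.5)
This proposition is a verification statement: each table row records the output of the construction of Section~\ref{sec:st-construction} for one pair \((f_j^{(\mu)},\gamma)\). I would prove it row by row, following the template of the \(40\mathrm{a}1\) computation in Section~\ref{sec:representative-40a1-computation}.

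For each base twist \(f=f_j^{(\mu)}\), with level from Table~\ref{tab:base-twists}, I would proceed as follows.
\begin{enumerate}[(i)]
\item Read off the Atkin--Lehner signs and list the finite sign patterns. Determine \(\gamma(\infty)\) from \eqref{eq:type-root-number} with \(\epsilon_{f,\gamma}=1\).
\item For each resulting type, check that the listed \(l\) satisfies \eqref{eq:auxiliary-local-conditions}. The sign and Legendre symbol conditions are finite checks. The condition \(L(f\otimes\chi_l,1)\neq0\) can be certified rigorously, since the twist \(f\otimes\chi_l\) corresponds to an elliptic curve of analytic rank zero. This can be confirmed by a modular-symbol computation of \(L/\Omega\), which is a nonzero rational number.
\item Compute the Pizer--Eichler order \(R\) of reduced discriminant \(N\) in the algebra ramified at \(p_0\) and \(\infty\), the class representatives \(J_i\) normalized at \(\ell\), and the Brandt matrices. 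Solve for \(\varphi_f\) using enough Hecke operators to cut out a one-dimensional rational eigenline. By Jacquet--Langlands, this line is exactly the one attached to \(f\).
\item Compute the lattices \(L_i\), the forms \(Q_i\), the data \(C_i\), \(S_i\), \(c_0\), and the second-kind forms \(r_{i,q}\) for \(q\in S_{\mathrm{II}}(\gamma)\).
\end{enumerate}

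Theorem~\ref{thm:st-rational-specialization} then yields the following for each row:
\begin{enumerate}[(i)]
\item \(G_{f,\gamma}\) as in \eqref{eq:global-theta-shape} is a nonzero cusp form.
\item Its weight is \(3/2\) and its level is \(4\operatorname{lcm}(N,\operatorname{cond}(\psi_\gamma)^2)\).
\item Its character is \(\psi_\gamma\chi_{-1}\) or \(\psi_\gamma\), according to \(\gamma(\infty)\).
\end{enumerate}
Comparing these with the table entries proves the level and character claims. The tables may list a multiple of this level, which is harmless because the level need not be minimal. Nonvanishing is also supplied by the theorem, given \(L(f\otimes\chi_l,1)\neq0\). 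As a sanity check, I would also exhibit a nonzero coefficient.

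The remaining claim is that the positive-index coefficients equal the block sums in Tables~\ref{tab:three-fifths-summands} and~\ref{tab:four-fifths-summands}. I would reduce \(\sum_i\varphi_f(I_i)t_i^{-1}\Theta_{i,\gamma}\) in two stages.
\begin{enumerate}[(i)]
\item \textbf{Unimodular equivalences.} Find matrices \(V\in\operatorname{GL}_3(\mathbb Z)\) with \(Q_{i'}(Vv)=Q_i(v)\) and \(W_{i',\gamma}(Vv)=W_{i,\gamma}(v)\). Each such pair gives the exact identity \(\Theta_{i',\gamma}=\Theta_{i,\gamma}\), which is verifiable by finite linear algebra modulo \(\ell\) and \(m_q\).
\item \textbf{Proportionalities.} For the remaining summands, compare coefficients through the Sturm bound for the common level and character. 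Each \(\Theta_{i,\gamma}\) is individually a modular form of weight \(3/2\) there, being a weighted theta series with the same local weight structure. Agreement through the bound therefore gives equality of forms, just as \(\Theta_{1,\gamma}=-\Theta_{2,\gamma}\) was obtained in the \(40\mathrm{a}1\) case.
\end{enumerate}
Absorbing the constants yields the listed coefficients \(c\). Writing each first-kind weight via \(\boldsymbol w\) and \(\boldsymbol d=C^{-1}c_0\) is a change of notation, justified by the equivalence \(C_i\bar v=kc_0\iff\bar v=k\boldsymbol d\).

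The main obstacle is the proportionality step. One must ensure that each individual \(\Theta_{i,\gamma}\), and not only the combination \(G_{f,\gamma}\), lies in a single space \(M_{3/2}(N_{3/2},\chi)\), so that Sturm's theorem applies to differences. I would argue this from the Sirolli--Tornar\'ia construction: each summand is the theta series of a lattice with a Schwartz weight that is invariant under the same local groups, giving the same level and character. Where that is unclear, I would fall back on keeping separate representatives rather than merging them. The computational load, meaning coefficients up to Sturm bounds in the hundreds for levels like \(4\operatorname{lcm}(N,25)\), is large but routine.
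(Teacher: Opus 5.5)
Your proposal is correct and takes essentially the paper's route: start from \eqref{eq:global-theta-shape}, identify ideal-class summands related by unimodular substitutions of both forms and weights, merge proportional contributions by coefficient comparison (constant term included) through the Sturm bound, read level and character off Theorem~\ref{thm:st-rational-specialization}, and confirm nonvanishing from the computed coefficients. The point you flag as the main obstacle is left implicit in the paper, which simply compares the summands ``in their common modular-form space''; that point is that each individual \(\Theta_{i,\gamma}\), and not only \(G_{f,\gamma}\), lies in one space of weight \(3/2\) with a common level and character.
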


\begin{proof}
Starting from \eqref{eq:global-theta-shape}, we apply the same
unimodular substitutions to each quadratic form and its weights.
We then combine proportional contributions by exact coefficient
comparison, including the constant term, through the Sturm bound in
their common modular-form space.
These operations preserve the series and its normalization.
Theorem~\ref{thm:st-rational-specialization} gives the level and
character.  Nonzero coefficients in Appendix~\ref{app:fourier-data}
also verify nonvanishing in every case.
\end{proof}

Appendix~\ref{app:fourier-data} records initial Fourier expansions of
the resulting forms.
The computation was implemented in SageMath~10.6 \cite{Sage}, using
PARI/GP~2.17.1 \cite{PARI} to enumerate lattice vectors. For each base
twist, the program computes the Brandt eigenvector, ternary lattices,
auxiliary discriminants, and local weights. It then evaluates the
weighted sums through their
Sturm bounds. The recorded running times, including the
coefficient comparisons of Section~\ref{sec:prime-noncongruence},
are given at the end of that section.

\section{Tunnell-type criteria}
\label{sec:tunnell-criteria}

Let \(\cos\theta=\sigma j/5\), with \(j\in\{3,4\}\) and
\(\sigma\in\{1,-1\}\), and write a positive square-free integer as
\(n=2^e m\), where \(e\in\{0,1\}\) and \(m\) is odd.
Put \(\eta_m=(-1)^{(m-1)/2}\), \(D_0=\eta_m m\),
\(\tau=\sigma2^e\), and \(\mu=\tau\eta_m\).
By Lemma~\ref{lem:four-base-twists-cover}, \(D_0\) is an odd
fundamental discriminant and
\begin{equation}
  \sigma n=\tau m=\mu D_0,\qquad
  L(E_{5,\sigma j}^{(n)},1)
  =L(f_j^{(\mu)}\otimes\chi_{D_0},1).
  \label{eq:twist-selection}
\end{equation}
The required coefficient is indexed by \(m\), even when \(n=2m\).

To select a row, use Table~\ref{tab:metadata-three} for \(j=3\) and
Table~\ref{tab:metadata-four} for \(j=4\).
In the block headed \(d=\tau m\), look for the superscript
\((\mu)\) and an admissible residue class containing \(m\).
There is exactly one such row when the root number is \(+1\), and
none when it is \(-1\).
The form label then identifies its entire summand block in
Appendix~\ref{app:weighted-formulas}.

Equivalently, determine the type from \eqref{eq:type-definition}:
put \(\gamma(\infty)=\operatorname{sgn}(D_0)\), and, for
\(p\mid N_j^{(\mu)}\), put \(\gamma(p)=\left(\frac{D_0}{p}\right)\)
when \(p\nmid D_0\).
At an odd bad prime dividing \(D_0\), put
\(\gamma(p)=\epsilon_{f_j^{(\mu)}}(p)\).
Thus a zero Kronecker symbol selects the Atkin--Lehner sign; it is
not itself a sign in a form label.
Equation~\eqref{eq:type-root-number} gives the root number.
For example, \(\cos\theta=3/5\) and \(n=22\) give
\(m=11\), \(D_0=-11\), \(\tau=2\), and \(\mu=-2\).
The appropriate row is \(G_{3;-,+}^{(-2)}\) in the \(d=2m\) block,
and its coefficient at \(q^{11}\) determines
\(L(E_{5,3}^{(22)},1)\).

\begin{theorem}[Explicit central \(L\)-value formulas]
\label{thm:explicit-central-values}
With the notation above, if the type of \(D_0\) with respect to
\(f_j^{(\mu)}\) has root number \(-1\), then
\(L(E_{5,\sigma j}^{(n)},1)=0\).
Otherwise, let \(r\) be the selected row, and write
\(G_r(q)=\sum_{a\geq1}a_r(a)q^a\).
There is an explicit constant \(\kappa_r>0\), depending only on the
row and its normalization, such that
\begin{equation}
  L(E_{5,\sigma j}^{(n)},1)
  =2^{\omega(D_0,N_j^{(\mu)})}\kappa_r
    \frac{|a_r(m)|^2}{\sqrt m}.
  \label{eq:explicit-row-formula}
\end{equation}
In particular, for root number \(+1\), the central \(L\)-value vanishes
if and only if \(a_r(m)=0\).
\end{theorem}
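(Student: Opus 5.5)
The argument is a specialization of Theorem~\ref{thm:st-rational-specialization}, combined with the reduction of \eqref{eq:twist-selection} and the identification of the selected row in Proposition~\ref{prop:computed-series}.

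\emph{Reduction to a fixed base twist.} Given \(n=2^em\), Lemma~\ref{lem:four-base-twists-cover} applies with \(d=\sigma n\). It shows that \(D_0=\eta_m m\) is an odd fundamental discriminant and that \(L(E_{5,\sigma j}^{(n)},1)=L(f_j^{(\mu)}\otimes\chi_{D_0},1)\). Here we use modularity of \(E_{5,\sigma j}^{(n)}\), so its \(L\)-function is that of the twisted newform. We also use \(E_{5,-j}=E_{5,j}^{(-1)}\), which is what lets the negative-cosine cases be written with \(\mu\) absorbing \(\sigma\). The same lemma gives a type \(\gamma\) of \(D_0\) with respect to \(f=f_j^{(\mu)}\); this is the type described explicitly before the theorem, using Kronecker symbols away from \(D_0\) and Atkin--Lehner signs at odd bad primes dividing \(D_0\). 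The root number of \(f\otimes\chi_{D_0}\) is \(\epsilon_{f,\gamma}\) by \eqref{eq:type-root-number}. If \(\epsilon_{f,\gamma}=-1\), the functional equation forces the central value to vanish, which proves the first assertion.

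\emph{Root number \(+1\).} Here I check that the row selection in Tables~\ref{tab:metadata-three} and~\ref{tab:metadata-four} picks out exactly the pair \((f_j^{(\mu)},\gamma)\). The admissible residue classes of \(m\) in a block encode the local conditions \eqref{eq:type-definition} at \(2\) and at the odd primes of \(N_j^{(\mu)}\), and \(\gamma(\infty)=\operatorname{sgn}D_0=\eta_m\) is determined by \(m\bmod 4\). Because the finite signs together with the root-number constraint determine \(\gamma(\infty)\), each root-number-\(+1\) type occurs once. So uniqueness and existence of the row amount to a finite check that the tabulated classes partition the residues of \(m\) with \(\epsilon_{f,\gamma}=1\). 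This bookkeeping is where I expect the main obstacle. The subtle cases are primes \(p\mid D_0\cap N\), where a zero symbol must be matched with the Atkin--Lehner sign rather than with either sign label. There is also the prime \(2\): its local symbol for odd \(D_0\) depends on \(D_0\bmod 8\), and this must be translated into a condition on \(m\) via \(\eta_m\). I would verify this case by case from the Atkin--Lehner signs of the eight newforms.

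\emph{The formula.} By Proposition~\ref{prop:computed-series}, the series \(G_r\) equals \(G_{f,\gamma}\), with \(a_r=a_\gamma\). Theorem~\ref{thm:st-rational-specialization} applied with \(D=D_0\) and \(|D_0|=m\) then gives \eqref{eq:explicit-row-formula} with \(\kappa_r=\kappa(f,\gamma,l)\) from \eqref{eq:st-final-formula-const}. This constant is positive because \(c_{0,N}>0\), the Petersson and Brandt norms are positive (\(\varphi_f\neq0\)), and \(L(f\otimes\chi_l,1)\) is nonzero by the choice \eqref{eq:auxiliary-local-conditions}. It is nonnegative by Waldspurger/Kohnen--Zagier, or alternatively by \eqref{eq:st-product-formula} itself, whose right side is \(\ge0\) for \(D=l\)-type twists. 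Any rescaling introduced in the simplification of Subsection~\ref{subsec:theta_series_reduction} changes \(a_r\) by a fixed nonzero scalar, and that scalar is absorbed into \(\kappa_r\); this is the ``normalization'' dependence in the statement. The final equivalence, vanishing exactly when \(a_r(m)=0\), follows from \(\kappa_r>0\).
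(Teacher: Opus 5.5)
Your proposal is correct and follows the paper's own proof: \eqref{eq:twist-selection} plus the functional equation when the root number is $-1$, and Theorem~\ref{thm:st-rational-specialization} applied to the selected row with $|D_0|=m$ when it is $+1$. Your discussion of row selection and of $\kappa_r>0$ only spells out what the paper leaves to the surrounding text. One caveat: your ``alternative'' argument for $L(f\otimes\chi_l,1)\ge0$ via \eqref{eq:st-product-formula} does not work on its own, because that formula only shows a product of two central values is nonnegative. Rely on the Waldspurger/Kohnen--Zagier nonnegativity instead.
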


\begin{proof}
Use \eqref{eq:twist-selection} and the functional equation for root
number \(-1\).  For root number \(+1\), apply
Theorem~\ref{thm:st-rational-specialization} to the selected form and
use \(|D_0|=m\).
\end{proof}

\begin{rmk}
The constant \(\kappa_r\) is given by
\eqref{eq:st-final-formula-const}, with any overall rescaling of
\(G_r\) taken into account.  Its numerical value is unnecessary for
the vanishing criterion.
\end{rmk}

\begin{theorem}[Tunnell-type criterion for the four angles]
\label{thm:tunnell-four-angles}
Let \(n=2^e m\) be positive and square-free, and let
\(\cos\theta=\sigma j/5\), with \(j\in\{3,4\}\) and
\(\sigma\in\{1,-1\}\).
Determine the root number and, when it is \(+1\), the coefficient
\(a_r(m)\) by the rule above.
If the root number is \(+1\) and \(a_r(m)\neq0\), then \(n\) is not
\(\theta\)-congruent.
Assuming the rank part of the Birch--Swinnerton-Dyer conjecture for
\(E_{5,\sigma j}^{(n)}\), the integer \(n\) is \(\theta\)-congruent
if and only if either the root number is \(-1\), or it is \(+1\)
and \(a_r(m)=0\).
\end{theorem}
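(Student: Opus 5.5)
The plan is to combine three earlier results: Theorem~\ref{thm:explicit-central-values}, which expresses the central value through $a_r(m)$; Proposition~\ref{prop:rank-criterion-four-cases}, which turns $\theta$-congruence into positive rank; and the Coates--Wiles/Gross--Zagier--Kolyvagin theorem, which gives the unconditional direction. I would first fix $n=2^em$ and $\cos\theta=\sigma j/5$, with $\tau,\mu,D_0$ as in \eqref{eq:twist-selection}. That relation identifies $L(E_{5,\sigma j}^{(n)},1)$ with $L(f_j^{(\mu)}\otimes\chi_{D_0},1)$. By Lemma~\ref{lem:four-base-twists-cover}, $D_0$ has a well-defined type $\gamma$ with respect to $f_j^{(\mu)}$. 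The root number of $E_{5,\sigma j}^{(n)}$ is then $\epsilon_{f,\gamma}$, computed from \eqref{eq:type-root-number}.

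For the unconditional statement, suppose the root number is $+1$ and $a_r(m)\neq0$. Formula \eqref{eq:explicit-row-formula}, together with $\kappa_r>0$, gives $L(E_{5,\sigma j}^{(n)},1)\neq0$. By modularity and Kolyvagin's theorem (combined with Gross--Zagier and the nonvanishing of a suitable auxiliary twist, or directly by Kato's theorem), analytic rank $0$ implies $\operatorname{rank}E_{5,\sigma j}^{(n)}(\mathbb Q)=0$. Proposition~\ref{prop:rank-criterion-four-cases} then shows that $n$ is not $\theta$-congruent. Lemma~\ref{lem:torsion-5-3-5-4} is needed here: without it, non-$\theta$-congruence would not follow from rank $0$, since a torsion point outside the $2$-torsion could occur.

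For the conditional equivalence, I would assume the rank part of BSD, so that $\operatorname{rank}E_{5,\sigma j}^{(n)}(\mathbb Q)=\operatorname{ord}_{s=1}L(E_{5,\sigma j}^{(n)},s)$. If the root number is $-1$, the functional equation forces odd order of vanishing, hence positive rank, so $n$ is $\theta$-congruent. If the root number is $+1$, the order of vanishing is even. Theorem~\ref{thm:explicit-central-values} says it is zero exactly when $a_r(m)\neq0$, and positive (at least $2$) exactly when $a_r(m)=0$. Combined with Proposition~\ref{prop:rank-criterion-four-cases}, these cases give the stated criterion.

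The main point to check is the well-definedness of the row selection. For root number $+1$ there must be exactly one row $r$ with the correct superscript $\mu$ and residue class, and it must equal the form $G_{f_j^{(\mu)},\gamma}$ of Proposition~\ref{prop:computed-series} for the type of $D_0$. This amounts to verifying that the tables list every root-number-$+1$ type $\gamma$ and that the residue classes of $m$ encode \eqref{eq:type-definition}, including the convention at odd bad primes dividing $D_0$. I would argue this case by case from Table~\ref{tab:base-twists} and the Atkin--Lehner signs. The other steps only assemble earlier results.
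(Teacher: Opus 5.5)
Your proposal is correct and follows the paper's own route. Nonvanishing of $a_r(m)$ gives $L(E_{5,\sigma j}^{(n)},1)\neq0$ via \eqref{eq:explicit-row-formula}, Kolyvagin's theorem gives rank zero, and Proposition~\ref{prop:rank-criterion-four-cases} excludes $\theta$-congruence; under the rank part of BSD, the functional equation and the explicit formula decide vanishing in the two root-number cases. Your additional check that the row selection is well defined is simply asserted in the selection rule preceding the theorem, so it is a harmless extra step rather than a needed one.
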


\begin{proof}
A nonzero \(a_r(m)\) gives \(L(E_{5,\sigma j}^{(n)},1)\neq0\).
Kolyvagin's theorem \cite{Ko} then gives Mordell--Weil rank zero,
and Proposition~\ref{prop:rank-criterion-four-cases} excludes
\(\theta\)-congruence.
Under BSD, vanishing of the central \(L\)-value is equivalent to positive
Mordell--Weil rank.  The functional equation and
\eqref{eq:explicit-row-formula} decide vanishing in the two root-number
cases, and the same proposition completes the proof.
\end{proof}

\section{Non-\texorpdfstring{\(\theta\)}{theta}-Congruent Primes}
\label{sec:prime-noncongruence}

We prove nonvanishing of the coefficients selected in
Section~\ref{sec:tunnell-criteria} for primes in certain residue classes.
Put \(\lambda=1+i\), so that
\(\mathbb Z[i]/(\lambda)\simeq\mathbb F_2\).
Let \(r_3(n)\) and \(r_{135}(n)\) denote the numbers of integral
representations of \(n\) by \(x^2+y^2+z^2\) and
\(x^2+3y^2+5z^2\), respectively, and write
\(\theta_Q(q)=\sum_{n\geq0}r_Q(n)q^n\).

For \(F(q)=\sum_{n\geq0}c_F(n)q^n\), put
\(\operatorname{pr}_{C,M}F
=\sum_{n\bmod M\in C}c_F(n)q^n\).
For the sets \(C\) occurring below, the indicator of \(C\) is a linear
combination of quadratic Dirichlet characters modulo \(M\), including
principal characters that vanish on nonunits. Thus the projection is
the corresponding linear combination of the coefficientwise twists
\(F\otimes\xi=\sum_{n\geq0}\xi(n)c_F(n)q^n\), which are modular
by the half-integral-weight twisting construction \cite{Sh}.

\subsection{Coefficient congruences}

For a row \(r\) of Table~\ref{tab:sturm-comparison-three} or
Table~\ref{tab:sturm-comparison-four}, write
\(G_r(q)=\sum_{n\geq1}a_r(n)q^n\), and let
\(C_r,e_r,Q_r,d_r\) be the data in that row.
Set \(M_r=40\) in the first table and \(M_r=120\) in the second.

The translation argument for the twisting construction \cite{Sh}
gives a sufficient common level \(L\) whenever
\(N\mid L\), \(4M^2\mid L\), and
\(M\operatorname{cond}(\chi)\mid L\) for every form being projected;
here \(N\) is its level and \(\chi\) its character. These conditions
hold with \(L_{40}=76800\) and \(L_{120}=460800\) for the forms in
the two tables and the comparison theta series, whose levels are
\(4\) and \(60\).
Put \(B_M=\frac18[\operatorname{SL}_2(\mathbb Z):\Gamma_0(L_M)]\).
The index formula \([\operatorname{SL}_2(\mathbb Z):\Gamma_0(L)]
=L\prod_{p\mid L}(1+1/p)\) gives
\(B_{40}=23040\) and \(B_{120}=138240\).

\begin{proposition}[Coefficient congruences]
\label{prop:sturm-certified-prime-congruences}
For every row \(r\) and every positive integer \(n\) with
\(n\bmod M_r\in C_r\),
\[
 \frac{a_r(n)}{2^{e_r}}\in\mathbb Z[i],\qquad
 \frac{r_{Q_r}(n)}{2^{d_r}}\in\mathbb Z,\qquad
 \frac{a_r(n)}{2^{e_r}}
 \equiv \frac{r_{Q_r}(n)}{2^{d_r}}\pmod\lambda .
\]
\end{proposition}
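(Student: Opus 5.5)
The plan is to turn the three assertions into one identity between $q$-series with coefficients in $\mathbb Z[i]$ and then certify that identity with Sturm's theorem in its mod-$\lambda$ form.

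Fix a row $r$ and consider the two projected series
\[
 A_r=\operatorname{pr}_{C_r,M_r}G_r,\qquad T_r=\operatorname{pr}_{C_r,M_r}\theta_{Q_r}.
\]
Here $\theta_{Q_r}$ is $\theta_{x^2+y^2+z^2}$ (level $4$) or $\theta_{x^2+3y^2+5z^2}$ (level $60$).

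As explained before the statement, the indicator of $C_r$ is a $\mathbb Q$-linear combination of quadratic characters modulo $M_r$. Hence both projections are finite combinations of coefficientwise twists. By the translation argument for twisting \cite{Sh}, they are holomorphic modular forms of weight $3/2$ on $\Gamma_0(L_{M_r})$, with $L_{40}=76800$ and $L_{120}=460800$.

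The denominators of this linear combination are powers of $2$ at worst. They only introduce controlled powers of $2$, so integrality is cleanest to check directly on coefficients rather than through the character expansion.

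\textbf{Integrality.} For $\theta_{Q_r}$, the coefficients are nonnegative integers, so $r_{Q_r}(n)/2^{d_r}$ is an integer once we know $2^{d_r}\mid r_{Q_r}(n)$ on $C_r$. This divisibility comes from sign changes $(x,y,z)\mapsto(\pm x,\pm y,\pm z)$ and, for $x^2+y^2+z^2$, coordinate permutations. These group vectors into orbits of size divisible by $2^{d_r}$, since for $n$ in $C_r$ (squarefree-type residues) no coordinate vanishes, or only few do.

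For $a_r(n)$ I would argue the same way from the summand formulas of Tables~\ref{tab:three-fifths-summands}--\ref{tab:four-fifths-summands}. The weights take values in $\{0,\pm1,\pm i\}$, the summands come with coefficients $c$, and $v\mapsto -v$ changes the first-kind and second-kind factors by computable signs. Pairing $v$ with $-v$, and with any further automorphisms of $Q$ preserving the weights, yields the factor $2^{e_r}$ in $\mathbb Z[i]$.

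If this orbit argument is awkward for some row, the fallback is to verify $a_r(n)/2^{e_r}\in\mathbb Z[i]$ by a Sturm-type argument applied to $A_r/2^{e_r}$. This requires knowing a priori that the coefficients lie in $2^{-k}\mathbb Z[i]$ for some $k$, which is clear from the construction.

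\textbf{The congruence.} Set
\[
 H_r=2^{-e_r}A_r-2^{-d_r}T_r.
\]
This lies in $M_{3/2}(\Gamma_0(L_{M_r}),\chi)\otimes\mathbb Z[i]$ after we check that the characters of the twisted pieces agree. If they do not, I multiply by a suitable power of $\theta$ to pass to integral weight, or compare in the common space containing all twists.

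Then I invoke Sturm's theorem modulo the prime $\lambda$ of $\mathbb Z[i]$ (Sturm's theorem holds for congruences modulo primes of the coefficient ring, and extends to half-integral weight by squaring or by multiplying by $\theta$). It states that if the coefficients of $H_r$ lie in $\lambda\mathbb Z[i]$ for all $n\le B_{M_r}$, then they all do. So the proof reduces to a finite computation: enumerate $a_r(n)$ and $r_{Q_r}(n)$ for $n\le B_{40}=23040$, respectively $n\le B_{120}=138240$, with $n\bmod M_r\in C_r$, and check the congruence.

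\textbf{Main obstacle.} I expect the hardest part to be making the Sturm step rigorous rather than the computation itself, and I would settle it first. Three points need care:
\begin{itemize}
\item The half-integral weight and the mixed Nebentypus must be handled so that a mod-$\lambda$ Sturm bound genuinely applies with index $B_M$.
\item The common level $L_M$ must contain all twists.
\item The divisions by $2^{e_r}$ and $2^{d_r}$ must keep the coefficients $\lambda$-integral, since $\lambda\mid 2$ and division by $2$ is exactly where the congruence could break.
\end{itemize}
For the half-integral-weight issue, the route I would try first is to multiply $H_r$ by $\theta(q)=\sum q^{n^2}$, which has integer coefficients and is $\equiv1$ modulo nothing problematic. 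This lands in integral weight $2$, where Sturm's theorem modulo $\lambda$ is standard. Since $\theta$ has leading coefficient $1$, $\lambda$-divisibility of the product's coefficients up to the bound is equivalent to that of $H_r$.
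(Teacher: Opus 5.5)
\textbf{There is a genuine gap at the central Sturm step.} The overall structure (project, establish $\lambda$-integrality, certify by a mod-$\lambda$ Sturm check through $B_M$) matches the paper, but your mechanism for that check does not work. Projecting by quadratic characters does not change the Nebentypus. So $A_r$ keeps the character of $G_r$, and $T_r$ keeps that of $\theta_{Q_r}$, and these genuinely differ in several rows. For example, $G_{3;-,-}^{(1)}$ has character $\psi_2\psi_5$ of order $4$, while $\theta_{x^2+y^2+z^2}$ has at most a quadratic character. Hence $H_r=2^{-e_r}A_r-2^{-d_r}T_r$ lies in no single space $M_{3/2}(\Gamma_0(L_M),\chi)$.

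Neither of your remedies repairs this. Multiplying by a power of $\theta$ multiplies both pieces by the same automorphy factor, so their characters stay distinct. Passing to $\Gamma_1(L_M)$, or to any smaller group on which $H_r$ has a single character, enlarges the Sturm bound, so a check through $B_M$ no longer suffices. Even where the characters agree, the weight-$2$ route via $\theta$ needs $\tfrac{2}{12}[\operatorname{SL}_2(\mathbb Z):\Gamma_0(L_M)]=\tfrac43 B_M$ coefficients, because multiplying by $\theta$ does not raise the order of vanishing.

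The missing idea is the paper's characteristic-$2$ trick. All characters involved have order dividing $4$, so $F^4$ and $H^4$ lie in $M_6(\Gamma_0(L_M))$ with trivial character. Since $\lambda\mid 2$, we have $F^4-H^4\equiv(F-H)^4\equiv\sum_n(\bar a_n-\bar b_n)q^{4n}\pmod\lambda$. Agreement of $F$ and $H$ through $B_M$ therefore gives agreement through $4B_M$, which is exactly the weight-$6$ Sturm bound. Injectivity of Frobenius on $\mathbb F_2[[q]]$ then returns $F\equiv H\pmod\lambda$.

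\textbf{Integrality of $a_r(n)/2^{e_r}$ also needs the Sturm route, not orbits.} For $\theta_{Q_r}$ your orbit count is fine and is a legitimate alternative to the paper's verification. On the classes $C_r$ one has $n\equiv 3,5\pmod 8$ (respectively $n\equiv 7\pmod 8$), which forces enough nonzero coordinates. For $a_r(n)$, however, the involution $v\mapsto -v$ together with the integers $c$ only yields $2^{1+\min v_2(c)}$. For instance, $G_{4;-,-,+}^{(1)}$ has $c\in\{2,\pm4,-2\}$, giving only $2^2$, but $e_r=5$. You exhibit no further weight-preserving isometries of forms such as $60x^2+111y^2+175z^2+60xy+60xz+30yz$.

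So integrality must come from Sturm, organized as in the paper. $A_r$ already has coefficients in $\mathbb Z[i]$, so apply the mod-$\lambda$ test with $H=0$ to $A_r$ itself. Then divide by $\lambda=1+i$; the quotient stays in the same space. Repeat this $2e_r$ times, using $2=-i\lambda^2$. No a priori denominator bound enters.
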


\begin{proof}
Fix \(r\), put \(M=M_r\), and set
\(F_r=\operatorname{pr}_{C_r,M}G_r\) and
\(H_r=\operatorname{pr}_{C_r,M}\theta_{Q_r}\).
These are forms of weight \(3/2\) at the common level \(L_M\), with
characters of order dividing \(4\).

We first record the finite congruence test used below.
If two such forms \(F,H\), with coefficients in \(\mathbb Z[i]\),
agree modulo \(\lambda\) for \(0\leq n\leq B_M\),
then \(F^4\) and \(H^4\) are forms of weight \(6\) and level \(L_M\)
with trivial character.  Since the residue field has characteristic
\(2\), agreement of \(F\) and \(H\) through \(B_M\) gives agreement
of their fourth powers through \(4B_M\), which is the weight \(6\)
Sturm bound.  Sturm's theorem \cite{Sturm} therefore gives
\(F^4\equiv H^4\pmod\lambda\), and hence
\(F\equiv H\pmod\lambda\), since fourth powers are injective in
\(\mathbb F_2[[q]]\).

Exact computation through \(B_M\) verifies the two divisibility
statements and the normalized congruence. Applying the test with
\(H=0\) first proves divisibility of every coefficient by \(\lambda\).
After division by \(\lambda\), the quotient still has integral
coefficients and lies in the same modular form space, so the test
applies again. Since \(2=-i\lambda^2\), a total of \(2e_r\) and
\(2d_r\) repetitions proves
\(F_r/2^{e_r}\in\mathbb Z[i][[q]]\) and
\(H_r/2^{d_r}\in\mathbb Z[[q]]\); the latter coefficients are integers
because they are also rational. Applying the test to these normalized
forms proves the congruence for all coefficients.
\end{proof}

\subsection{Parity of the comparison coefficients}

For a negative discriminant \(D\), let \(\operatorname{Cl}(D)\) be the
class group of primitive positive-definite binary quadratic forms of
discriminant \(D\), up to \(\operatorname{SL}_2(\mathbb Z)\)-equivalence.
Write \([A]\) for the class of \(A\), \(h(D)=\#\operatorname{Cl}(D)\),
and \(\operatorname{Cl}(D)^2\) for the subgroup of squares.

\begin{lemma}
\label{lem:three-square-prime-parity}
If \(p\equiv3\) or \(5\pmod8\) is prime, then \(r_3(p)/8\) is odd.
\end{lemma}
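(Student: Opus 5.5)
The plan is to combine Gauss's three-square formula with genus theory for imaginary quadratic class groups. This reduces the parity of \(r_3(p)/8\) to the parity of \(h(-p)\) or of \(h(-4p)/2\).

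First I treat the small case and recall the formula. For \(p=3\), the representations of \(3\) as a sum of three squares are \((\pm1,\pm1,\pm1)\). Hence \(r_3(3)=8\) and \(r_3(3)/8=1\) is odd. For primes \(p>3\), Gauss's theorem gives
\[
 r_3(p)=\begin{cases}24\,h(-p),&p\equiv3\pmod8,\\ 12\,h(-4p),&p\equiv1,2\pmod4,\end{cases}
\]
where \(h(D)\) is the class number of primitive positive-definite forms of discriminant \(D\). The unit-weight corrections only affect discriminants \(-3\) and \(-4\), so they do not arise here. It follows that \(r_3(p)/8=3h(-p)\) when \(p\equiv3\pmod8\), and \(r_3(p)/8=\tfrac32h(-4p)\) when \(p\equiv5\pmod8\). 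So it suffices to show two things: \(h(-p)\) is odd when \(p\equiv3\pmod8\), and \(h(-4p)\equiv2\pmod4\) when \(p\equiv5\pmod8\).

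Case \(p\equiv3\pmod8\). The discriminant \(-p\) is a prime discriminant. Genus theory says the \(2\)-rank of \(\operatorname{Cl}(-p)\) is (number of prime discriminant factors)\(-1=0\). Hence \(h(-p)\) is odd.

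Case \(p\equiv5\pmod8\). We have \(-4p=(-4)\cdot p\), which has two prime discriminant factors. So \(\operatorname{Cl}(-4p)[2]\) has order \(2\), and its nontrivial element is the ambiguous class \(A=[2,2,(p+1)/2]\); this form is primitive because \((p+1)/2\) is odd. To get \(h(-4p)\equiv2\pmod4\), I must show that the \(4\)-rank vanishes, i.e. \(\operatorname{Cl}(-4p)[2]\cap\operatorname{Cl}(-4p)^2\) is trivial. For this I use the principal genus theorem: \(\operatorname{Cl}(D)^2\) equals the principal genus. So it suffices to show that \(A\) lies outside the principal genus. The form \(A\) represents \(2\), and the genus character attached to \(p\) takes the value \(\left(\frac{2}{p}\right)=-1\) on it, since \(p\equiv5\pmod8\). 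Meanwhile the principal form \(x^2+py^2\) represents \(1\), on which the character is \(+1\). So \(A\notin\operatorname{Cl}(-4p)^2\), giving \(4\nmid h(-4p)\) and hence \(h(-4p)/2\) odd.

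The main point needing care is invoking the correct normalization of Gauss's formula together with the principal genus theorem. Both are classical, and I would cite them rather than reprove them. The only real computation is the genus-character evaluation on the ambiguous form, which I would check directly.
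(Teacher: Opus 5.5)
Your proof is correct and follows essentially the same route as the paper. Both arguments use Gauss's three-square formula and genus theory for \(p\equiv3\pmod 8\). For \(p\equiv5\pmod 8\), both use the ambiguous form \(2x^2+2xy+\frac{p+1}{2}y^2\), whose \(p\)-genus character is \(\left(\frac{2}{p}\right)=-1\); it is therefore a nonsquare element of order \(2\), which forces the Sylow \(2\)-subgroup of \(\operatorname{Cl}(-4p)\) to have order \(2\). The only cosmetic difference is the value at which the character is evaluated. The paper uses \(A(1,1)=(p+9)/2\), which is prime to \(4p\), while you use \(A(1,0)=2\); this is legitimate because the \(p\)-character only requires the represented value to be prime to \(p\).
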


\begin{proof}
For square-free \(p>3\), Gauss's three-square formula gives
\(r_3(p)=24h(-p)\) when \(p\equiv3\pmod8\), and
\(r_3(p)=12h(-4p)\) when \(p\equiv5\pmod8\);
see \cite[\S1, (1)]{OnoSkinner}.
If \(p\equiv3\pmod8\), genus theory shows that \(h(-p)\) is odd, so
\(r_3(p)/8=3h(-p)\) is odd.  The case \(p=3\) is immediate from
\(r_3(3)=8\).

Now suppose \(p\equiv5\pmod8\), and let
\(P\) be the Sylow \(2\)-subgroup of \(\operatorname{Cl}(-4p)\).
Genus theory gives
\(\#(\operatorname{Cl}(-4p)/\operatorname{Cl}(-4p)^2)=2\);
hence \(P\) is nontrivial and cyclic.
Consider
\(A_p(x,y)=2x^2+2xy+(p+1)y^2/2\), of discriminant \(-4p\).
The inverse of the class of \([a,b,c]\) is represented by
\([a,-b,c]\), while the determinant \(1\) substitution
\((x,y)\mapsto(x-y,y)\) sends \(A_p\) to
\(2x^2-2xy+(p+1)y^2/2\).
Thus \([A_p]=[A_p]^{-1}\).

Let \(\xi_p\) be the genus character
\(\xi_p([A])=(\frac pt)\), where \(t\) is represented by \(A\) and
\((t,4p)=1\).  Since
\(A_p(1,1)=(p+9)/2\), quadratic reciprocity gives
\(\xi_p([A_p])=(\frac2p)=-1\).
Thus \([A_p]\) is a nonsquare element of order \(2\).
In a cyclic \(2\)-group of order at least \(4\), its unique element
of order \(2\) is a square; hence \(\#P=2\).
Therefore \(h(-4p)/2\) is odd, and so
\(r_3(p)/8=3h(-4p)/2\) is odd.
See \cite[\S3]{Cox} for the class group and genus theoretic facts.
\end{proof}

\begin{lemma}
\label{lem:135-prime-parity}
Let \(p\nmid30\) be prime with \(p\equiv7\pmod8\).
Then \(r_{135}(p)/4\) is an integer, odd precisely for
\(p\equiv7,23,31,47,79,103\pmod{120}\), and is even for
\(p\equiv71,119\pmod{120}\).
\end{lemma}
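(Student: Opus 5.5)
The plan is a parity count via the sign action. The group \(\{\pm1\}^3\) acts on the solution set
\[
\{(x,y,z)\in\mathbb Z^3 : x^2+3y^2+5z^2=p\}
\]
by changing signs of coordinates. An orbit has size \(8\) if all coordinates are nonzero, size \(4\) if exactly one coordinate vanishes, and size \(2\) if two vanish. Two vanishing coordinates would force \(p\in\{x^2,3y^2,5z^2\}\), which is impossible for a prime \(p\nmid 30\). Hence \(4\mid r_{135}(p)\), and
\[
\frac{r_{135}(p)}{4}\equiv \#\{\text{solutions with exactly one zero coordinate and the other two positive}\}\pmod 2 .
\]
The proof therefore reduces to counting positive representations of \(p\) by the three binary forms \(x^2+3y^2\), \(x^2+5z^2\) and \(3y^2+5z^2\).

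First I would discard \(x^2+5z^2\) by a congruence modulo \(8\). Checking parities, \(x^2+5z^2\bmod 8\) lies in \(\{0,1,4,5,6\}\) and never equals \(7\). By contrast, \(x^2+3y^2\equiv7\) (take \(x\equiv2\pmod4\), \(y\) odd) and \(3y^2+5z^2\equiv7\) (take \(y\) odd, \(z\equiv2\pmod4\)) are both possible, so these two forms remain.

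Next I would count the remaining two binary forms exactly using classical theory, as in the proof of Lemma~\ref{lem:three-square-prime-parity}.
\begin{enumerate}[\normalfont (i)]
\item For \(x^2+3y^2\), of discriminant \(-12\): there is a single reduced form. So \(p\) is represented iff \(p\equiv1\pmod3\), and then the representation is unique up to signs, giving exactly one solution with \(x,y>0\).
\item For \(3y^2+5z^2\), of discriminant \(-60\): \(\operatorname{Cl}(-60)\simeq(\mathbb Z/2\mathbb Z)^2\), with reduced forms \(x^2+15y^2\), \(3x^2+5y^2\), \(2x^2+2xy+8y^2\) and \(4x^2+2xy+4y^2\). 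Since there are four genera, each genus contains a single class. Hence \(p\) is represented by \(3y^2+5z^2\) iff its genus characters match, namely \(p\equiv2\pmod3\) and \(p\equiv2,3\pmod5\). In that case the representation is unique up to signs, because \(3y^2+5z^2\) is ambiguous with automorphism group \(\{\pm1\}\), so again there is exactly one positive solution.
\end{enumerate}

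Putting these together, \(r_{135}(p)/4\) is odd iff either \(p\equiv1\pmod3\), or \(p\equiv2\pmod3\) and \(p\equiv2,3\pmod5\); the two conditions are mutually exclusive. I would finish by checking the eight classes \(p\equiv7\pmod8\) modulo \(120\) that are prime to \(30\):
\begin{enumerate}[\normalfont (i)]
\item \(7,31,79,103\equiv1\pmod3\), so these are odd;
\item \(23\) and \(47\) are \(\equiv2\pmod3\) and \(\equiv3,2\pmod5\) respectively, so they are odd;
\item \(71\) and \(119\) are \(\equiv2\pmod3\) but \(\equiv1,4\pmod5\), so they are even.
\end{enumerate}
This matches the statement.

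The main obstacle I expect is the rigorous genus-theory step for discriminant \(-60\). It requires justifying that one class per genus gives uniqueness of the representation up to sign, including the count of proper versus improper representations, and pinning down the genus characters of \(3x^2+5y^2\) exactly. I would cite \cite[\S3]{Cox} for the table of reduced forms of discriminant \(-60\) and for the theorem that a prime \(p\nmid D\) is represented by a form in exactly one genus, with \(r_A(p)=w\cdot\#\{\text{classes representing }p\}\) adjusted for ambiguous forms.
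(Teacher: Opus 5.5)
Your argument is essentially the paper's proof. Both reduce \(r_{135}(p)/4 \bmod 2\) by sign-change orbits to the positive solutions of \(x^2+3y^2=p\) and \(3y^2+5z^2=p\); the paper removes the \(y=0\) case by the same mod-\(8\) check, phrased as ``\(y\) is odd''. Both then use one class per genus in discriminants \(-12\) and \(-60\) to get exactly one positive solution when \(p\) is represented, and finish with the same CRT check.

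One statement needs correcting: \(\operatorname{Cl}(-60)\) is not \((\mathbb Z/2\mathbb Z)^2\). The forms \(2x^2+2xy+8y^2\) and \(4x^2+2xy+4y^2\) are imprimitive, being twice forms of discriminant \(-15\). In fact \(h(-60)=2\), and there are two genera, distinguished by \(\bigl(\left(\frac{\cdot}{3}\right),\left(\frac{\cdot}{5}\right)\bigr)=(+,+)\) or \((-,-)\). So ``one class per genus'' still holds. Your representation criterion, which implicitly uses \(\left(\frac{-15}{p}\right)=\left(\frac p3\right)\left(\frac p5\right)=1\) exactly as the paper does, and your final count are unaffected.
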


\begin{proof}
Reduction modulo \(8\) in
\(x^2+3y^2+5z^2=p\) forces \(y\) odd and \(x,z\) even.
Sign changes show that, modulo \(2\),
\(r_{135}(p)/4\) is the sum of the numbers \(A(p)\) and \(B(p)\) of
positive solutions of \(x^2+3y^2=p\) and \(3y^2+5z^2=p\),
respectively.

The only reduced primitive form of discriminant \(-12\) is
\(x^2+3y^2\).  By the representation criterion for reduced forms
\cite[Proposition~2.15]{Cox}, it represents \(p\) exactly when
\((\frac p3)=1\).  Since its genus contains one class,
\cite[Corollary~3.26]{Cox} shows that a represented prime has four
integral representations, hence one positive solution.  Thus
\(A(p)=(1+(\frac p3))/2\).

The primitive reduced forms of discriminant \(-60\) are
\(x^2+15y^2\) and \(3x^2+5y^2\).  Since both have zero middle
coefficient, \cite[Theorem~3.22]{Cox} shows that each genus of
discriminant \(-60\) consists of a single class.

If \(3y^2+5z^2=p\), reduction modulo \(3\) and \(5\) gives
\(\left(\frac p3\right)=\left(\frac p5\right)=-1\).
Conversely, suppose that both symbols are \(-1\).
Quadratic reciprocity gives
\(\left(\frac{-15}{p}\right)=\left(\frac p3\right)
\left(\frac p5\right)=1\).  Hence
\cite[Proposition~2.15]{Cox} shows that \(p\) is represented by one
of the two reduced forms of discriminant \(-60\).
The condition \(\left(\frac p3\right)=-1\) rules out
\(x^2+15y^2\), and therefore \(p=3y^2+5z^2\) for some integers
\(y,z\).

Since \(p\) is prime, \cite[Corollary~3.26]{Cox} gives exactly four
proper representations by the unique reduced form in this genus.
As \(p\nmid15\), neither coordinate can vanish, and these four
representations are precisely the sign changes of one positive
solution.  Thus
\(B(p)=(1-\left(\frac p3\right))(1-\left(\frac p5\right))/4\).
Thus \(A(p)+B(p)\) is odd exactly when \(\left(\frac p3\right)=1\),
or when both symbols are \(-1\). Together with \(p\equiv7\pmod8\),
the Chinese remainder theorem gives the stated residue classes.
\end{proof}

\subsection{The prime criterion}

\begin{theorem}[Non-\(\theta\)-congruent primes]
\label{thm:prime-non-theta-congruent}
A prime \(p\) is not \(\theta\)-congruent in each of the following cases:
\begin{center}
\begin{tabular}{ccl}
\toprule
\(\cos\theta\) & modulus & \(p\) \\
\midrule
\(3/5\)  & \(40\)  & \(11,19,21,29\pmod{40}\)\\
\(-3/5\) & \(40\)  & \(3,7,23,27\pmod{40}\)\\
\(4/5\)  & \(120\) & \(11,23,31,43,47,53,59,67,77,79\pmod{120}\)\\
\(-4/5\) & \(120\) & \(7,19,83,91,103,107\pmod{120}\)\\
\bottomrule
\end{tabular}
\end{center}
For \(3/5\), the first condition is equivalently
\((\frac p5)=1\) and \(p\equiv3,5\pmod8\);
for \(-3/5\), it is equivalently
\((\frac p5)=-1\) and \(p\equiv3,7\pmod8\).
\end{theorem}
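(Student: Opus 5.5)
The plan is to reduce each case to the nonvanishing of a single Fourier coefficient. This nonvanishing will then be certified through the congruences of Proposition~\ref{prop:sturm-certified-prime-congruences} and the parity lemmas.

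Fix one of the four angles, $\cos\theta=\sigma j/5$, and a prime $p$ in one of the listed classes; the prime $p=2$ does not occur in any class. Write $n=p$, so $e=0$ and $m=p$. By Section~\ref{sec:tunnell-criteria}, $D_0=\eta_p p$, $\tau=\sigma$, and $\mu=\sigma\eta_p$. Since $p\bmod 8$ and the relevant Legendre symbols at $3$ and $5$ are fixed by the residue class, the type $\gamma$ of $D_0$ with respect to $f_j^{(\mu)}$ is determined by the class. I will use the row-selection rule, and the root-number formula \eqref{eq:type-root-number}, to check that every listed class gives root number $+1$ and selects a definite row $r$. That row should be one of the rows of Table~\ref{tab:sturm-comparison-three} or Table~\ref{tab:sturm-comparison-four}, with $p\bmod M_r\in C_r$. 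The residue classes in the statement are grouped exactly so that $p$ lands in one of the sets $C_r$. This bookkeeping is a finite check over residues modulo $40$ or $120$.

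With the row fixed, Proposition~\ref{prop:sturm-certified-prime-congruences} gives
\[
  \frac{a_r(p)}{2^{e_r}}\equiv\frac{r_{Q_r}(p)}{2^{d_r}}\pmod\lambda .
\]
Here $Q_r$ is either $x^2+y^2+z^2$ or $x^2+3y^2+5z^2$. For the rows using $r_3$, I expect $d_r=3$ and $C_r$ to consist of classes with $p\equiv3,5\pmod8$. Then Lemma~\ref{lem:three-square-prime-parity} shows that $r_3(p)/8$ is odd, hence a unit modulo $\lambda$. This treats the $3/5$ rows, which carry the condition $p\equiv3,5\pmod 8$, and the $\pm4/5$ rows in the classes $p\equiv3,5\pmod8$.

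For the rows using $r_{135}$, the residue classes have $p\equiv7\pmod8$ and $p\nmid30$, with $d_r=2$. Lemma~\ref{lem:135-prime-parity} gives oddness of $r_{135}(p)/4$ exactly for $p\equiv7,23,31,47,79,103\pmod{120}$. These are the $p\equiv7\pmod 8$ entries of the $\pm4/5$ rows, and suitable reductions of them also cover $-3/5$ classes such as $7,23\pmod{40}$. I will have to check that the $-3/5$ classes with $p\equiv7\pmod8$ are compared instead with $r_3$, or with $r_{135}$ projected so as to avoid the bad classes $71,119\pmod{120}$. Whichever comparison form the table assigns, the corresponding lemma applies.

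In every case the right-hand side of the congruence is odd. Therefore $a_r(p)/2^{e_r}\not\equiv0\pmod\lambda$, and in particular $a_r(p)\neq0$. Theorem~\ref{thm:tunnell-four-angles}, which uses Kolyvagin and Proposition~\ref{prop:rank-criterion-four-cases}, then shows that $p$ is not $\theta$-congruent.

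The equivalent descriptions for $\pm3/5$ follow from the Chinese remainder theorem. For example, $11,19,21,29\pmod{40}$ are exactly the classes with $p\equiv\pm1\pmod5$ and $p\equiv3,5\pmod8$.

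The main obstacle is the matching step. One must verify, class by class, that the type selected for $D_0=\eta_p p$ lands in a row whose $C_r$ contains $p\bmod M_r$, and whose comparison form and exponent $d_r$ match one of the two parity lemmas. The heavy computation, namely the Sturm verification, is already packaged in Proposition~\ref{prop:sturm-certified-prime-congruences}.
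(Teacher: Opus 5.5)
Your proposal is correct and follows the paper's proof. The row-selection rule does land in the rows of Tables~\ref{tab:sturm-comparison-three} and~\ref{tab:sturm-comparison-four} (a matching the paper leaves implicit), Proposition~\ref{prop:sturm-certified-prime-congruences} together with the parity lemmas gives $a_r(p)/2^{e_r}\equiv1\pmod\lambda$, and Theorem~\ref{thm:tunnell-four-angles} then applies. The point you left open is immediate: the $-3/5$ classes $\{7,23\}\pmod{40}$ are compared with $x^2+3y^2+5z^2$ (comparison with $r_3$ would be useless there, since $r_3(p)=0$ for $p\equiv7\pmod8$), and primes in these classes lie in $7,23,47,103\pmod{120}$ because the other lifts $63,87$ are multiples of $3$, so they avoid the even classes $71\equiv31$ and $119\equiv39\pmod{40}$.
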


\begin{proof}
For each listed prime class, choose the corresponding row of
Table~\ref{tab:sturm-comparison-three} or
Table~\ref{tab:sturm-comparison-four}.
Lemma~\ref{lem:three-square-prime-parity} or
Lemma~\ref{lem:135-prime-parity}, together with
Proposition~\ref{prop:sturm-certified-prime-congruences}, gives
\(a_r(p)/2^{e_r}\equiv1\pmod\lambda\).
Thus \(a_r(p)\neq0\), and
Theorem~\ref{thm:tunnell-four-angles} proves the assertion.
\end{proof}

\

\paragraph{\bf Computation times.}
The two families were computed in separate fresh runs on an Apple M1
MacBook Air with \(8\) GB of memory, under macOS~15.6.1, using
SageMath~10.6, Python~3.12.5, and PARI/GP~2.17.1.
The recorded elapsed times, in seconds, were
\begin{center}
\small
\begin{tabular}{@{}ccrrr@{}}
\toprule
\(\cos\theta\) & Forms & Construction & Congruences & Total\\
\midrule
\(\pm3/5\) & \(16\) & \(14.44\) & \(157.04\) & \(174.85\)\\
\(\pm4/5\) & \(32\) & \(1373.01\) & \(6323.89\) & \(7705.22\)\\
\bottomrule
\end{tabular}
\end{center}
Construction is the sum of the recorded curve, Brandt-data, and
complete-series stages, including Sturm simplification and the export
check. Congruences is the sum of the four or eight comparisons through
\(B_{40}=23040\) or \(B_{120}=138240\); all passed. Total includes
Sage startup and output generation. These are single-run measurements,
not averages.

\paragraph{\bf Computational assistance.}
The computations used throughout this paper, including the Brandt module
and lattice calculations, the construction and simplification of the
generalized theta series \(G_{f,\gamma}\), and the coefficient and Sturm
bound computations, were implemented with assistance from OpenAI's
GPT-6 Pro. ChatGPT also assisted with language editing and improving the clarity of the exposition.
The authors reviewed and independently verified the mathematical
arguments, the code used for the reported computations, and the
resulting data. The source code, input data, and coefficient files are
available from the authors upon request.

\appendix

\section{Levels, characters, and admissible indices}
\label{app:theta-metadata}

The notation and table conventions are given in
Section~\ref{sec:explicit-series}.  In Tables~\ref{tab:metadata-three}
and~\ref{tab:metadata-four}, \(m\) is positive, odd, and square-free,
and \(D=\eta_m m\), with \(\eta_m=(-1)^{(m-1)/2}\).
The admissible residue classes are determined by
\eqref{eq:type-definition}; they identify the appropriate form, even
when its coefficient at \(m\) vanishes.  Only root number \(+1\) types
are listed.  The rule for choosing a block and a row is in
Section~\ref{sec:tunnell-criteria}.

\begin{table}[htbp]
\centering
\small
\setlength{\tabcolsep}{10pt}
\begin{tabular}{@{}cccc@{}}
\toprule
Family \(j\) & Base twist \(\mu\) & \(N_j^{(\mu)}\) & Ramified prime \(p_0\)\\
\midrule
\(3\) & \(1,-1\) & \(320\) & \(5\)\\
\(3\) & \(2\) & \(80\) & \(5\)\\
\(3\) & \(-2\) & \(40\) & \(5\)\\
\addlinespace
\(4\) & \(1,-1\) & \(480\) & \(3\)\\
\(4\) & \(2,-2\) & \(960\) & \(3\)\\
\bottomrule
\end{tabular}
\caption{Base twists and ramified primes used in the construction.}
\label{tab:base-twists}
\end{table}

\begingroup
\scriptsize
\setlength{\LTleft}{0pt}
\setlength{\LTright}{\fill}
\setlength{\LTcapwidth}{\textwidth}
\setlength{\tabcolsep}{3pt}
\renewcommand{\arraystretch}{1.02}
\begin{longtable}{@{}>{\raggedright\arraybackslash}p{0.24\textwidth}>{\centering\arraybackslash}p{0.075\textwidth}>{\centering\arraybackslash}p{0.165\textwidth}>{\centering\arraybackslash}p{0.105\textwidth}>{\raggedright\arraybackslash}p{0.325\textwidth}@{}}
\caption{Generalized theta series for \(\cos\theta=\pm 3/5\).}\label{tab:metadata-three}\\
\toprule
Form & \(l\) & Character & \(N_{3/2}\) & Admissible \(m\pmod{40}\)\\
\midrule
\endfirsthead
\multicolumn{5}{@{}l}{\textit{Table~\thetable\ (continued)}}\\
\toprule
Form & \(l\) & Character & \(N_{3/2}\) & Admissible \(m\pmod{40}\)\\
\midrule
\endhead
\midrule
\multicolumn{5}{r@{}}{\textit{Continued on the next page}}\\
\endfoot
\bottomrule
\endlastfoot
\multicolumn{5}{@{}l}{\textit{Target twist }\(d=m\)}\\*
\(G_{3;-,+}^{(1)}\) & \(-43\) & \(1\) & \(1280\) & \(\{5,21,29\}\)\\
\(G_{3;+,+}^{(1)}\) & \(-23\) & \(1\) & \(1280\) & \(\{1,9,25\}\)\\
\(G_{3;-,+}^{(-1)}\) & \(13\) & \(1\) & \(1280\) & \(\{11,19,35\}\)\\
\(G_{3;+,+}^{(-1)}\) & \(17\) & \(1\) & \(1280\) & \(\{15,31,39\}\)\\
\addlinespace[0.4em]
\multicolumn{5}{@{}l}{\textit{Target twist }\(d=2m\)}\\*
\(G_{3;-,+}^{(-2)}\) & \(13\) & \(\psi_2\psi_5\) & \(1600\) & \(\{11,19\}\)\\
\(G_{3;+,-}^{(-2)}\) & \(1\) & \(1\) & \(160\) & \(\{7,15,23\}\)\\
\(G_{3;-,+}^{(2)}\) & \(-43\) & \(\psi_2\psi_5\) & \(1600\) & \(\{21,29\}\)\\
\(G_{3;+,+}^{(2)}\) & \(-7\) & \(\psi_2\psi_5\) & \(1600\) & \(\{1,9\}\)\\
\addlinespace[0.4em]
\multicolumn{5}{@{}l}{\textit{Target twist }\(d=-m\)}\\*
\(G_{3;-,-}^{(1)}\) & \(29\) & \(\psi_2\psi_5\) & \(6400\) & \(\{3,27\}\)\\
\(G_{3;+,-}^{(1)}\) & \(1\) & \(\psi_2\psi_5\) & \(6400\) & \(\{7,23\}\)\\
\(G_{3;-,-}^{(-1)}\) & \(-11\) & \(\psi_2\psi_5\) & \(6400\) & \(\{13,37\}\)\\
\(G_{3;+,-}^{(-1)}\) & \(-71\) & \(\psi_2\psi_5\) & \(6400\) & \(\{17,33\}\)\\
\addlinespace[0.4em]
\multicolumn{5}{@{}l}{\textit{Target twist }\(d=-2m\)}\\*
\(G_{3;-,-}^{(-2)}\) & \(-11\) & \(1\) & \(320\) & \(\{5,13,37\}\)\\
\(G_{3;+,+}^{(-2)}\) & \(-7\) & \(\psi_2\psi_5\) & \(800\) & \(\{1,9\}\)\\
\(G_{3;-,-}^{(2)}\) & \(29\) & \(1\) & \(320\) & \(\{3,27,35\}\)\\
\(G_{3;+,-}^{(2)}\) & \(1\) & \(1\) & \(320\) & \(\{7,15,23\}\)\\
\end{longtable}
\endgroup

\begingroup
\scriptsize
\setlength{\LTleft}{0pt}
\setlength{\LTright}{\fill}
\setlength{\LTcapwidth}{\textwidth}
\setlength{\tabcolsep}{3pt}
\renewcommand{\arraystretch}{1.02}
\begin{longtable}{@{}>{\raggedright\arraybackslash}p{0.24\textwidth}>{\centering\arraybackslash}p{0.075\textwidth}>{\centering\arraybackslash}p{0.165\textwidth}>{\centering\arraybackslash}p{0.105\textwidth}>{\raggedright\arraybackslash}p{0.325\textwidth}@{}}
\caption{Generalized theta series for \(\cos\theta=\pm 4/5\).}\label{tab:metadata-four}\\
\toprule
Form & \(l\) & Character & \(N_{3/2}\) & Admissible \(m\pmod{120}\)\\
\midrule
\endfirsthead
\multicolumn{5}{@{}l}{\textit{Table~\thetable\ (continued)}}\\
\toprule
Form & \(l\) & Character & \(N_{3/2}\) & Admissible \(m\pmod{120}\)\\
\midrule
\endhead
\midrule
\multicolumn{5}{r@{}}{\textit{Continued on the next page}}\\
\endfoot
\bottomrule
\endlastfoot
\multicolumn{5}{@{}l}{\textit{Target twist }\(d=m\)}\\*
\(G_{4;-,-,-}^{(1)}\) & \(-83\) & \(\psi_2\psi_3\) & \(5760\) & \(\{5,53,77\}\)\\
\(G_{4;-,+,+}^{(1)}\) & \(-19\) & \(\psi_2\psi_5\) & \(9600\) & \(\{21,61,69,109\}\)\\
\(G_{4;+,-,+}^{(1)}\) & \(-71\) & \(\psi_3\psi_5\) & \(28800\) & \(\{41,89\}\)\\
\(G_{4;+,+,-}^{(1)}\) & \(-103\) & \(1\) & \(1920\) & \(\{25,33,57,73,97,105\}\)\\
\(G_{4;-,-,-}^{(-1)}\) & \(37\) & \(1\) & \(1920\) & \(\{3,27,43,67,75,115\}\)\\
\(G_{4;-,+,+}^{(-1)}\) & \(29\) & \(\psi_3\psi_5\) & \(28800\) & \(\{11,59\}\)\\
\(G_{4;+,-,+}^{(-1)}\) & \(1\) & \(\psi_2\psi_5\) & \(9600\) & \(\{31,39,79,111\}\)\\
\(G_{4;+,+,-}^{(-1)}\) & \(113\) & \(\psi_2\psi_3\) & \(5760\) & \(\{23,47,95\}\)\\
\addlinespace[0.4em]
\multicolumn{5}{@{}l}{\textit{Target twist }\(d=2m\)}\\*
\(G_{4;-,-,+}^{(-2)}\) & \(61\) & \(\psi_2\psi_3\) & \(11520\) & \(\{19,91,115\}\)\\
\(G_{4;-,+,-}^{(-2)}\) & \(53\) & \(\psi_2\psi_5\) & \(19200\) & \(\{3,27,83,107\}\)\\
\(G_{4;+,-,+}^{(-2)}\) & \(1\) & \(\psi_2\psi_3\) & \(11520\) & \(\{31,55,79\}\)\\
\(G_{4;+,+,-}^{(-2)}\) & \(17\) & \(\psi_2\psi_5\) & \(19200\) & \(\{23,47,63,87\}\)\\
\(G_{4;-,-,+}^{(2)}\) & \(-11\) & \(1\) & \(3840\) & \(\{5,21,29,45,69,101\}\)\\
\(G_{4;-,+,-}^{(2)}\) & \(-43\) & \(\psi_3\psi_5\) & \(57600\) & \(\{13,37\}\)\\
\(G_{4;+,-,+}^{(2)}\) & \(-191\) & \(1\) & \(3840\) & \(\{9,41,65,81,89,105\}\)\\
\(G_{4;+,+,-}^{(2)}\) & \(-103\) & \(\psi_3\psi_5\) & \(57600\) & \(\{73,97\}\)\\
\addlinespace[0.4em]
\multicolumn{5}{@{}l}{\textit{Target twist }\(d=-m\)}\\*
\(G_{4;-,-,+}^{(1)}\) & \(61\) & \(\psi_3\psi_5\) & \(28800\) & \(\{19,91\}\)\\
\(G_{4;-,+,-}^{(1)}\) & \(53\) & \(1\) & \(1920\) & \(\{3,27,35,75,83,107\}\)\\
\(G_{4;+,-,-}^{(1)}\) & \(73\) & \(\psi_2\psi_3\) & \(5760\) & \(\{7,55,103\}\)\\
\(G_{4;+,+,+}^{(1)}\) & \(41\) & \(\psi_2\psi_5\) & \(9600\) & \(\{39,71,111,119\}\)\\
\(G_{4;-,-,+}^{(-1)}\) & \(-11\) & \(\psi_2\psi_5\) & \(9600\) & \(\{21,29,69,101\}\)\\
\(G_{4;-,+,-}^{(-1)}\) & \(-43\) & \(\psi_2\psi_3\) & \(5760\) & \(\{13,37,85\}\)\\
\(G_{4;+,-,-}^{(-1)}\) & \(-23\) & \(1\) & \(1920\) & \(\{17,33,57,65,105,113\}\)\\
\(G_{4;+,+,+}^{(-1)}\) & \(-31\) & \(\psi_3\psi_5\) & \(28800\) & \(\{1,49\}\)\\
\addlinespace[0.4em]
\multicolumn{5}{@{}l}{\textit{Target twist }\(d=-2m\)}\\*
\(G_{4;-,-,-}^{(-2)}\) & \(-83\) & \(\psi_3\psi_5\) & \(57600\) & \(\{53,77\}\)\\
\(G_{4;-,+,+}^{(-2)}\) & \(-19\) & \(1\) & \(3840\) & \(\{21,45,61,69,85,109\}\)\\
\(G_{4;+,-,-}^{(-2)}\) & \(-23\) & \(\psi_3\psi_5\) & \(57600\) & \(\{17,113\}\)\\
\(G_{4;+,+,+}^{(-2)}\) & \(-31\) & \(1\) & \(3840\) & \(\{1,9,25,49,81,105\}\)\\
\(G_{4;-,-,-}^{(2)}\) & \(13\) & \(\psi_2\psi_5\) & \(19200\) & \(\{3,27,43,67\}\)\\
\(G_{4;-,+,+}^{(2)}\) & \(29\) & \(\psi_2\psi_3\) & \(11520\) & \(\{11,35,59\}\)\\
\(G_{4;+,-,-}^{(2)}\) & \(73\) & \(\psi_2\psi_5\) & \(19200\) & \(\{7,63,87,103\}\)\\
\(G_{4;+,+,+}^{(2)}\) & \(449\) & \(\psi_2\psi_3\) & \(11520\) & \(\{71,95,119\}\)\\
\end{longtable}
\endgroup

\section{Weighted representation formulas}
\label{app:weighted-formulas}

Each block below gives the summands of one form, with
\(\ell=|l|\) taken from the corresponding metadata row.
The \(Q\) column lists its coefficients in two rows, as defined in
Section~\ref{sec:explicit-series}.  The first-kind column likewise
lists \(\boldsymbol w\) above \(\boldsymbol d=C^{-1}c_0\).
The vector \(c_0\) is recorded under the form label, once per block;
the case \(l=1\) has no such vector.
The columns headed \(\boldsymbol u_p\) specify
\(\psi_p(\boldsymbol u_p\cdot v)\), and \(1\) means an absent factor.
Thus every summand is determined by the printed data, including the
case in which the first-kind linear form vanishes.

\begingroup
\footnotesize
\renewcommand{\qfdata}[2]{\ensuremath{\left(\begin{smallmatrix}#1\\#2\end{smallmatrix}\right)}}
\renewcommand{\fkdata}[2]{\ensuremath{\left(\begin{smallmatrix}#1\\#2\end{smallmatrix}\right)}}
\setlength{\LTleft}{0pt}
\setlength{\LTright}{\fill}
\setlength{\LTcapwidth}{\textwidth}
\setlength{\tabcolsep}{1.7pt}
\setlength{\arraycolsep}{3pt}
\renewcommand{\arraystretch}{1.02}
\begin{longtable}{@{}>{\raggedright\arraybackslash}p{0.235\textwidth}>{\centering\arraybackslash}p{0.045\textwidth}>{\centering\arraybackslash}p{0.185\textwidth}>{\centering\arraybackslash}p{0.195\textwidth}>{\centering\arraybackslash}p{0.115\textwidth}>{\raggedright\arraybackslash}p{0.115\textwidth}@{}}
\caption{Weighted representation sums for \(\cos\theta=\pm 3/5\).}\label{tab:three-fifths-summands}\\
\toprule
Form & \(c\) & \(Q\) & \shortstack{First-kind data\\\(\boldsymbol w\) above \(\boldsymbol d\)} & \(\boldsymbol u_2\) & \(\boldsymbol u_5\)\\
\midrule
\endfirsthead
\multicolumn{6}{@{}l}{\textit{Table~\thetable\ (continued)}}\\
\toprule
Form & \(c\) & \(Q\) & \shortstack{First-kind data\\\(\boldsymbol w\) above \(\boldsymbol d\)} & \(\boldsymbol u_2\) & \(\boldsymbol u_5\)\\
\midrule
\endhead
\midrule
\multicolumn{6}{r@{}}{\textit{Continued on the next page}}\\
\endfoot
\bottomrule
\endlastfoot
\(G_{3;-,+}^{(1)}\)\newline \(c_0=(28,4,1)^t\) & \(2\) & \qfdata{15&87&343}{10&10&-82} & \fkdata{10&5&0}{22&42&22} & \vct{1,3,3} & 1\\*[2pt]
 & \(2\) & \qfdata{28&47&327}{-12&-28&6} & \fkdata{26&17&6}{5&20&22} & \vct{2,3,3} & 1\\*[2pt]
 & \(-6\) & \qfdata{28&95&183}{20&4&-90} & \fkdata{17&16&31}{23&42&42} & \vct{2,1,1} & 1\\*[2pt]
 & \(-4\) & \qfdata{60&87&92}{20&-40&36} & \fkdata{10&8&3}{36&22&22} & \vct{2,1,2} & 1\\*[2pt]
 & \(-2\) & \qfdata{23&60&335}{-20&10&-60} & \fkdata{17&26&6}{22&7&22} & \vct{3,2,1} & 1\\
\addlinespace[0.3em]
\(G_{3;+,+}^{(1)}\)\newline \(c_0=(12,1,1)^t\) & \(6\) & \qfdata{15&87&343}{10&10&-82} & \fkdata{5&7&20}{15&1&12} & \vct{1,3,3} & 1\\*[2pt]
 & \(2\) & \qfdata{28&47&327}{-12&-28&6} & \fkdata{15&8&2}{15&12&12} & \vct{2,3,3} & 1\\*[2pt]
 & \(-4\) & \qfdata{28&95&183}{20&4&-90} & \fkdata{16&12&18}{22&1&13} & \vct{2,1,1} & 1\\*[2pt]
 & \(-2\) & \qfdata{7&183&367}{2&-6&182} & \fkdata{0&22&9}{16&16&12} & \vct{3,1,1} & 1\\*[2pt]
 & \(-2\) & \qfdata{23&60&335}{-20&10&-60} & \fkdata{8&15&6}{21&12&11} & \vct{3,2,1} & 1\\
\addlinespace[0.3em]
\(G_{3;-,+}^{(-1)}\)\newline \(c_0=(8,7,1)^t\) & \(8\) & \qfdata{47&92&112}{44&24&-16} & \fkdata{3&7&10}{0&9&8} & 1 & 1\\*[2pt]
 & \(-8\) & \qfdata{23&112&167}{8&-2&-56} & \fkdata{1&11&10}{5&6&2} & 1 & 1\\
\addlinespace[0.3em]
\(G_{3;+,+}^{(-1)}\)\newline \(c_0=(9,10,1)^t\) & \(8\) & \qfdata{47&92&112}{-44&24&16} & \fkdata{14&16&3}{2&9&5} & 1 & 1\\*[2pt]
 & \(8\) & \qfdata{47&55&167}{10&26&30} & \fkdata{2&4&5}{14&15&13} & 1 & 1\\
\addlinespace[0.3em]
\(G_{3;-,+}^{(-2)}\)\newline \(c_0=(10,3,1)^t\) & \(4\) & \qfdata{15&23&23}{-10&-10&14} & \fkdata{7&1&0}{1&6&8} & \vct{1,1,1} & \vct{0,2,3}\\
\addlinespace[0.3em]
\(G_{3;+,-}^{(-2)}\) & \(2\) & \qfdata{15&23&23}{-10&-10&14} & 1 & 1 & 1\\*[2pt]
 & \(-2\) & \qfdata{7&23&47}{-2&6&22} & 1 & 1 & 1\\
\addlinespace[0.3em]
\(G_{3;-,+}^{(2)}\)\newline \(c_0=(40,3,1)^t\) & \(4\) & \qfdata{15&23&87}{-10&10&18} & \fkdata{14&31&1}{25&22&0} & 1 & \vct{0,2,1}\\*[2pt]
 & \(-4\) & \qfdata{23&28&47}{4&22&-12} & \fkdata{12&0&42}{21&31&37} & 1 & \vct{2,3,4}\\
\addlinespace[0.3em]
\(G_{3;+,+}^{(2)}\)\newline \(c_0=(4,0,1)^t\) & \(2\) & \qfdata{15&23&87}{-10&10&18} & \fkdata{3&4&0}{4&4&0} & 1 & \vct{0,2,1}\\*[2pt]
 & \(-4\) & \qfdata{23&28&47}{-4&-22&-12} & \fkdata{3&0&2}{0&1&0} & 1 & \vct{2,2,1}\\*[2pt]
 & \(2\) & \qfdata{7&47&92}{6&4&-44} & \fkdata{4&4&5}{4&4&2} & 1 & \vct{1,4,1}\\
\addlinespace[0.3em]
\(G_{3;-,-}^{(1)}\)\newline \(c_0=(16,0,1)^t\) & \(2\) & \qfdata{15&87&343}{10&10&-82} & \fkdata{19&12&19}{14&0&15} & \vct{1,3,3} & \vct{0,1,2}\\*[2pt]
 & \(2\) & \qfdata{28&47&327}{12&28&6} & \fkdata{13&25&22}{3&27&15} & \vct{2,3,3} & \vct{3,1,4}\\*[2pt]
 & \(-6\) & \qfdata{28&95&183}{20&4&-90} & \fkdata{6&12&19}{15&0&9} & \vct{2,1,1} & \vct{3,0,2}\\*[2pt]
 & \(-4\) & \qfdata{60&87&92}{-20&40&36} & \fkdata{16&4&16}{9&8&18} & \vct{2,3,2} & \vct{0,4,1}\\*[2pt]
 & \(-2\) & \qfdata{23&60&335}{20&10&60} & \fkdata{19&2&0}{24&4&14} & \vct{1,2,3} & \vct{2,0,0}\\
\addlinespace[0.3em]
\(G_{3;+,-}^{(1)}\) & \(6\) & \qfdata{15&87&343}{10&10&-82} & 1 & \vct{1,3,3} & \vct{0,1,2}\\*[2pt]
 & \(2\) & \qfdata{28&47&327}{-12&-28&6} & 1 & \vct{2,3,3} & \vct{2,1,4}\\*[2pt]
 & \(-4\) & \qfdata{28&95&183}{20&4&-90} & 1 & \vct{2,1,1} & \vct{3,0,2}\\*[2pt]
 & \(-2\) & \qfdata{7&183&367}{2&-6&182} & 1 & \vct{3,1,1} & \vct{1,3,1}\\*[2pt]
 & \(-2\) & \qfdata{23&60&335}{-20&10&-60} & 1 & \vct{3,2,1} & \vct{3,0,0}\\
\addlinespace[0.3em]
\(G_{3;-,-}^{(-1)}\)\newline \(c_0=(7,8,1)^t\) & \(8\) & \qfdata{47&92&112}{44&24&-16} & \fkdata{5&3&0}{6&1&1} & 1 & \vct{4,4,4}\\*[2pt]
 & \(-8\) & \qfdata{23&112&167}{8&-2&-56} & \fkdata{4&5&1}{2&4&5} & 1 & \vct{2,1,1}\\
\addlinespace[0.3em]
\(G_{3;+,-}^{(-1)}\)\newline \(c_0=(53,3,1)^t\) & \(8\) & \qfdata{47&92&112}{44&24&-16} & \fkdata{29&1&42}{46&38&62} & 1 & \vct{4,4,4}\\*[2pt]
 & \(8\) & \qfdata{47&55&167}{10&26&30} & \fkdata{42&48&14}{50&47&54} & 1 & \vct{1,0,4}\\
\addlinespace[0.3em]
\(G_{3;-,-}^{(-2)}\)\newline \(c_0=(0,8,1)^t\) & \(4\) & \qfdata{15&23&23}{-10&-10&14} & \fkdata{1&8&5}{5&1&4} & \vct{1,1,1} & 1\\
\addlinespace[0.3em]
\(G_{3;+,+}^{(-2)}\)\newline \(c_0=(5,1,1)^t\) & \(2\) & \qfdata{15&23&23}{-10&-10&14} & \fkdata{3&2&4}{4&0&4} & 1 & \vct{0,2,3}\\*[2pt]
 & \(-2\) & \qfdata{7&23&47}{2&-6&22} & \fkdata{1&5&6}{1&1&6} & 1 & \vct{4,2,4}\\
\addlinespace[0.3em]
\(G_{3;-,-}^{(2)}\)\newline \(c_0=(26,2,1)^t\) & \(8\) & \qfdata{15&23&87}{-10&10&18} & \fkdata{12&27&5}{17&15&0} & 1 & 1\\
\addlinespace[0.3em]
\(G_{3;+,-}^{(2)}\) & \(2\) & \qfdata{15&23&87}{-10&10&18} & 1 & 1 & 1\\*[2pt]
 & \(-4\) & \qfdata{23&28&47}{-4&-22&-12} & 1 & 1 & 1\\*[2pt]
 & \(2\) & \qfdata{7&47&92}{-6&4&44} & 1 & 1 & 1\\
\addlinespace[0.3em]
\end{longtable}
\endgroup

\begingroup
\footnotesize
\renewcommand{\qfdata}[2]{\ensuremath{\left(\begin{smallmatrix}#1\\#2\end{smallmatrix}\right)}}
\renewcommand{\fkdata}[2]{\ensuremath{\left(\begin{smallmatrix}#1\\#2\end{smallmatrix}\right)}}
\setlength{\LTleft}{0pt}
\setlength{\LTright}{\fill}
\setlength{\LTcapwidth}{\textwidth}
\setlength{\tabcolsep}{1.7pt}
\setlength{\arraycolsep}{3pt}
\renewcommand{\arraystretch}{1.02}
\begin{longtable}{@{}>{\raggedright\arraybackslash}p{0.205\textwidth}>{\centering\arraybackslash}p{0.04\textwidth}>{\centering\arraybackslash}p{0.175\textwidth}>{\centering\arraybackslash}p{0.18\textwidth}>{\centering\arraybackslash}p{0.105\textwidth}>{\centering\arraybackslash}p{0.105\textwidth}>{\raggedright\arraybackslash}p{0.105\textwidth}@{}}
\caption{Weighted representation sums for \(\cos\theta=\pm 4/5\).}\label{tab:four-fifths-summands}\\
\toprule
Form & \(c\) & \(Q\) & \shortstack{First-kind data\\\(\boldsymbol w\) above \(\boldsymbol d\)} & \(\boldsymbol u_2\) & \(\boldsymbol u_3\) & \(\boldsymbol u_5\)\\
\midrule
\endfirsthead
\multicolumn{7}{@{}l}{\textit{Table~\thetable\ (continued)}}\\
\toprule
Form & \(c\) & \(Q\) & \shortstack{First-kind data\\\(\boldsymbol w\) above \(\boldsymbol d\)} & \(\boldsymbol u_2\) & \(\boldsymbol u_3\) & \(\boldsymbol u_5\)\\
\midrule
\endhead
\midrule
\multicolumn{7}{r@{}}{\textit{Continued on the next page}}\\
\endfoot
\bottomrule
\endlastfoot
\(G_{4;-,-,-}^{(1)}\)\newline \(c_0=(76,6,1)^t\) & \(2\) & \qfdata{60&111&175}{-60&60&-30} & \fkdata{1&42&78}{60&16&80} & 1 & \vct{0,0,1} & 1\\*[2pt]
 & \(-4\) & \qfdata{55&151&151}{50&-50&82} & \fkdata{48&60&27}{81&22&10} & 1 & \vct{1,1,2} & 1\\*[2pt]
 & \(4\) & \qfdata{60&79&271}{-60&60&34} & \fkdata{74&58&36}{80&78&5} & 1 & \vct{0,2,1} & 1\\*[2pt]
 & \(-4\) & \qfdata{39&156&199}{-36&18&-156} & \fkdata{29&73&41}{13&42&80} & 1 & \vct{0,0,1} & 1\\*[2pt]
 & \(-2\) & \qfdata{15&256&256}{0&0&128} & \fkdata{60&31&4}{2&26&59} & 1 & \vct{0,2,2} & 1\\
\addlinespace[0.3em]
\(G_{4;-,+,+}^{(1)}\)\newline \(c_0=(9,12,1)^t\) & \(2\) & \qfdata{60&111&175}{-60&60&-30} & \fkdata{2&9&1}{6&5&0} & 1 & 1 & \vct{0,2,0}\\*[2pt]
 & \(-4\) & \qfdata{55&151&151}{50&-50&82} & \fkdata{6&16&5}{1&7&3} & 1 & 1 & \vct{0,3,3}\\*[2pt]
 & \(4\) & \qfdata{60&79&271}{-60&60&34} & \fkdata{14&7&15}{3&18&4} & 1 & 1 & \vct{0,4,2}\\*[2pt]
 & \(-4\) & \qfdata{39&156&199}{-36&18&-156} & \fkdata{8&2&18}{5&18&0} & 1 & 1 & \vct{1,3,1}\\*[2pt]
 & \(-2\) & \qfdata{15&256&256}{0&0&128} & \fkdata{9&18&13}{6&2&15} & 1 & 1 & \vct{0,3,2}\\
\addlinespace[0.3em]
\(G_{4;+,-,+}^{(1)}\)\newline \(c_0=(68,1,1)^t\) & \(16\) & \qfdata{60&111&175}{-60&60&-30} & \fkdata{16&36&63}{9&52&39} & \vct{2,1,3} & \vct{0,0,1} & \vct{0,2,0}\\
\addlinespace[0.3em]
\(G_{4;+,+,-}^{(1)}\)\newline \(c_0=(101,1,1)^t\) & \(2\) & \qfdata{60&111&175}{-60&60&-30} & \fkdata{49&4&48}{91&9&63} & \vct{2,3,1} & 1 & 1\\*[2pt]
 & \(-4\) & \qfdata{55&151&151}{50&-50&82} & \fkdata{96&60&73}{72&95&29} & \vct{3,1,3} & 1 & 1\\*[2pt]
 & \(6\) & \qfdata{60&79&271}{-60&60&34} & \fkdata{41&60&29}{97&77&9} & \vct{2,3,1} & 1 & 1\\*[2pt]
 & \(-4\) & \qfdata{39&156&199}{-36&18&-156} & \fkdata{98&97&78}{4&26&63} & \vct{3,2,1} & 1 & 1\\
\addlinespace[0.3em]
\(G_{4;-,-,-}^{(-1)}\)\newline \(c_0=(28,3,1)^t\) & \(8\) & \qfdata{31&124&255}{4&-30&60} & \fkdata{30&15&31}{35&13&4} & 1 & 1 & 1\\*[2pt]
 & \(8\) & \qfdata{79&111&124}{-66&-28&36} & \fkdata{12&3&19}{15&13&6} & 1 & 1 & 1\\
\addlinespace[0.3em]
\(G_{4;-,+,+}^{(-1)}\)\newline \(c_0=(23,1,1)^t\) & \(8\) & \qfdata{31&124&255}{4&-30&60} & \fkdata{12&20&4}{4&24&13} & 1 & \vct{2,1,0} & \vct{3,1,0}\\*[2pt]
 & \(8\) & \qfdata{79&111&124}{-66&-28&36} & \fkdata{14&13&19}{1&23&11} & 1 & \vct{1,0,1} & \vct{4,2,1}\\
\addlinespace[0.3em]
\(G_{4;+,-,+}^{(-1)}\) & \(8\) & \qfdata{31&124&255}{4&-30&60} & 1 & \vct{1,2,3} & 1 & \vct{3,1,0}\\*[2pt]
 & \(8\) & \qfdata{79&111&124}{-66&-28&36} & 1 & \vct{3,3,2} & 1 & \vct{4,2,1}\\
\addlinespace[0.3em]
\(G_{4;+,+,-}^{(-1)}\)\newline \(c_0=(82,8,1)^t\) & \(8\) & \qfdata{31&124&255}{4&-30&60} & \fkdata{92&67&70}{34&78&55} & \vct{1,2,3} & \vct{2,1,0} & 1\\*[2pt]
 & \(8\) & \qfdata{79&111&124}{-66&-28&36} & \fkdata{14&104&81}{98&2&7} & \vct{3,3,2} & \vct{1,0,1} & 1\\
\addlinespace[0.3em]
\(G_{4;-,-,+}^{(-2)}\)\newline \(c_0=(36,10,1)^t\) & \(16\) & \qfdata{31&255&496}{30&8&-120} & \fkdata{27&53&16}{49&52&31} & \vct{1,1,0} & \vct{2,0,2} & 1\\*[2pt]
 & \(-16\) & \qfdata{124&175&256}{-100&64&160} & \fkdata{28&39&4}{21&10&30} & \vct{2,1,0} & \vct{2,2,1} & 1\\*[2pt]
 & \(32\) & \qfdata{151&156&220}{-132&-100&120} & \fkdata{60&8&20}{37&20&3} & \vct{3,2,2} & \vct{2,0,2} & 1\\
\addlinespace[0.3em]
\(G_{4;-,+,-}^{(-2)}\)\newline \(c_0=(38,5,1)^t\) & \(16\) & \qfdata{31&255&496}{30&8&-120} & \fkdata{15&50&34}{17&20&27} & \vct{1,1,0} & 1 & \vct{2,0,3}\\*[2pt]
 & \(-8\) & \qfdata{124&175&256}{-100&64&160} & \fkdata{21&17&45}{16&38&30} & \vct{2,1,0} & 1 & \vct{1,0,3}\\*[2pt]
 & \(24\) & \qfdata{151&156&220}{-132&100&-120} & \fkdata{45&26&45}{46&32&5} & \vct{3,2,2} & 1 & \vct{2,3,0}\\*[2pt]
 & \(-8\) & \qfdata{124&151&256}{-116&-64&-32} & \fkdata{11&13&27}{50&19&49} & \vct{2,3,0} & 1 & \vct{4,2,3}\\*[2pt]
 & \(-8\) & \qfdata{156&159&199}{-84&-36&-138} & \fkdata{22&7&34}{34&31&48} & \vct{2,3,3} & 1 & \vct{3,4,1}\\
\addlinespace[0.3em]
\(G_{4;+,-,+}^{(-2)}\) & \(8\) & \qfdata{31&255&496}{30&8&-120} & 1 & \vct{1,1,0} & \vct{2,0,2} & 1\\*[2pt]
 & \(-8\) & \qfdata{124&175&256}{-100&64&160} & 1 & \vct{2,1,0} & \vct{2,2,1} & 1\\*[2pt]
 & \(-8\) & \qfdata{79&111&496}{-66&56&-72} & 1 & \vct{3,3,0} & \vct{2,0,2} & 1\\*[2pt]
 & \(8\) & \qfdata{151&156&220}{-132&100&-120} & 1 & \vct{3,2,2} & \vct{2,0,1} & 1\\*[2pt]
 & \(-16\) & \qfdata{39&199&496}{-18&24&-104} & 1 & \vct{1,1,0} & \vct{0,2,1} & 1\\*[2pt]
 & \(-8\) & \qfdata{124&151&256}{-116&-64&-32} & 1 & \vct{2,3,0} & \vct{2,1,2} & 1\\*[2pt]
 & \(8\) & \qfdata{55&151&496}{50&40&88} & 1 & \vct{3,1,0} & \vct{2,2,1} & 1\\
\addlinespace[0.3em]
\(G_{4;+,+,-}^{(-2)}\)\newline \(c_0=(0,14,1)^t\) & \(8\) & \qfdata{31&255&496}{30&8&-120} & \fkdata{8&5&5}{3&2&0} & \vct{1,1,0} & 1 & \vct{2,0,3}\\*[2pt]
 & \(-8\) & \qfdata{124&175&256}{-100&64&160} & \fkdata{2&10&13}{15&2&4} & \vct{2,1,0} & 1 & \vct{1,0,3}\\*[2pt]
 & \(-8\) & \qfdata{79&111&496}{-66&56&-72} & \fkdata{14&0&4}{10&10&16} & \vct{3,3,0} & 1 & \vct{1,3,2}\\*[2pt]
 & \(8\) & \qfdata{151&156&220}{-132&-100&120} & \fkdata{11&5&16}{0&2&10} & \vct{1,2,2} & 1 & \vct{3,2,0}\\*[2pt]
 & \(-16\) & \qfdata{39&199&496}{-18&-24&104} & \fkdata{9&6&16}{16&3&9} & \vct{3,3,0} & 1 & \vct{1,4,2}\\*[2pt]
 & \(-8\) & \qfdata{124&151&256}{-116&-64&-32} & \fkdata{6&9&12}{5&11&2} & \vct{2,3,0} & 1 & \vct{4,2,3}\\*[2pt]
 & \(8\) & \qfdata{55&151&496}{50&-40&-88} & \fkdata{0&9&1}{1&13&2} & \vct{3,1,0} & 1 & \vct{0,3,3}\\
\addlinespace[0.3em]
\(G_{4;-,-,+}^{(2)}\)\newline \(c_0=(9,9,1)^t\) & \(16\) & \qfdata{31&255&496}{30&8&-120} & \fkdata{2&4&5}{8&7&0} & \vct{1,1,0} & 1 & 1\\*[2pt]
 & \(8\) & \qfdata{124&175&256}{-100&64&160} & \fkdata{3&1&7}{0&9&5} & \vct{2,1,0} & 1 & 1\\*[2pt]
 & \(-8\) & \qfdata{79&111&496}{-66&56&-72} & \fkdata{8&10&5}{2&5&0} & \vct{3,3,0} & 1 & 1\\*[2pt]
 & \(-8\) & \qfdata{151&156&220}{-132&-100&120} & \fkdata{9&6&1}{6&4&10} & \vct{3,2,2} & 1 & 1\\*[2pt]
 & \(8\) & \qfdata{39&199&496}{-18&-24&104} & \fkdata{0&9&3}{10&10&3} & \vct{1,1,0} & 1 & 1\\*[2pt]
 & \(8\) & \qfdata{55&151&496}{50&40&88} & \fkdata{8&10&7}{6&8&10} & \vct{3,1,0} & 1 & 1\\*[2pt]
 & \(8\) & \qfdata{156&159&199}{-84&-36&-138} & \fkdata{0&8&9}{1&6&2} & \vct{2,3,3} & 1 & 1\\
\addlinespace[0.3em]
\(G_{4;-,+,-}^{(2)}\)\newline \(c_0=(37,2,1)^t\) & \(16\) & \qfdata{31&255&496}{30&8&-120} & \fkdata{7&27&23}{31&3&2} & \vct{1,1,0} & \vct{2,0,2} & \vct{2,0,3}\\*[2pt]
 & \(8\) & \qfdata{124&175&256}{-100&64&160} & \fkdata{8&13&30}{22&17&14} & \vct{2,1,0} & \vct{2,2,1} & \vct{1,0,3}\\*[2pt]
 & \(-8\) & \qfdata{79&111&496}{-66&56&-72} & \fkdata{19&9&4}{23&6&17} & \vct{3,3,0} & \vct{2,0,2} & \vct{1,3,2}\\*[2pt]
 & \(-8\) & \qfdata{151&156&220}{-132&100&-120} & \fkdata{12&31&42}{16&17&31} & \vct{1,2,2} & \vct{1,0,2} & \vct{3,2,0}\\*[2pt]
 & \(8\) & \qfdata{39&199&496}{18&-24&-104} & \fkdata{31&16&10}{15&5&10} & \vct{3,1,0} & \vct{0,2,1} & \vct{1,1,2}\\*[2pt]
 & \(8\) & \qfdata{55&151&496}{-50&40&-88} & \fkdata{25&0&24}{15&20&22} & \vct{1,1,0} & \vct{1,2,2} & \vct{0,3,3}\\*[2pt]
 & \(8\) & \qfdata{156&159&199}{-84&-36&-138} & \fkdata{41&33&28}{17&23&34} & \vct{2,3,3} & \vct{0,0,1} & \vct{3,4,1}\\
\addlinespace[0.3em]
\(G_{4;+,-,+}^{(2)}\)\newline \(c_0=(168,11,1)^t\) & \(32\) & \qfdata{31&255&496}{30&8&-120} & \fkdata{132&133&60}{49&68&98} & \vct{1,1,0} & 1 & 1\\*[2pt]
 & \(-32\) & \qfdata{151&156&220}{-132&-100&120} & \fkdata{21&125&141}{183&152&178} & \vct{3,2,2} & 1 & 1\\
\addlinespace[0.3em]
\(G_{4;+,+,-}^{(2)}\)\newline \(c_0=(99,2,1)^t\) & \(32\) & \qfdata{31&255&496}{30&8&-120} & \fkdata{85&65&1}{1&24&3} & \vct{1,1,0} & \vct{2,0,2} & \vct{2,0,3}\\*[2pt]
 & \(-32\) & \qfdata{151&156&220}{-132&100&-120} & \fkdata{86&29&28}{16&58&60} & \vct{3,2,2} & \vct{2,0,1} & \vct{2,3,0}\\
\addlinespace[0.3em]
\(G_{4;-,-,+}^{(1)}\)\newline \(c_0=(53,1,1)^t\) & \(2\) & \qfdata{60&111&175}{60&60&30} & \fkdata{12&47&43}{42&52&35} & 1 & \vct{0,0,1} & \vct{0,3,0}\\*[2pt]
 & \(-4\) & \qfdata{55&151&151}{50&50&-82} & \fkdata{33&15&7}{57&16&2} & 1 & \vct{1,1,1} & \vct{0,3,2}\\*[2pt]
 & \(4\) & \qfdata{60&79&271}{-60&60&34} & \fkdata{58&19&5}{1&5&6} & 1 & \vct{0,2,1} & \vct{0,4,2}\\*[2pt]
 & \(-4\) & \qfdata{39&156&199}{-36&18&-156} & \fkdata{4&32&9}{9&27&22} & 1 & \vct{0,0,1} & \vct{1,3,1}\\*[2pt]
 & \(-2\) & \qfdata{15&256&256}{0&0&128} & \fkdata{4&37&28}{53&42&11} & 1 & \vct{0,2,2} & \vct{0,3,2}\\
\addlinespace[0.3em]
\(G_{4;-,+,-}^{(1)}\)\newline \(c_0=(29,10,1)^t\) & \(2\) & \qfdata{60&111&175}{-60&60&-30} & \fkdata{35&23&15}{20&47&12} & 1 & 1 & 1\\*[2pt]
 & \(-4\) & \qfdata{55&151&151}{50&-50&82} & \fkdata{5&47&30}{33&35&28} & 1 & 1 & 1\\*[2pt]
 & \(4\) & \qfdata{60&79&271}{-60&60&34} & \fkdata{27&49&16}{16&52&39} & 1 & 1 & 1\\*[2pt]
 & \(-4\) & \qfdata{39&156&199}{-36&18&-156} & \fkdata{1&41&30}{8&13&12} & 1 & 1 & 1\\*[2pt]
 & \(-2\) & \qfdata{15&256&256}{0&0&128} & \fkdata{8&50&47}{25&26&38} & 1 & 1 & 1\\
\addlinespace[0.3em]
\(G_{4;+,-,-}^{(1)}\)\newline \(c_0=(38,13,1)^t\) & \(8\) & \qfdata{60&111&175}{-60&60&-30} & \fkdata{24&66&10}{62&62&26} & \vct{2,1,3} & \vct{0,0,1} & 1\\*[2pt]
 & \(-8\) & \qfdata{55&151&151}{-50&50&82} & \fkdata{68&52&70}{44&58&56} & \vct{1,1,3} & \vct{2,1,2} & 1\\
\addlinespace[0.3em]
\(G_{4;+,+,+}^{(1)}\)\newline \(c_0=(39,2,1)^t\) & \(16\) & \qfdata{60&111&175}{60&-60&-30} & \fkdata{4&15&19}{38&40&23} & \vct{2,1,3} & 1 & \vct{0,2,0}\\
\addlinespace[0.3em]
\(G_{4;-,-,+}^{(-1)}\)\newline \(c_0=(10,10,1)^t\) & \(8\) & \qfdata{31&124&255}{-4&-30&-60} & \fkdata{4&1&1}{9&4&4} & 1 & 1 & \vct{3,4,0}\\*[2pt]
 & \(8\) & \qfdata{79&111&124}{-66&-28&36} & \fkdata{7&9&4}{1&8&5} & 1 & 1 & \vct{4,2,1}\\
\addlinespace[0.3em]
\(G_{4;-,+,-}^{(-1)}\)\newline \(c_0=(40,1,1)^t\) & \(8\) & \qfdata{31&124&255}{-4&-30&-60} & \fkdata{34&38&5}{16&23&26} & 1 & \vct{2,2,0} & 1\\*[2pt]
 & \(8\) & \qfdata{79&111&124}{-66&-28&36} & \fkdata{20&21&21}{3&5&29} & 1 & \vct{1,0,1} & 1\\
\addlinespace[0.3em]
\(G_{4;+,-,-}^{(-1)}\)\newline \(c_0=(16,1,1)^t\) & \(8\) & \qfdata{31&124&255}{-4&-30&-60} & \fkdata{5&7&20}{0&22&13} & \vct{1,2,3} & 1 & 1\\*[2pt]
 & \(8\) & \qfdata{79&111&124}{-66&-28&36} & \fkdata{3&10&21}{1&2&0} & \vct{3,3,2} & 1 & 1\\
\addlinespace[0.3em]
\(G_{4;+,+,+}^{(-1)}\)\newline \(c_0=(17,6,1)^t\) & \(8\) & \qfdata{31&124&255}{4&-30&60} & \fkdata{9&10&19}{28&5&20} & \vct{1,2,3} & \vct{2,1,0} & \vct{3,1,0}\\*[2pt]
 & \(8\) & \qfdata{79&111&124}{-66&-28&36} & \fkdata{21&24&16}{14&1&15} & \vct{3,3,2} & \vct{1,0,1} & \vct{4,2,1}\\
\addlinespace[0.3em]
\(G_{4;-,-,-}^{(-2)}\)\newline \(c_0=(68,8,1)^t\) & \(16\) & \qfdata{31&255&496}{30&8&-120} & \fkdata{31&39&38}{21&48&45} & \vct{1,1,0} & \vct{2,0,2} & \vct{2,0,3}\\*[2pt]
 & \(-8\) & \qfdata{124&175&256}{-100&64&160} & \fkdata{77&11&67}{55&63&59} & \vct{2,1,0} & \vct{2,2,1} & \vct{1,0,3}\\*[2pt]
 & \(24\) & \qfdata{151&156&220}{-132&100&-120} & \fkdata{64&42&4}{35&41&47} & \vct{3,2,2} & \vct{2,0,1} & \vct{2,3,0}\\*[2pt]
 & \(-8\) & \qfdata{124&151&256}{-116&-64&-32} & \fkdata{54&74&43}{63&17&75} & \vct{2,3,0} & \vct{2,1,2} & \vct{4,2,3}\\*[2pt]
 & \(-8\) & \qfdata{156&159&199}{-84&-36&-138} & \fkdata{21&22&44}{47&29&14} & \vct{2,3,3} & \vct{0,0,1} & \vct{3,4,1}\\
\addlinespace[0.3em]
\(G_{4;-,+,+}^{(-2)}\)\newline \(c_0=(12,1,1)^t\) & \(16\) & \qfdata{31&255&496}{30&8&-120} & \fkdata{12&4&7}{2&5&10} & \vct{1,1,0} & 1 & 1\\*[2pt]
 & \(-8\) & \qfdata{124&175&256}{-100&64&160} & \fkdata{14&2&11}{16&1&14} & \vct{2,1,0} & 1 & 1\\*[2pt]
 & \(24\) & \qfdata{151&156&220}{-132&100&-120} & \fkdata{14&6&18}{17&3&9} & \vct{3,2,2} & 1 & 1\\*[2pt]
 & \(-8\) & \qfdata{124&151&256}{-116&-64&-32} & \fkdata{4&6&17}{4&5&4} & \vct{2,3,0} & 1 & 1\\*[2pt]
 & \(-8\) & \qfdata{156&159&199}{-84&-36&-138} & \fkdata{2&9&5}{6&16&3} & \vct{2,3,3} & 1 & 1\\
\addlinespace[0.3em]
\(G_{4;+,-,-}^{(-2)}\)\newline \(c_0=(11,15,1)^t\) & \(16\) & \qfdata{31&255&496}{30&8&-120} & \fkdata{2&11&12}{20&0&12} & \vct{1,1,0} & \vct{2,0,2} & \vct{2,0,3}\\*[2pt]
 & \(-8\) & \qfdata{79&111&496}{-66&56&-72} & \fkdata{15&13&19}{1&15&18} & \vct{3,3,0} & \vct{2,0,2} & \vct{1,3,2}\\*[2pt]
 & \(16\) & \qfdata{151&156&220}{-132&-100&120} & \fkdata{2&16&15}{3&7&9} & \vct{1,2,2} & \vct{1,0,1} & \vct{3,2,0}\\*[2pt]
 & \(-16\) & \qfdata{39&199&496}{-18&-24&104} & \fkdata{11&4&12}{5&19&14} & \vct{3,3,0} & \vct{0,1,1} & \vct{1,4,2}\\*[2pt]
 & \(-8\) & \qfdata{124&151&256}{-116&-64&-32} & \fkdata{5&8&14}{1&11&18} & \vct{2,3,0} & \vct{2,1,2} & \vct{4,2,3}\\
\addlinespace[0.3em]
\(G_{4;+,+,+}^{(-2)}\)\newline \(c_0=(23,1,1)^t\) & \(8\) & \qfdata{31&255&496}{30&8&-120} & \fkdata{0&30&8}{1&0&0} & \vct{1,1,0} & 1 & 1\\*[2pt]
 & \(-8\) & \qfdata{124&175&256}{-100&64&160} & \fkdata{13&18&14}{25&16&16} & \vct{2,1,0} & 1 & 1\\*[2pt]
 & \(-8\) & \qfdata{79&111&496}{-66&56&-72} & \fkdata{14&29&25}{13&30&10} & \vct{3,3,0} & 1 & 1\\*[2pt]
 & \(8\) & \qfdata{151&156&220}{-132&100&-120} & \fkdata{7&8&0}{22&4&13} & \vct{3,2,2} & 1 & 1\\*[2pt]
 & \(-16\) & \qfdata{39&199&496}{-18&24&-104} & \fkdata{24&8&24}{12&24&11} & \vct{1,1,0} & 1 & 1\\*[2pt]
 & \(-8\) & \qfdata{124&151&256}{-116&-64&-32} & \fkdata{29&23&12}{17&30&28} & \vct{2,3,0} & 1 & 1\\*[2pt]
 & \(8\) & \qfdata{55&151&496}{50&40&88} & \fkdata{16&26&29}{26&10&28} & \vct{3,1,0} & 1 & 1\\
\addlinespace[0.3em]
\(G_{4;-,-,-}^{(2)}\)\newline \(c_0=(7,0,1)^t\) & \(16\) & \qfdata{31&255&496}{30&8&-120} & \fkdata{4&1&4}{8&7&0} & \vct{1,1,0} & 1 & \vct{2,0,3}\\*[2pt]
 & \(8\) & \qfdata{124&175&256}{-100&64&160} & \fkdata{6&6&8}{10&1&8} & \vct{2,1,0} & 1 & \vct{1,0,3}\\*[2pt]
 & \(-8\) & \qfdata{79&111&496}{-66&56&-72} & \fkdata{0&3&2}{7&2&10} & \vct{3,3,0} & 1 & \vct{1,3,2}\\*[2pt]
 & \(-8\) & \qfdata{151&156&220}{-132&-100&120} & \fkdata{3&1&7}{11&0&12} & \vct{1,2,2} & 1 & \vct{3,2,0}\\*[2pt]
 & \(8\) & \qfdata{39&199&496}{-18&-24&104} & \fkdata{8&3&5}{12&3&5} & \vct{3,3,0} & 1 & \vct{1,4,2}\\*[2pt]
 & \(8\) & \qfdata{55&151&496}{50&-40&-88} & \fkdata{10&5&6}{1&11&0} & \vct{3,1,0} & 1 & \vct{0,3,3}\\*[2pt]
 & \(8\) & \qfdata{156&159&199}{-84&-36&-138} & \fkdata{1&4&6}{0&8&12} & \vct{2,3,3} & 1 & \vct{3,4,1}\\
\addlinespace[0.3em]
\(G_{4;-,+,+}^{(2)}\)\newline \(c_0=(17,2,1)^t\) & \(16\) & \qfdata{31&255&496}{30&8&-120} & \fkdata{27&21&13}{3&23&28} & \vct{1,1,0} & \vct{2,0,2} & 1\\*[2pt]
 & \(8\) & \qfdata{124&175&256}{-100&64&160} & \fkdata{2&3&18}{22&6&3} & \vct{2,1,0} & \vct{2,2,1} & 1\\*[2pt]
 & \(-8\) & \qfdata{79&111&496}{-66&56&-72} & \fkdata{28&12&6}{10&15&20} & \vct{3,3,0} & \vct{2,0,2} & 1\\*[2pt]
 & \(-8\) & \qfdata{151&156&220}{-132&100&-120} & \fkdata{6&1&17}{25&11&11} & \vct{3,2,2} & \vct{2,0,1} & 1\\*[2pt]
 & \(8\) & \qfdata{39&199&496}{-18&24&-104} & \fkdata{12&8&28}{20&11&9} & \vct{1,1,0} & \vct{0,2,1} & 1\\*[2pt]
 & \(8\) & \qfdata{55&151&496}{50&40&88} & \fkdata{7&27&9}{20&20&5} & \vct{3,1,0} & \vct{2,2,1} & 1\\*[2pt]
 & \(8\) & \qfdata{156&159&199}{-84&-36&-138} & \fkdata{2&27&20}{21&13&5} & \vct{2,3,3} & \vct{0,0,1} & 1\\
\addlinespace[0.3em]
\(G_{4;+,-,-}^{(2)}\)\newline \(c_0=(68,1,1)^t\) & \(32\) & \qfdata{31&255&496}{30&8&-120} & \fkdata{59&11&65}{12&23&38} & \vct{1,1,0} & 1 & \vct{2,0,3}\\*[2pt]
 & \(-32\) & \qfdata{151&156&220}{-132&-100&120} & \fkdata{16&16&46}{30&48&30} & \vct{3,2,2} & 1 & \vct{2,3,0}\\
\addlinespace[0.3em]
\(G_{4;+,+,+}^{(2)}\)\newline \(c_0=(402,22,1)^t\) & \(32\) & \qfdata{31&255&496}{30&8&-120} & \fkdata{265&0&47}{414&425&226} & \vct{1,1,0} & \vct{2,0,2} & 1\\*[2pt]
 & \(-32\) & \qfdata{151&156&220}{-132&100&-120} & \fkdata{355&385&338}{359&52&115} & \vct{3,2,2} & \vct{2,0,1} & 1\\
\addlinespace[0.3em]
\end{longtable}
\endgroup

\section{Fourier expansions and Sturm comparison data}
\label{app:fourier-data}

Tables~\ref{tab:qexp-three} and~\ref{tab:qexp-four} record initial
Fourier expansions in the normalizations of
Appendix~\ref{app:weighted-formulas}.
Tables~\ref{tab:sturm-comparison-three} and~\ref{tab:sturm-comparison-four}
give the comparisons used in Section~\ref{sec:prime-noncongruence}.

\begingroup
\scriptsize
\setlength{\LTleft}{0pt}
\setlength{\LTright}{\fill}
\setlength{\LTcapwidth}{\textwidth}
\setlength{\tabcolsep}{3pt}
\renewcommand{\arraystretch}{1.02}
\begin{longtable}{@{}>{\raggedright\arraybackslash}p{0.23\textwidth}>{\raggedright\arraybackslash}p{0.735\textwidth}@{}}
\caption{Initial Fourier expansions for \(\cos\theta=\pm 3/5\), through \(q^{150}\).}\label{tab:qexp-three}\\
\toprule
Form & Expansion\\
\midrule
\endfirsthead
\multicolumn{2}{@{}l}{\textit{Table~\thetable\ (continued)}}\\
\toprule
Form & Expansion\\
\midrule
\endhead
\midrule
\multicolumn{2}{r@{}}{\textit{Continued on the next page}}\\
\endfoot
\bottomrule
\endlastfoot
\(G_{3;-,+}^{(1)}\) & \(-4q^{5}\allowbreak -8q^{29}\allowbreak +4q^{45}\allowbreak +8q^{61}\allowbreak +16q^{69}\allowbreak +8q^{85}\allowbreak -8q^{101}\allowbreak +8q^{109}\allowbreak +4q^{125}\allowbreak -16q^{141}\allowbreak -24q^{149}\allowbreak +O(q^{151})\)\\
\(G_{3;+,+}^{(1)}\) & \(4q\allowbreak +4q^{9}\allowbreak -4q^{25}\allowbreak -12q^{49}\allowbreak +8q^{65}\allowbreak -12q^{81}\allowbreak -12q^{121}\allowbreak +16q^{129}\allowbreak -8q^{145}\allowbreak +O(q^{151})\)\\
\(G_{3;-,+}^{(-1)}\) & \(-16q^{11}\allowbreak -16q^{19}\allowbreak +16q^{35}\allowbreak -32q^{51}\allowbreak +16q^{59}\allowbreak -32q^{91}\allowbreak +16q^{99}\allowbreak +16q^{115}\allowbreak -16q^{131}\allowbreak +16q^{139}\allowbreak +O(q^{151})\)\\
\(G_{3;+,+}^{(-1)}\) & \(-16q^{15}\allowbreak -32q^{39}\allowbreak -16q^{55}\allowbreak +32q^{71}\allowbreak -16q^{95}\allowbreak +32q^{111}\allowbreak +O(q^{151})\)\\
\(G_{3;-,+}^{(-2)}\) & \(16iq^{11}\allowbreak -16q^{19}\allowbreak +32iq^{51}\allowbreak -16q^{59}\allowbreak +16q^{99}\allowbreak -16iq^{131}\allowbreak -16q^{139}\allowbreak +O(q^{151})\)\\
\(G_{3;+,-}^{(-2)}\) & \(-4q^{7}\allowbreak +4q^{15}\allowbreak +4q^{23}\allowbreak -4q^{47}\allowbreak -4q^{63}\allowbreak +8q^{87}\allowbreak -8q^{95}\allowbreak -4q^{103}\allowbreak +12q^{127}\allowbreak +8q^{143}\allowbreak +O(q^{151})\)\\
\(G_{3;-,+}^{(2)}\) & \(16iq^{21}\allowbreak -16q^{29}\allowbreak +16q^{69}\allowbreak +16iq^{101}\allowbreak -32q^{109}\allowbreak -16iq^{141}\allowbreak +32q^{149}\allowbreak +O(q^{151})\)\\
\(G_{3;+,+}^{(2)}\) & \(4q\allowbreak +4iq^{9}\allowbreak +16q^{41}\allowbreak +12iq^{49}\allowbreak +12q^{81}\allowbreak -20q^{121}\allowbreak -16iq^{129}\allowbreak +O(q^{151})\)\\
\(G_{3;-,-}^{(1)}\) & \(-8q^{3}\allowbreak +8q^{43}\allowbreak -8iq^{67}\allowbreak +24q^{83}\allowbreak +8iq^{107}\allowbreak -16q^{123}\allowbreak +24iq^{147}\allowbreak +O(q^{151})\)\\
\(G_{3;+,-}^{(1)}\) & \(4q^{7}\allowbreak -4iq^{23}\allowbreak -4q^{47}\allowbreak +4iq^{63}\allowbreak -16q^{87}\allowbreak +12iq^{103}\allowbreak +12q^{127}\allowbreak -8iq^{143}\allowbreak +O(q^{151})\)\\
\(G_{3;-,-}^{(-1)}\) & \(16iq^{13}\allowbreak +16iq^{53}\allowbreak -16q^{77}\allowbreak -16q^{117}\allowbreak +16iq^{133}\allowbreak +O(q^{151})\)\\
\(G_{3;+,-}^{(-1)}\) & \(-32q^{17}\allowbreak +32iq^{33}\allowbreak -32q^{57}\allowbreak +32iq^{73}\allowbreak +32q^{97}\allowbreak +O(q^{151})\)\\
\(G_{3;-,-}^{(-2)}\) & \(-8q^{5}\allowbreak +16q^{13}\allowbreak -16q^{37}\allowbreak +8q^{45}\allowbreak -16q^{53}\allowbreak +32q^{77}\allowbreak +16q^{85}\allowbreak -32q^{93}\allowbreak +16q^{117}\allowbreak -8q^{125}\allowbreak +O(q^{151})\)\\
\(G_{3;+,+}^{(-2)}\) & \(4q\allowbreak +4iq^{9}\allowbreak -16q^{41}\allowbreak -20iq^{49}\allowbreak +12q^{81}\allowbreak +32iq^{89}\allowbreak +12q^{121}\allowbreak -16iq^{129}\allowbreak +O(q^{151})\)\\
\(G_{3;-,-}^{(2)}\) & \(16q^{3}\allowbreak -16q^{35}\allowbreak -16q^{43}\allowbreak -16q^{67}\allowbreak +16q^{75}\allowbreak -48q^{83}\allowbreak +16q^{107}\allowbreak +48q^{115}\allowbreak +32q^{123}\allowbreak +48q^{147}\allowbreak +O(q^{151})\)\\
\(G_{3;+,-}^{(2)}\) & \(4q^{7}\allowbreak +4q^{15}\allowbreak -4q^{23}\allowbreak -12q^{47}\allowbreak -8q^{55}\allowbreak +4q^{63}\allowbreak +8q^{87}\allowbreak +4q^{103}\allowbreak +4q^{127}\allowbreak +8q^{143}\allowbreak +O(q^{151})\)\\
\end{longtable}
\endgroup

\begingroup
\scriptsize
\setlength{\LTleft}{0pt}
\setlength{\LTright}{\fill}
\setlength{\LTcapwidth}{\textwidth}
\setlength{\tabcolsep}{3pt}
\renewcommand{\arraystretch}{1.02}
\begin{longtable}{@{}>{\raggedright\arraybackslash}p{0.23\textwidth}>{\raggedright\arraybackslash}p{0.735\textwidth}@{}}
\caption{Initial Fourier expansions for \(\cos\theta=\pm 4/5\), through \(q^{150}\).}\label{tab:qexp-four}\\
\toprule
Form & Expansion\\
\midrule
\endfirsthead
\multicolumn{2}{@{}l}{\textit{Table~\thetable\ (continued)}}\\
\toprule
Form & Expansion\\
\midrule
\endhead
\midrule
\multicolumn{2}{r@{}}{\textit{Continued on the next page}}\\
\endfoot
\bottomrule
\endlastfoot
\(G_{4;-,-,-}^{(1)}\) & \(-8q^{5}\allowbreak +16q^{53}\allowbreak +8q^{125}\allowbreak +O(q^{151})\)\\
\(G_{4;-,+,+}^{(1)}\) & \(-16q^{21}\allowbreak -32q^{61}\allowbreak -16iq^{69}\allowbreak -32iq^{109}\allowbreak -16q^{141}\allowbreak +O(q^{151})\)\\
\(G_{4;+,-,+}^{(1)}\) & \(128iq^{41}\allowbreak -128q^{89}\allowbreak +O(q^{151})\)\\
\(G_{4;+,+,-}^{(1)}\) & \(-16q^{33}\allowbreak -16q^{57}\allowbreak +32q^{73}\allowbreak +32q^{97}\allowbreak -16q^{105}\allowbreak +32q^{145}\allowbreak +O(q^{151})\)\\
\(G_{4;-,-,-}^{(-1)}\) & \(16q^{3}\allowbreak +16q^{27}\allowbreak -32q^{43}\allowbreak -32q^{67}\allowbreak +16q^{75}\allowbreak -32q^{115}\allowbreak +32q^{123}\allowbreak +48q^{147}\allowbreak +O(q^{151})\)\\
\(G_{4;-,+,+}^{(-1)}\) & \(32q^{11}\allowbreak +32iq^{59}\allowbreak +32q^{131}\allowbreak +O(q^{151})\)\\
\(G_{4;+,-,+}^{(-1)}\) & \(-16iq^{31}\allowbreak +16q^{79}\allowbreak -16iq^{111}\allowbreak +O(q^{151})\)\\
\(G_{4;+,+,-}^{(-1)}\) & \(32q^{23}\allowbreak -32q^{47}\allowbreak -32q^{95}\allowbreak -64q^{143}\allowbreak +O(q^{151})\)\\
\(G_{4;-,-,+}^{(-2)}\) & \(32q^{19}\allowbreak -64q^{91}\allowbreak +32q^{115}\allowbreak -32q^{139}\allowbreak +O(q^{151})\)\\
\(G_{4;-,+,-}^{(-2)}\) & \(-32q^{3}\allowbreak +32iq^{27}\allowbreak -64q^{83}\allowbreak +64iq^{107}\allowbreak -64q^{123}\allowbreak +96iq^{147}\allowbreak +O(q^{151})\)\\
\(G_{4;+,-,+}^{(-2)}\) & \(-16q^{31}\allowbreak +16q^{55}\allowbreak -16q^{79}\allowbreak +O(q^{151})\)\\
\(G_{4;+,+,-}^{(-2)}\) & \(32iq^{23}\allowbreak +32q^{47}\allowbreak +64iq^{143}\allowbreak +O(q^{151})\)\\
\(G_{4;-,-,+}^{(2)}\) & \(16q^{5}\allowbreak +32q^{29}\allowbreak -16q^{45}\allowbreak -32q^{69}\allowbreak +32q^{101}\allowbreak -16q^{125}\allowbreak -32q^{141}\allowbreak -32q^{149}\allowbreak +O(q^{151})\)\\
\(G_{4;-,+,-}^{(2)}\) & \(32q^{13}\allowbreak -32iq^{37}\allowbreak +O(q^{151})\)\\
\(G_{4;+,-,+}^{(2)}\) & \(-64q^{65}\allowbreak -128q^{89}\allowbreak +64q^{105}\allowbreak +128q^{129}\allowbreak +O(q^{151})\)\\
\(G_{4;+,+,-}^{(2)}\) & \(-64q^{73}\allowbreak +64iq^{97}\allowbreak +O(q^{151})\)\\
\(G_{4;-,-,+}^{(1)}\) & \(-32q^{19}\allowbreak -64iq^{91}\allowbreak +32q^{139}\allowbreak +O(q^{151})\)\\
\(G_{4;-,+,-}^{(1)}\) & \(-8q^{3}\allowbreak +8q^{27}\allowbreak +16q^{35}\allowbreak -8q^{75}\allowbreak -16q^{83}\allowbreak -16q^{107}\allowbreak -16q^{123}\allowbreak +40q^{147}\allowbreak +O(q^{151})\)\\
\(G_{4;+,-,-}^{(1)}\) & \(-32q^{7}\allowbreak +32q^{55}\allowbreak +32q^{103}\allowbreak -32q^{127}\allowbreak +O(q^{151})\)\\
\(G_{4;+,+,+}^{(1)}\) & \(64q^{39}\allowbreak -128iq^{71}\allowbreak +O(q^{151})\)\\
\(G_{4;-,-,+}^{(-1)}\) & \(-16iq^{21}\allowbreak +32q^{29}\allowbreak -16q^{69}\allowbreak -32iq^{101}\allowbreak -48iq^{141}\allowbreak +64q^{149}\allowbreak +O(q^{151})\)\\
\(G_{4;-,+,-}^{(-1)}\) & \(-32q^{37}\allowbreak +32q^{85}\allowbreak +32q^{133}\allowbreak +O(q^{151})\)\\
\(G_{4;+,-,-}^{(-1)}\) & \(-16q^{33}\allowbreak +16q^{57}\allowbreak +32q^{65}\allowbreak -16q^{105}\allowbreak -32q^{113}\allowbreak -32q^{137}\allowbreak +O(q^{151})\)\\
\(G_{4;+,+,+}^{(-1)}\) & \(16iq\allowbreak +48q^{49}\allowbreak -16iq^{121}\allowbreak +O(q^{151})\)\\
\(G_{4;-,-,-}^{(-2)}\) & \(64q^{53}\allowbreak +O(q^{151})\)\\
\(G_{4;-,+,+}^{(-2)}\) & \(-32q^{61}\allowbreak -32q^{69}\allowbreak +32q^{85}\allowbreak +96q^{109}\allowbreak -32q^{141}\allowbreak +O(q^{151})\)\\
\(G_{4;+,-,-}^{(-2)}\) & \(32iq^{17}\allowbreak +96q^{113}\allowbreak -32iq^{137}\allowbreak +O(q^{151})\)\\
\(G_{4;+,+,+}^{(-2)}\) & \(16q\allowbreak +16q^{9}\allowbreak -16q^{25}\allowbreak -48q^{49}\allowbreak +16q^{81}\allowbreak -32q^{105}\allowbreak +16q^{121}\allowbreak +32q^{145}\allowbreak +O(q^{151})\)\\
\(G_{4;-,-,-}^{(2)}\) & \(16q^{3}\allowbreak +16iq^{27}\allowbreak -32q^{43}\allowbreak +32iq^{67}\allowbreak +32q^{123}\allowbreak +80iq^{147}\allowbreak +O(q^{151})\)\\
\(G_{4;-,+,+}^{(2)}\) & \(32q^{11}\allowbreak +32q^{35}\allowbreak -32q^{59}\allowbreak -32q^{131}\allowbreak +O(q^{151})\)\\
\(G_{4;+,-,-}^{(2)}\) & \(-64iq^{7}\allowbreak +64q^{63}\allowbreak -64q^{103}\allowbreak -192iq^{127}\allowbreak +O(q^{151})\)\\
\(G_{4;+,+,+}^{(2)}\) & \(-128q^{71}\allowbreak -128q^{95}\allowbreak +128q^{119}\allowbreak +O(q^{151})\)\\
\end{longtable}
\endgroup

\begin{table}[htbp]
\centering
\footnotesize
\begin{tabular}{ccccc}
\toprule
\(\cos\theta\) & residue set \(C_r\) mod \(40\) & Form \(G_r\)
& \(e_r\) & \((Q_r,d_r)\)\\
\midrule
\(3/5\) & \(\{11,19\}\) & \(G_{3;-,+}^{(-1)}\)
& \(4\) & \((x^2+y^2+z^2,3)\)\\
\(3/5\) & \(\{21,29\}\) & \(G_{3;-,+}^{(1)}\)
& \(3\) & \((x^2+y^2+z^2,3)\)\\
\(-3/5\) & \(\{3,27\}\) & \(G_{3;-,-}^{(1)}\)
& \(3\) & \((x^2+y^2+z^2,3)\)\\
\(-3/5\) & \(\{7,23\}\) & \(G_{3;+,-}^{(1)}\)
& \(2\) & \((x^2+3y^2+5z^2,2)\)\\
\bottomrule
\end{tabular}
\caption{Comparisons through the Sturm bound for the \(\pm3/5\) family.}
\label{tab:sturm-comparison-three}
\end{table}

\begin{table}[htbp]
\centering
\footnotesize
\begin{tabular}{ccccc}
\toprule
\(\cos\theta\) & residue set \(C_r\) mod \(120\) & Form \(G_r\)
& \(e_r\) & \((Q_r,d_r)\)\\
\midrule
\(4/5\) & \(\{53,77\}\) & \(G_{4;-,-,-}^{(1)}\)
& \(4\) & \((x^2+y^2+z^2,3)\)\\
\(4/5\) & \(\{43,67\}\) & \(G_{4;-,-,-}^{(-1)}\)
& \(5\) & \((x^2+y^2+z^2,3)\)\\
\(4/5\) & \(\{11,59\}\) & \(G_{4;-,+,+}^{(-1)}\)
& \(5\) & \((x^2+y^2+z^2,3)\)\\
\(4/5\) & \(\{31,79\}\) & \(G_{4;+,-,+}^{(-1)}\)
& \(4\) & \((x^2+3y^2+5z^2,2)\)\\
\(4/5\) & \(\{23,47\}\) & \(G_{4;+,+,-}^{(-1)}\)
& \(5\) & \((x^2+3y^2+5z^2,2)\)\\
\(-4/5\) & \(\{19,91\}\) & \(G_{4;-,-,+}^{(1)}\)
& \(5\) & \((x^2+y^2+z^2,3)\)\\
\(-4/5\) & \(\{83,107\}\) & \(G_{4;-,+,-}^{(1)}\)
& \(4\) & \((x^2+y^2+z^2,3)\)\\
\(-4/5\) & \(\{7,103\}\) & \(G_{4;+,-,-}^{(1)}\)
& \(5\) & \((x^2+3y^2+5z^2,2)\)\\
\bottomrule
\end{tabular}
\caption{Comparisons through the Sturm bound for the \(\pm4/5\) family.}
\label{tab:sturm-comparison-four}
\end{table}

\end{document}